\let\cl@chapter\undefined
\LetLtxMacro\orgvdots\vdots
\LetLtxMacro\orgddots\ddots
\DeclareRobustCommand\vdots{\mathpalette\@vdots{}}
\newcommand*{\@vdots}[2]{\sbox0{$#1\cdotp\cdotp\cdotp\m@th$}\sbox2{$#1.\m@th$}\vbox{\dimen@=\wd0 \advance\dimen@ -3\ht2 \kern.5\dimen@
\dimen@=\wd2 \advance\dimen@ -\ht2 \dimen2=\wd0 \advance\dimen2 -\dimen@
    \vbox to \dimen2{\offinterlineskip
      \copy2 \vfill\copy2 \vfill\copy2 }}}
\DeclareRobustCommand\ddots{\mathinner{\mathpalette\@ddots{}\mkern\thinmuskip
  }}
\newcommand*{\@ddots}[2]{\sbox0{$#1\cdotp\cdotp\cdotp\m@th$}\sbox2{$#1.\m@th$}\vbox{\dimen@=\wd0 \advance\dimen@ -3\ht2 \kern.5\dimen@
\dimen@=\wd2 \advance\dimen@ -\ht2 \dimen2=\wd0 \advance\dimen2 -\dimen@
    \vbox to \dimen2{\offinterlineskip
      \hbox{$#1\mathpunct{.}\m@th$}\vfill
      \hbox{$#1\mathpunct{\kern\wd2}\mathpunct{.}\m@th$}\vfill
      \hbox{$#1\mathpunct{\kern\wd2}\mathpunct{\kern\wd2}\mathpunct{.}\m@th$}}}}
\DeclareRobustCommand\bddots{\mathinner{\mathpalette\@bddots{}\mkern\thinmuskip
  }}
\newcommand*{\@bddots}[2]{\sbox0{$#1\cdotp\cdotp\cdotp\m@th$}\sbox2{$#1.\m@th$}\vbox{\dimen@=\wd0 \advance\dimen@ -3\ht2 \kern.5\dimen@
\dimen@=\wd2 \advance\dimen@ -\ht2 \dimen2=\wd0 \advance\dimen2 -\dimen@
    \vbox to \dimen2{\offinterlineskip
      \hbox{$#1\mathpunct{\kern\wd2}\mathpunct{\kern\wd2}\mathpunct{.}\m@th$}\vfill
      \hbox{$#1\mathpunct{\kern\wd2}\mathpunct{.}\m@th$}\vfill
      \hbox{$#1\mathpunct{.}\m@th$}}}}
\def\cbl{\color{blue}}
\definecolor{gold}{rgb}{0.85,0.65,0}
\newcommand{\floor}[1]{\ensuremath{\left\lfloor #1 \right\rfloor}}
\newcommand{\abs}[1]{\ensuremath{\left\lvert #1 \right\rvert}}
\newcommand{\by}{\times}
\newcommand{\norm}[1]{\ensuremath{\left\lVert #1 \right\rVert}}
\newcommand{\ip}[1]{\ensuremath{\left\langle #1 \right\rangle}}
\let\emptyset\varnothing
\newcommand{\set}[1]{\left\{#1\right\}}
\def\C{{\mathbb{C}}}
\def\H{{\mathbb{H}}}
\def\I{{\mathbb{I}}}
\def\N{{\mathbb{N}}}
\def\R{{\mathbb{R}}}
\def\S{{\mathbb{S}}}
\def\T{{\mathbb{T}}}
\def\cA{{\cal A}}
\def\cI{{\cal I}}
\def\cJ{{\cal J}}
\def\cL{{\cal L}}
\def\cS{{\cal S}}
\DeclareMathOperator{\Opt}{Opt}
\DeclareMathOperator{\rank}{rank}
\DeclareMathOperator{\Diag}{Diag}
\DeclareMathOperator{\diag}{diag}
\DeclareMathOperator{\range}{range}
\newenvironment{smallpmatrix}
    {\left(
    \begin{smallmatrix}}
    {\end{smallmatrix}
    \right)
    }
\DeclareMathOperator{\spann}{span}
\DeclareMathOperator{\conv}{conv}
\renewcommand{\Re}{\operatorname{Re}}
\renewcommand{\Im}{\operatorname{Im}}
\newcommand{\imag}{\mathrm{i}}
\spnewtheorem{assumption}{Assumption}{\bf}{\it}
\spnewtheorem{observation}{Observation}{\bf}{\it}
\crefname{assumption}{Assumption}{Assumptions}
\crefname{observation}{Observation}{Observations}
\newcommand{\mathprog}[1]{#1}
\DeclareMathOperator{\supp}{supp}
\begin{document}

\title{New notions of simultaneous diagonalizability of quadratic forms with applications to QCQPs}
\titlerunning{New notions of simultaneous diagonalizability}

\author{Alex L.\ Wang \and
        Rujun Jiang
}

\institute{F. Author \at
              first address \\
              Tel.: +123-45-678910\\
              Fax: +123-45-678910\\
              \email{fauthor@example.com}           
           \and
           S. Author \at
              second address
}

\institute{Alex L. Wang \at
Carnegie Mellon University, Pittsburgh, PA, USA\\
Present Address: Daniels School of Business, Purdue University, West Lafayette, IN, USA
\and
Rujun Jiang (Corresponding author)\at
School of Data Science, Fudan University, Shanghai, China\\
Shanghai Key Laboratory for Contemporary Applied Mathematics, Shanghai, China
}
\date{\today}


\maketitle

\begin{abstract}


A set of quadratic forms is simultaneously diagonalizable via congruence (SDC) if there exists a basis under which each of the quadratic forms is diagonal.
This property appears naturally when analyzing quadratically constrained quadratic programs (QCQPs) and has important implications in globally solving such problems using branch-and-bound methods.
This paper extends the reach of the SDC property by studying two new weaker notions of simultaneous diagonalizability. Specifically, we say that a set of quadratic forms is almost SDC (ASDC) if it is the limit of SDC sets and $d$-restricted SDC ($d$-RSDC) if it is the restriction of an SDC set in up to $d$-many additional dimensions.
In the context of QCQPs, these properties correspond to problems that may be diagonalized after arbitrarily small perturbations or after the introduction of $d$ additional variables.
Our main contributions are complete characterizations of the ASDC pairs and nonsingular triples of symmetric matrices, as well as a sufficient condition for the $1$-RSDC property for pairs of symmetric matrices.
Surprisingly, we show that \emph{every} singular symmetric pair is ASDC and that \emph{almost every} symmetric pair is $1$-RSDC.
We accompany our theoretical results with preliminary numerical experiments applying these constructions to solve QCQPs within branch-and-bound schemes.

\keywords{Almost simultaneously diagonalizable \and restricted simultaneously diagonalizable \and  simultaneous congruence \and quadratic forms \and quadratically constrained quadratic programming}
\end{abstract}


\section{Introduction}
This paper investigates two new notions of \textit{simultaneous diagonalizability} of quadratic forms and their applications in solving quadratically constrained quadratic programs (QCQPs).%

Let $\S^n$ denote the real vector space of $n\by n$ symmetric matrices.%
\footnote{While all of our results hold with only minor modifications over both $\C^n$ and Hermitian matrices and $\R^n$ and symmetric matrices, we will simplify our presentation in the main body by discussing only the real setting; see \cref{sec:hermitian_proofs} for a discussion of our results in the complex setting.}
Recall that a set of matrices $\cA\subseteq\S^n$ is said to be \textit{simultaneously diagonalizable via congruence} (SDC) if there exists an invertible $P\in\R^{n\by n}$ such that $P^\intercal AP$ is diagonal for every $A\in\cA$.
This property has attracted significant interest in the optimization community in recent years in the context of solving subclasses of QCQPs and their relaxations~\cite{jiang2016simultaneous,wang2021tightness,nguyen2020simultaneous,le2020equivalent,zhou2019simultaneous,zhou2020simultaneous,luo2020effective}.
Specifically, the SDC property corresponds to the ability to rewrite a given QCQP as a \emph{diagonal QCQP} (see \cref{subsec:motivation} below).
The SDC property also finds applications in areas such as signal processing, multivariate statistics, medical imaging analysis, and genetics; see \cite{bustamante2020solving,vollgraf2006quadratic} and references therein.

In this paper, we take a step towards increasing the practical importance of the SDC property in the context of globally solving QCQPs by investigating two weaker notions of simultaneous diagonalizability.
These weaker notions
formalize methods for diagonalizing classes of \textit{a priori} non-diagonalizable QCQPs.

\subsection{Motivation}
\label{subsec:motivation}
A general QCQP can be written as
\begin{align}
\label{eq:intro_qcqp}
\Opt\coloneqq \inf_{x\in\R^n}\set{x^\intercal A_1x + 2b_1^\intercal x + c_1:\, \begin{array}
	{l}
	x^\intercal A_i x + 2b_i^\intercal x + c_i \,\boxempty_i\, 0,\,\forall i\in[2,m]\\
	x\in\cL
\end{array}},
\end{align}
where for every $i\in[m]$, we have $A_i \in\S^n$, $b_i\in\R^n$, $c_i\in\R$, and $\boxempty_i\in\set{\leq,=}$; and $\cL\subseteq\R^n$ is a polyhedron. In words, the objective is to minimize a quadratic function subject to quadratic (in)equality constraints and linear (in)equality constraints.
QCQPs are highly expressive and capture numerous hard problems of both applied and theoretical interest; see~\cite{bao2011semidefinite,shor1990dual,wang2021tightness} and references therein. In fact, this class of problems is NP-hard even if $\cL=[-1,1]^n$ and there are no quadratic constraints (e.g., via max-cut).

We will refer to a QCQP in which the set of symmetric matrices $\cA = \set{A_1,\dots,A_m}$ is SDC as a \textit{diagonalizable QCQP}.
By definition, a diagonalizable QCQP can be rewritten as a \emph{diagonal QCQP} (one in which $\cA$ is a set of \emph{diagonal} matrices) upon a linear change of variables. Indeed, letting $y = P^{-1}x$ and $D_i = P^\intercal A_i P$ gives
\begin{align*}
\inf_{y\in\R^n}\set{y^\intercal D_1 y + 2(P^\intercal b_1)^\intercal y + c_1:\, \begin{array}
	{l}
	y^\intercal D_i y + 2(P^\intercal b_i)^\intercal y + c_i \,\boxempty_i\, 0,\,\forall i\in[2,m]\\
	y\in P^{-1}\cL
\end{array}}.
\end{align*}
While diagonal QCQPs are still NP-hard in general, they benefit from a number of advantages over more general QCQPs:
\begin{itemize}
	\item It is well known that the standard Shor semidefinite program (SDP) relaxation of a diagonal QCQP is equivalent to a second-order cone program (SOCP) \cite{wang2021tightness}. Consequently, the SDP relaxation can be solved substantially faster for diagonal QCQPs than for general QCQPs. Similar ideas have be used to build cheap but strong convex relaxations within branch and bound (BB) frameworks for nonconvex QCQPs \cite{zhou2019simultaneous,zhou2020simultaneous}.

	As we will see in \cref{sec:application}, when $P$ is well-conditioned, the computational savings of replacing an SDP with an SOCP within every node of a BB tree can outweigh the computational costs of preprocessing a diagonalizable QCQP into a diagonal QCQP.

\item Additionally, qualitative properties of the standard SDP relaxation are often easier to analyze in the context of diagonal QCQPs.
For example, a long line of work has investigated when the SDP relaxations of certain diagonal QCQPs are \textit{exact} (for various definitions of exact) and have given sufficient conditions for these properties~\cite{ben1996hidden,ben2014hidden,jiang2016simultaneous,burer2019exact,ho2017second,locatelli2016exactness,hsia2013trust,jeyakumar2014trust,wang2020generalized,blekherman2024aggregations}.
{For instance, Blekherman et al. \cite{blekherman2024aggregations} established in Theorem 2.12 that the convex hull of a set defined by a particular class of diagonal quadratic inequalities can be described using finitely many convex quadratic inequalities.
In general, such arguments often rely} on conditions (such as convexity\footnote{The convexity of the quadratic image is sometimes referred to as ``hidden convexity.''} or polyhedrality) of the quadratic image~\cite{polyak1998convexity} or the set of convex Lagrange multipliers~\cite{wang2021tightness}. In this context, the SDC property ensures that both of these sets are polyhedral.
While such conditions have been generalized beyond only diagonal or diagonalizable QCQPs, the sufficient conditions often become much more difficult to verify~\cite{wang2021tightness,wang2020geometric}.

As we will see in \cref{sec:application}, the SDP relaxation of a diagonal QCQP with bound constraints (as are encountered within BB schemes) admits low-rank solutions. Heuristically, {this \emph{may} suggest that the corresponding SDP relaxations should be strong---despite the fact that there is no formal connection between the rank of the solution and the quality of the relaxation.}
\end{itemize}


\subsection{Main contributions and outline}
In this paper, we define and analyze the \textit{almost SDC} (ASDC) and \textit{$d$-restricted SDC} ($d$-RSDC) properties; see \cref{sec:preliminaries,sec:rsdc} for precise definitions. Informally, $\cA\subseteq\S^n$ is ASDC if it is the limit of SDC sets and $d$-RSDC if it is the restriction of an SDC set in $\S^{n+d}$ to $\S^n$.
In the context of QCQPs, if the set $\cA=\set{A_1,\dots,A_m}$ is ASDC, then the QCQP can be diagonalized after arbitrarily small perturbations to the $A_i$ matrices. In a similar vein, if $\cA$ is $d$-RSDC, then the QCQP can be diagonalized after the introduction of $d$ additional ``dummy'' variables.

A summary of our contributions, along with an outline of the paper, follows:
\begin{itemize}
	\item We conclude this section in \cref{subsec:related_work} by reviewing related work on BB methods for QCQPs, the SDC property, and the almost simultaneously diagonalizable via similarity property.
	\item In \cref{sec:preliminaries}, we formally define the SDC and ASDC properties and review known characterizations of the SDC property. We additionally highlight a number of behaviors of the SDC property which will later contrast with those of the ASDC property.
	\item In \cref{sec:asdc_pairs}, we give a complete characterization of the ASDC property for pairs of symmetric matrices. In particular, \cref{thm:asdc_singular} states that \textit{every} singular\footnote{See \cref{def:singular}.} pair $\set{A,B}\subseteq\S^n$ is ASDC.
	The proof of this statement relies on the canonical form for pairs of symmetric matrices~\cite{uhlig1976canonical} under congruence transformations and the invertibility of a certain matrix related to the eigenvalues of an ``arrowhead'' matrix.
	\item In \cref{sec:asdc_triples}, we give a complete characterization of the ASDC property for \emph{nonsingular} triples of symmetric matrices.
	Our proof and constructions rely on facts about block matrices with Toeplitz upper triangular blocks. We review the relevant properties of such matrices in \cref{sec:upper_tri_toeplitz}.
	\item In \cref{sec:rsdc}, we formally define the $d$-RSDC property and highlight its relation to the ASDC property.
    {We will see that any pair of symmetric matrices $\set{A,B}\subset\S^n$ is naively $n$-RSDC.
    We strengthen this observation by showing} in \cref{thm:rsdc_pair} that the $1$-RSDC property holds for almost every pair of symmetric matrices.
    We also give a construction for the $d$-RSDC property for $d\geq 1$ and almost every pair of symmetric matrices. This second construction makes use of additional degrees of freedom and empirically leads to improved performance in the context of globally solving QCQPs (see \cref{sec:application}).

	\item In \cref{sec:obstructions}, we construct obstructions to \textit{a priori} plausible generalizations of our developments in \cref{sec:asdc_pairs,sec:asdc_triples,sec:rsdc}. \cref{subsec:sing_triples_obstruction} shows that, in contrast to \cref{thm:asdc_singular}, there exist singular triples of symmetric matrices which are \emph{not} ASDC. The same construction can be interpreted as a triple of symmetric matrices which is not $d$-RSDC for any $d<\floor{n/2}$; this contrasts with \cref{thm:rsdc_pair}. Next, \cref{subsec:nonsing_obstruction} shows that a natural generalization of our characterizations of the ASDC property for pairs and triples of symmetric matrices cannot hold for general $m$-tuples; specifically this natural generalization fails for $m \geq 7$.
	\item In \cref{sec:application}, we revisit one of the key motivations for studying the ASDC and $d$-RSDC properties---solving QCQPs more efficiently. In this context, we begin by deriving a number of theoretical results that give heuristic reasons why one would expect SOCP-based BB methods for diagonal QCQPs to outperform SDP-based BB methods for more general QCQPs. We then present a number of preliminary numerical experiments that corroborate this intuition.
    {While our $d$-RSDC based reformulations significantly outperform the direct SDP-based BB approach, it is slightly worse than the simpler $n$-RSDC based reformulation (see \cref{sec:application} for details). Nonetheless, we believe that diagonalization-based methods can be a useful tool in the development of more efficient methods for QCQPs and that the theory developed in this paper is a first step in this direction.}
\end{itemize}

\begin{remark}\label{rem:real_proofs}
In the main body of this paper, we will state and prove our results for only the real symmetric setting. Nevertheless, our results and proofs extend almost verbatim to the Hermitian setting by replacing the canonical form of a pair of real symmetric matrices (\cref{prop:canonical_form}) by the canonical form for a pair of Hermitian matrices (see \cite[Theorem 6.1]{lancaster2005canonical}).
As no new ideas or insights are required for handling the Hermitian setting, we defer formally stating our results in the Hermitian setting and discussing the necessary modifications to our proofs to \cref{sec:hermitian_proofs}.\mathprog{\qed}
\end{remark}


\subsection{Related work}
\label{subsec:related_work}
\paragraph{Branch-and-bound methods for QCQPs}
Most existing works for globally solving QCQPs are based on spatial BB methods.
\citet{audet2000branch} developed an LP-based branch and cut method for QCQPs using the reformulation-linearization technique (RLT) \cite{sherali2013reformulation}.
\citet{linderoth2005simplicial} proposed a triangle-based BB algorithm for solving nonconvex QCQPs, where two-dimensional triangles and rectangles are used to partition the feasible region.
Recently, \citet{zhou2020simultaneous} proposed a BB algorithm for QCQPs with nonconvex objective functions and convex quadratic constraints, based on the SDC property between the objective function and a specific aggregation of the convex quadratic constraints under a positive definiteness assumption.
\citet{luo2020effective} propose a BB algorithm based on the SDC property of two positive semidefinite matrices for solving a nonconvex QCQP arising from optimal portfolio deleveraging problems.
Please refer to \cite{chen2012globally,billionnet2016exact,chen2017spatial,lu2017eigenvalue,eltved2022strengthened} for other recent developments in globally solving nonconvex QCQPs.



\paragraph{The SDC property for sets of quadratic forms and SDC algorithms.}
The SDC property for a pair of symmetric matrices (more generally, Hermitian matrices) is well-understood and
follows from results due to \citet{weierstrass1868zur} and Kronecker (see \cite{kronecker1968collected}). We review these results in \cref{sec:preliminaries} (see also \cref{prop:canonical_form}).
More recently, there has been much interest in the optimization literature towards understanding the SDC property for general $m$-tuples of quadratic forms~\cite{jiang2016simultaneous,nguyen2020simultaneous,le2020equivalent}.
In fact, the search for ``sensible and ``palpable'' conditions'' for this property appeared as an open question on a short list of 14 open questions in nonlinear analysis and optimization~\cite{hiriart2007potpourri}.
In the real symmetric setting, \citet{jiang2016simultaneous} gave a complete characterization of this property under a semidefiniteness assumption. This result was then improved upon by \citet{nguyen2020simultaneous} who removed the semidefiniteness assumption.
\citet{le2020equivalent} additionally extend these characterizations to the case of Hermitian matrices.
\citet{bustamante2020solving} gave a complete characterization of the simultaneous diagonalizability of an $m$-tuple of \emph{symmetric complex} matrices under ${}^\intercal$-congruence.\footnote{We emphasize that \citet{bustamante2020solving} consider complex symmetric matrices and adopt ${}^\intercal$-congruence as their notion of congruence.}

We remark that this line of work is ``algorithmic'' and gives numerical procedures for deciding if a given set of quadratic forms is SDC. See \cite{le2020equivalent} and references therein.

\paragraph{The almost SDS property.}
An analogous theory for the \emph{almost} simultaneous diagonalizability of \emph{linear operators} has been studied in the literature.
In this setting, the congruence transformation is naturally replaced by a similarity transformation\footnote{Recall that two matrices $A,B\in\C^{n\by n}$ are similar if there exists an invertible $P\in\C^{n\by n}$ such that $A = P^{-1}BP$.} and the SDC property is replaced by simultaneous diagonalizability \emph{via similarity} (SDS).
A widely cited theorem due to \citet{motzkin1955pairs} shows that every pair of commuting linear operators, i.e., a pair of matrices in $\C^{n\by n}$, is almost SDS. This line of investigation was more recently picked up by \citet{omeara2006approximately} who showed that triples of commuting linear operators are almost SDS under a regularity assumption on the dimensions of eigenspaces associated with the linear operators.


\subsection{Notation}
\label{sub:notation}
Let $\N = \set{1,2,\dots}$ and $\N_0 = \set{0,1,\dots}$.
For $m,n\in\N_0$, let $[m,n]= \set{m,m+1,\dots,n}$ and $[n]= \set{1,\dots,n}$. By convention, if $m\geq n + 1$ (respectively, $n\leq 0$), then $[m,n] = \emptyset$ (respectively, $[n] = \emptyset$).
Given $x\in\R^n$, let $\supp(x)\coloneqq\set{i\in[n]:\, x_i\neq 0}$ denote the support of $x$.
Let $\abs{I}$ be the cardinality of a  set $I$.
For $n\in\N$, let $\S^n$ denote the vector space of $n\by n$ symmetric matrices.
For $v\in\R^n$ and $A\in\R^{n\by n}$ let $v^\intercal$ and $A^\intercal$ denote the transpose of $v$ and $A$ respectively.
Let $\spann(\cdot)$ and $\dim(\cdot)$
denote the span and dimensions
respectively.
For $n,m\in\N$, let $I_n$, $0_n$, and $0_{n\by m}$ denote the $n\by n$ identity matrix, $n\by n$ zero matrix, and $n\by m$ zero matrix respectively.
When $n\in\N$ is clear from context, let $e_i\in\R^n$ denote the $i$th standard basis vector.
Given a complex subspace $V\subseteq \R^n$ with dimension $k$, a surjective map $U:\R^k\to V$, and $A\in\S^n$, let $A|_V\in\S^k$ denote the restriction of $A$ to $V$, i.e., $A|_V = U^\intercal A U$.
When the map $U$ is inconsequential, we will omit specifying $U$.
For $\alpha_1,\dots,\alpha_k\in\R$, let $\Diag(\alpha_1,\dots,\alpha_k)\in\R^{k\by k}$ denote the diagonal matrix with $i$th entry $\alpha_i$. For $A_1,\dots,A_k$ square matrices, let $\Diag(A_1,\dots,A_k)$ denote the block diagonal matrix with $i$th block $A_i$.
Given $A\in \R^{n\by n}$ and $B\in\R^{m\by m}$, let
$A\oplus B\in \R^{(n+m)\by (n+m)}$ and $A\otimes B\in\R^{nm\by nm}$ denote the direct sum and Kronecker product of $A$ and $B$ respectively.
Given $A,B\in\R^{n\by n}$, let $[A,B]\coloneqq AB - BA$ denote the commutator of $A$ and $B$.
For $A\in\R^{n\by n}$, let $\norm{A}$ denote the spectral norm of $A$.
Given $\alpha\in\C$, let $\Re(\alpha)$, $\Im(\alpha)$, and $\alpha^*$ denote the real and imaginary parts and complex conjugate of $\alpha$ respectively.
For $A\in\C^{n\by n}$, let $A^*$ denote the conjugate transpose of $A$.
We will denote the imaginary unit by the symbol $\imag$ in order to distinguish it from the variable $i$, which will often be used as an index.

\section{Preliminaries} 
\label{sec:preliminaries}

In this section, we define our main objects of study and recall some useful results from the literature.

\begin{definition}
A set $\cA\subseteq\S^n$ is \emph{simultaneously diagonalizable via congruence} (SDC) if there exists an invertible $P\in\R^{n\by n}$ such that
$P^\intercal AP$ is diagonal for all $A\in\cA$.\mathprog{\qed}
\end{definition}

\begin{remark}\label{rem:congruence}
The SDC property is the natural notion for simultaneous diagonalization in the context of quadratic forms. Indeed, suppose $\cA\subseteq\S^n$ is SDC and let $P$ denote the corresponding invertible matrix. Then, performing the change of variables $y = P^{-1}x$, we have that $x^\intercal A x = y^\intercal(P^\intercal AP)y$ is separable in $y$ for every $A\in\cA$.\mathprog{\qed}
\end{remark}

\begin{observation}
\label{obs:sdc_span_subset}
The SDC property is closed under taking spans and subsets. In particular, $\cA\subseteq\S^n$ is SDC if and only if $\set{A_1,\dots,A_m}$ is SDC for some basis $\set{A_1,\dots,A_m}$ of $\spann(\cA)$.
\end{observation}

We begin by studying the following relaxation of the SDC property.

\begin{definition}
A set $\cA \subseteq\S^n$ is \emph{almost simultaneously diagonalizable via congruence} (ASDC) if there exist sequences $A_{i}\to A$ for every $A\in\cA$ such that for every $i\in\N$, the set $\set{A_i:\, A\in\cA }$ is SDC.\mathprog{\qed}
\end{definition}

\begin{observation}
\label{obs:asdc_span_subset}
The ASDC property is closed under taking spans and subsets. In particular, $\cA\subseteq\S^n$ is ASDC if and only if $\set{A_1,\dots,A_m}$ is ASDC for some basis $\set{A_1,\dots,A_m}$ of $\spann(\cA)$.
\end{observation}

When $\abs{\cA}$ is finite, we will use the following equivalent definition of ASDC.
\begin{observation}
\label{obs:asdc_finite}
A finite set $\set{A_1,\dots,A_m}\subseteq\S^n$ is ASDC if and only if for all $\epsilon>0$, there exist $\tilde A_1,\dots,\tilde A_m\in\S^n$ such that
\begin{itemize}
	\item for all $i\in[m]$, the spectral norm $\norm{A_i - \tilde A_i}\leq \epsilon$, and
	\item $\set{\tilde A_1,\dots,\tilde A_m}$ is SDC.
\end{itemize}
\end{observation}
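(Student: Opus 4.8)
The plan is to simply unpack the two definitions and observe that, because $\cA=\set{A_1,\dots,A_m}$ is finite, the sequence of approximating SDC sets indexed by $\N$ in the definition of ASDC can be traded for the single ``for all $\epsilon>0$'' quantifier in the observation: a finite family of sequences converges if and only if it converges ``uniformly in the index'', since one may take the maximum of finitely many convergence thresholds.

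For the forward direction I would assume $\cA$ is ASDC with witness $f:\cA\by\N\to\H^n$. Fixing $\epsilon>0$, for each $i\in[m]$ the convergence $\lim_{j\to\infty}f(A_i,j)=A_i$ yields an index $J_i$ with $\norm{f(A_i,j)-A_i}\le\epsilon$ for all $j\ge J_i$; taking $J:=\max_{i\in[m]}J_i<\infty$ and $\tilde A_i:=f(A_i,J)$ gives $\norm{A_i-\tilde A_i}\le\epsilon$ for all $i$, while $\set{\tilde A_i:i\in[m]}=\set{f(A,J):A\in\cA}$ is SDC by hypothesis.

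For the converse I would, for each $j\in\N$, invoke the $\epsilon$-condition with $\epsilon=1/j$ to obtain $\tilde A_1^{(j)},\dots,\tilde A_m^{(j)}\in\H^n$ with $\norm{A_i-\tilde A_i^{(j)}}\le 1/j$ and $\set{\tilde A_i^{(j)}:i\in[m]}$ SDC, and then define $f:\cA\by\N\to\H^n$ by fixing, for each $A\in\cA$, an index $i(A)\in[m]$ with $A_{i(A)}=A$ and setting $f(A,j):=\tilde A_{i(A)}^{(j)}$. Then $\norm{f(A,j)-A}\le 1/j\to 0$, so $\lim_{j\to\infty}f(A,j)=A$, and $\set{f(A,j):A\in\cA}\subseteq\set{\tilde A_i^{(j)}:i\in[m]}$ is SDC because subsets of SDC sets are SDC (\cref{obs:sdc_span_subset}); hence $\cA$ is ASDC.

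There is no substantive obstacle here; the proof is essentially definition chasing. The only point that merits a line of care is the bookkeeping in the converse direction when the matrices $A_1,\dots,A_m$ are not pairwise distinct, so that the witness $f$ must be well defined on the \emph{set} $\cA$ rather than on the index set $[m]$ --- this is exactly what the choice function $i(\cdot)$ and the closure of SDC under subsets take care of.
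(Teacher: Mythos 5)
Your proof is correct and is precisely the routine definition-chasing that the paper treats as immediate (it gives no proof for this Observation). The forward direction via a maximum of finitely many convergence thresholds, and the converse via $\epsilon = 1/j$ together with closure of SDC under subsets (\cref{obs:sdc_span_subset}) to handle possible repetitions among $A_1,\dots,A_m$, is exactly the intended argument.
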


We will additionally need the following two definitions.

\begin{definition}
\label{def:singular}
A set $\cA\subseteq\S^n$ is \emph{nonsingular} if there exists a nonsingular $A\in\spann(\cA)$. Else, it is \emph{singular}.\mathprog{\qed}
\end{definition}

\begin{definition}
Given a set $\cA\subseteq\S^n$, we will say that $S\in\cA$ is a \textit{max-rank element} of $\spann(\cA)$ if $\rank(S) = \max_{A\in\cA}\rank(A)$.\mathprog{\qed}
\end{definition}
{
\begin{remark}
Our procedures for determining SDC and ASDC will assume we have access to a max-rank element $S\in\cA$. This element is easy to produce algorithmically. In fact, we claim that a \emph{generic} $S\in\cA$ will be a max-rank element of $\cA$. To see this, let $k = \max_{A\in\cA}\rank(A)$ and let $\hat S$ be any max-rank element of $\cA$. Then, there must be some principal minor of $\hat S$ indexed by $\cI$ of size $\abs{\cI} = k$ such that $\det(\hat S_\cI) \neq 0$. Thus, $\det(S_\cI)$ is a nonzero polynomial on $\cA$ so that for generic $S\in\cA$ we have that $\det(S_{\cI})\neq 0$ and $\rank(S)= k$. Thus, with probability one, $\sum_{i=1}^m \alpha_i A_i$ is a max-rank element of $\cA$ if we sample $\alpha\in[0,1]^m$ uniformly.
\end{remark}
}




\subsection{Characterization of SDC}
A number of necessary and/or sufficient conditions for the SDC property have been given in the literature \cite{horn2012matrix,bustamante2020solving,lancaster2005canonical}. For our purposes, we will need the following two results. The first result gives a characterization of the SDC property for nonsingular sets of symmetric matrices and is well-known (see \cite[Theorem 4.5.17]{horn2012matrix}). The second result, due to \citet{bustamante2020solving}, gives a characterization of the SDC property for singular sets of symmetric matrices by reducing to the nonsingular case.
For completeness, we provide a short proof for each of these results in \cref{sec:proof_sdc_characterization}.

\begin{restatable}{proposition}{sdccharacterizationinvertible}\label{prop:sdc_characterization_invertible}
Let $\cA\subseteq\S^n$ and suppose $S\in\spann(\cA)$ is nonsingular. Then, $\cA$ is SDC if and only if $S^{-1}\cA$ is a commuting set of diagonalizable matrices with real eigenvalues.
\end{restatable}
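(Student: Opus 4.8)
The plan is to prove both directions using the change of variables induced by $S$, which sends the pencil $\cA$ to the pencil $S^{-1}\cA$ of not-necessarily-Hermitian matrices.

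First, for the ``only if'' direction: suppose $\cA$ is SDC via an invertible $P\in\C^{n\by n}$, so that $P^*AP = D_A$ is diagonal (hence real, since $A$ is Hermitian) for every $A\in\cA$. In particular $P^*SP = D_S$ is a real diagonal matrix, and since $S$ is nonsingular, $D_S$ is invertible. For any $A\in\cA$, I would write
\[
S^{-1}A = (P^{-*}D_S P^{-1})^{-1}(P^{-*}D_A P^{-1}) = P D_S^{-1} D_A P^{-1},
\]
which exhibits $S^{-1}A$ as similar (via the same $P$ for all $A$) to the real diagonal matrix $D_S^{-1}D_A$. This immediately shows $S^{-1}\cA$ consists of matrices that are simultaneously similar to real diagonal matrices, hence a commuting family of diagonalizable matrices with real eigenvalues.

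Second, for the ``if'' direction: suppose $S^{-1}\cA$ is a commuting set of diagonalizable matrices with real eigenvalues. Since it is a commuting family of diagonalizable matrices, there is an invertible $Q\in\C^{n\by n}$ with $Q^{-1}(S^{-1}A)Q$ diagonal (and real, by the eigenvalue hypothesis) for every $A\in\cA$; in particular $Q^{-1}S^{-1}SQ = I$ is trivially diagonal, but more usefully I want to first replace $S$ by an identity-like form. The cleaner route: by \cref{obs:sdc_span_subset} it suffices to diagonalize a basis, and I may assume $S\in\cA$. Since $S$ is Hermitian and nonsingular, write $S = R^* J R$ for invertible $R$ and $J = \Diag(\pm 1)$ (congruence to signature form). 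Replacing $\cA$ by $R^{-*}\cA R^{-1}$ — which preserves both the SDC question and the hypotheses, since $(R^{-*}SR^{-1})^{-1}(R^{-*}AR^{-1}) = R(S^{-1}A)R^{-1}$ is a similarity — I may assume $S = J$. Now $J^{-1}\cA = J\cA$ is a commuting family of diagonalizable matrices with real eigenvalues, so it is simultaneously diagonalizable by some invertible $T$: $T^{-1}(JA)T = \Lambda_A$ real diagonal for all $A\in\cA$. The key point is that $JA$ being Hermitian-times-$J$ means $JA$ is self-adjoint with respect to the (possibly indefinite) inner product defined by $J$, i.e. $(JA)^*J = A^*J = JA \cdot J^{-1}\cdot J$... — more directly, $A = J(JA)$ and I want $T^*JT$ diagonal. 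I would argue: since the $JA$ commute and are $J$-self-adjoint ($J(JA)J^{-1} = A^* = A \ne (JA)^*$ in general, so instead use $(JA)^* = A^*J^* = AJ = J(JA)J^{-1}$... wait, that gives $(JA)^* = J(JA)J^{-1}$, i.e. each $JA$ is $J$-Hermitian), their common eigenspaces are mutually $J$-orthogonal, so $T$ can be chosen with $T^*JT$ block-diagonal and then further so that $T^*JT = J'$ is itself diagonal; then $P := T$ gives $P^*AP = P^*J(JA)P = (T^*JT)(T^{-1}(JA)T) = J'\Lambda_A$ diagonal. Undoing the reduction $\cA \mapsto R^{-*}\cA R^{-1}$ yields the SDC transformation for the original $\cA$.

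\textbf{Main obstacle.} The forward direction is routine algebra. The real work is in the converse, specifically the step showing that commuting $J$-Hermitian matrices with real eigenvalues can be simultaneously \emph{congruence}-diagonalized rather than merely similarity-diagonalized — one must check that the common eigenspaces of the family are nondegenerate with respect to the form $J$ (so that $J$ restricts to each as a nondegenerate form and can itself be diagonalized on each block) and pairwise $J$-orthogonal. This is where the real-eigenvalue and commutativity hypotheses are genuinely used; I expect the cleanest write-up to reduce to the case $S=J=I$ when $\cA$ happens to contain a positive definite element, and to handle the indefinite case via the spectral theory of $J$-self-adjoint operators, or alternatively to cite the standard simultaneous-diagonalization result directly. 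Since the excerpt says a short proof is deferred to an appendix, I would keep this tight and lean on \cref{obs:sdc_span_subset} to reduce to a finite commuting family.
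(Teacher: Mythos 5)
Your forward direction matches the paper's verbatim. For the converse, your strategy is essentially the same as the paper's---simultaneously similarity-diagonalize $S^{-1}\cA$, then use Hermiticity of $S$ and the $A$'s to upgrade the similarity to a congruence---but you package it differently. The paper skips your preliminary reduction of $S$ to a signature matrix $J=\Diag(\pm1)$ and works directly with $\bar S\coloneqq P^*SP$; it then derives the needed block structure from the single entrywise identity $\bar S_{i,j}(D_A)_{j,j}=\bar A_{i,j}=(\bar A_{j,i})^*=\bar S_{i,j}(D_A)_{i,i}$, which instantly forces $\bar S_{i,j}=\bar A_{i,j}=0$ whenever some $D_A$ separates indices $i$ and $j$. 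After permuting, $\bar S$ becomes block diagonal and each $\bar A$ is a real scalar multiple of $\bar S$ on each block, so diagonalizing each Hermitian block of $\bar S$ finishes. Your version reaches the same place by appealing to $J$-self-adjoint spectral theory: each $JA$ satisfies $(JA)^*=J(JA)J^{-1}$, common eigenspaces are mutually $J$-orthogonal, and nondegeneracy of each eigenspace then follows from the $J$-orthogonal decomposition of $\C^n$ together with nondegeneracy of $J$. That last chain is correct but you leave it at a sketch (``lean on the spectral theory of $J$-self-adjoint operators''); this is precisely the step the paper spells out with its entrywise identity. So: same idea, different bookkeeping---the paper's index computation is more elementary and self-contained; your indefinite-inner-product framing is more conceptual but requires either citing or proving the $J$-orthogonality/nondegeneracy facts that you only gesture at. One small remark: your interpolation ``$i\sim j$'' step (common eigenvalue pattern) and the paper's permutation/block-grouping step are the same combinatorial move, so once you fill in the $J$-nondegeneracy lemma your argument closes cleanly.
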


\begin{restatable}
	{proposition}{sdccharacterization}
	\label{prop:sdc_characterization}
	Let $\cA\subseteq\S^n$ and suppose $S\in\spann(\cA)$ is a max-rank element of $\spann(\cA)$. Then, $\cA$ is SDC if and only if $\range(A)\subseteq\range(S)$ for every $A\in\cA$ and $\set{A|_{\range(S)}:\, A\in\cA}$ is SDC.
\end{restatable}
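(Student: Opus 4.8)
\textbf{Proof proposal for \cref{prop:sdc_characterization}.}

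The plan is to prove both directions of the equivalence, using \cref{prop:sdc_characterization_invertible} as the engine for the nonsingular core $\range(S)$, and reducing the general (possibly singular) case to that core via a careful block decomposition adapted to $\range(S)$.

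First I would establish the forward direction. Suppose $\cA$ is SDC, witnessed by an invertible $P$ with $P^*AP$ diagonal for all $A\in\cA$. Since $S$ is a max-rank element, $\rank(S)=\max_{A\in\cA}\rank(A)=:r$, and after permuting coordinates we may assume $P^*SP = \Diag(\Lambda, 0_{n-r})$ with $\Lambda\in\R^{r\by r}$ invertible. For any $A\in\cA$, $P^*AP$ is diagonal; I claim its last $n-r$ diagonal entries vanish. If some entry in position $i>r$ were nonzero, then $P^*(A+tS)P$ would have rank at least $r+1$ for all but one value of $t$ (the $r$ nonzero entries of $P^*SP$ plus this one), contradicting maximality of $r=\rank(S)$ over $\spann(\cA)$. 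Hence every $P^*AP$ has the block form $\Diag(D_A, 0_{n-r})$, which immediately gives $\range(A)\subseteq\range(S)$ (both equal, after the change of basis by $P$, to the span of the first $r$ coordinates). Restricting to $\range(S)$ via the surjection determined by the first $r$ columns of $P$ (i.e.\ writing $U$ for that $n\by r$ submatrix) shows $\set{A|_{\range(S)} : A\in\cA}$ is diagonalized by the corresponding $r\by r$ invertible block, so it is SDC.

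Next the reverse direction. Assume $\range(A)\subseteq\range(S)$ for all $A\in\cA$ and that $\cA':=\set{A|_{\range(S)}:A\in\cA}$ is SDC. Pick any basis of $\C^n$ whose first $r$ vectors span $\range(S)=V$; this gives an invertible $R$ with $R^*AR = \Diag(\hat A, 0_{n-r})$ for every $A\in\cA$, because $\range(A)\subseteq V$ forces $A$ (as a Hermitian form) to be supported on $V\by V$ — here one must check the off-diagonal and bottom-right blocks both vanish, which follows since $A$ Hermitian with column space in $V$ has row space in $V$ as well. The map $A\mapsto\hat A$ is exactly (a realization of) $A\mapsto A|_V$, so $\set{\hat A : A\in\cA}\subseteq\H^r$ is SDC by hypothesis; let $Q\in\C^{r\by r}$ be invertible with $Q^*\hat A Q$ diagonal for all $A$. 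Then $P := R\,(Q\oplus I_{n-r})$ is invertible and $P^*AP = \Diag(Q^*\hat A Q, 0_{n-r})$ is diagonal for every $A\in\cA$, so $\cA$ is SDC.

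The main obstacle — really the only non-formal point — is the argument in the forward direction that the ``extra'' diagonal entries of $P^*AP$ (indices beyond $r$) must be zero; equivalently, that $\range(A)\subseteq\range(S)$ whenever $S$ has max rank in $\spann(\cA)$ and $\cA$ is SDC. The clean way is the rank/genericity argument above: in the simultaneously diagonal picture, $\rank(A+tS)$ counts nonzero diagonal entries, and if $A$ had a nonzero entry outside the support of $S$ then generic $t$ would push the rank above $r$. One should state this for a two-element subset $\set{A,S}$ and invoke \cref{obs:sdc_span_subset} to stay within the SDC hypothesis, and handle the at-most-finitely-many bad $t$ carefully. Everything else is bookkeeping with congruence and block-diagonal forms.
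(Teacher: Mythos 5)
Your proposal follows essentially the same route as the paper: prove the forward direction by exhibiting the common block-diagonal form in the simultaneously diagonalizing basis and reading off $\range(A)\subseteq\range(S)$ from the ranks, then obtain the reverse direction from the block form together with the SDC hypothesis on the restrictions. (The paper streamlines the second half by invoking the padding lemma, \cref{lem:sdc_closed_under_padding}, but you have effectively re-proved its easy direction inline, which is fine.) You also spell out the rank/genericity argument showing that the "extra" diagonal entries of $P^*AP$ must vanish, which the paper states only tersely; your sketch is correct, though the "all but one value of $t$" should be "all but finitely many" — there are up to $r$ bad values of $t$, one per nonzero diagonal entry of $P^*SP$, and you acknowledge this.

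There is, however, one genuine slip in the reverse direction. You write: "Pick any basis of $\C^n$ whose first $r$ vectors span $\range(S)=V$; this gives an invertible $R$ with $R^*AR=\Diag(\hat A,0_{n-r})$ \dots which follows since $A$ Hermitian with column space in $V$ has row space in $V$ as well." That implication is false as stated: having row and column space contained in $V$ does not make the off-diagonal blocks of $R^*AR$ vanish unless the last $n-r$ columns of $R$ annihilate $A$. Concretely, with $A=e_1e_1^*$, $V=\spann\{e_1\}$, and $R=\begin{pmatrix}1&1\\0&1\end{pmatrix}$, one gets $R^*AR=\begin{pmatrix}1&1\\1&1\end{pmatrix}$, not block diagonal, even though the first column of $R$ spans $V$. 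The fix is to choose the complementary basis inside $V^\perp$ (e.g.\ take an orthonormal basis adapted to $V\oplus V^\perp$): since $A$ is Hermitian with $\range(A)\subseteq V$, one has $\ker(A)\supseteq V^\perp$, so then $AR$ has its last $n-r$ columns equal to zero and the desired block form follows. With that correction the rest of your argument goes through.
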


We close this section with two lemmas highlighting consequences of the SDC property which we will compare and contrast with consequences of the ASDC property.

\begin{lemma}
\label{lem:sdc_characterization_pd}
Let $\cA\subseteq\S^n$ and suppose $S\in\spann(\cA)$ is positive definite. Then, $\cA$ is SDC if and only if $S^{-1/2}\cA S^{-1/2}$ is a commuting set.
\end{lemma}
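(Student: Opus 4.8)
The plan is to reduce \cref{lem:sdc_characterization_pd} directly to \cref{prop:sdc_characterization_invertible} by observing that the congruence by $S^{-1/2}$ normalizes the positive definite element $S$ to the identity. First I would note that since $S$ is positive definite, its positive definite square root $S^{1/2}\in\H^n$ and its inverse $S^{-1/2}\in\H^n$ are well-defined, Hermitian, and invertible. Hence $P := S^{-1/2}$ is an invertible matrix with $P^* = P$, and for any $A\in\H^n$ the congruence $P^*AP = S^{-1/2}AS^{-1/2}$ is again Hermitian. In particular, $\cA$ is SDC if and only if $S^{-1/2}\cA S^{-1/2} := \set{S^{-1/2}AS^{-1/2} : A\in\cA}$ is SDC, because composing with a fixed invertible congruence is a bijection on the set of SDC families (this is the elementary fact that SDC is preserved under a global congruence change of variables).

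Next I would apply \cref{prop:sdc_characterization_invertible} to the family $\cA' := S^{-1/2}\cA S^{-1/2}$. Observe that $S^{-1/2} S S^{-1/2} = I_n \in \spann(\cA')$, and $I_n$ is nonsingular, so we may take $S' = I_n$ as our distinguished nonsingular element of $\spann(\cA')$. \cref{prop:sdc_characterization_invertible} then says that $\cA'$ is SDC if and only if $(S')^{-1}\cA' = I_n^{-1}\cA' = \cA'$ is a commuting set of diagonalizable matrices with real eigenvalues. But each element of $\cA'$ has the form $S^{-1/2}AS^{-1/2}$ with $A\in\H^n$, hence is itself Hermitian, so it is automatically diagonalizable with real eigenvalues. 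Therefore the condition from \cref{prop:sdc_characterization_invertible} collapses to the single requirement that $\cA' = S^{-1/2}\cA S^{-1/2}$ be a commuting set, which is exactly the claimed condition.

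Combining the two steps: $\cA$ is SDC $\iff$ $S^{-1/2}\cA S^{-1/2}$ is SDC $\iff$ $S^{-1/2}\cA S^{-1/2}$ is a commuting set. The only point requiring a word of care — and the closest thing to an obstacle, though it is minor — is the justification that SDC is invariant under a fixed global congruence and that one may read off $\spann(S^{-1/2}\cA S^{-1/2}) \ni I_n$ as a nonsingular element; both follow immediately since $Q \mapsto S^{-1/2} Q S^{-1/2}$ is linear and bijective on $\H^n$, so it carries $\spann(\cA)$ onto $\spann(S^{-1/2}\cA S^{-1/2})$ and carries the nonsingular element $S$ to $I_n$. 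I would present the argument in two or three lines along exactly these lines, citing \cref{prop:sdc_characterization_invertible} for the main equivalence and noting the Hermitian-hence-diagonalizable-with-real-spectrum observation to discharge the remaining hypotheses of that proposition.
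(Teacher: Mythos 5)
Your proof is correct and is essentially the same as the paper's, which likewise deduces the lemma as an immediate corollary of \cref{prop:sdc_characterization_invertible} together with the Hermiticity of $S^{-1/2}AS^{-1/2}$. The only cosmetic difference is that you first congruence-transform $\cA$ to $S^{-1/2}\cA S^{-1/2}$ and then apply the proposition with distinguished element $I_n$, whereas the paper applies the proposition directly to $\cA$ with distinguished element $S$ and then observes that $S^{-1}A$ is similar to the Hermitian matrix $S^{-1/2}AS^{-1/2}$; these are the same calculation seen from two sides.
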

\begin{proof}
This follows as an immediate corollary to \cref{prop:sdc_characterization_invertible} and the fact that $S^{-1}A$ has the same eigenvalues as the symmetric matrix $S^{-1/2}AS^{-1/2}$.
\end{proof}

In particular, when $\spann(\cA)$ contains a positive definite matrix, the SDC and ASDC properties can be shown to be equivalent.

\begin{corollary}
\label{cor:sdc_iff_asdc_pd}
Let $\cA\subseteq\S^n$ and suppose $S\in\spann(\cA)$ is positive definite. Then, $\cA$ is SDC if and only if $\cA$ is ASDC.
\end{corollary}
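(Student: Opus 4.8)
The plan is to leverage \cref{lem:sdc_characterization_pd}, which (in the presence of a positive definite matrix in the span) reduces the SDC property to a commutativity condition, and then to push that condition through a limit. The forward direction is immediate: if $\cA$ is SDC then it is ASDC, witnessed by the constant mapping $f(A,j) = A$. So the content is the converse.

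For the converse, I would first reduce to a finite set. By \cref{obs:sdc_span_subset,obs:asdc_span_subset}, it suffices to treat the case where $\cA = \set{A_1,\dots,A_m}$ is a basis of $\spann(\cA)$, and I would write the given positive definite element as $S = \sum_{i=1}^m c_i A_i$. By \cref{obs:asdc_finite}, for every $\epsilon > 0$ there exist $\tilde A_1,\dots,\tilde A_m \in \H^n$ with $\norm{A_i - \tilde A_i} \le \epsilon$ for all $i$ and $\set{\tilde A_1,\dots,\tilde A_m}$ SDC. Setting $\tilde S := \sum_{i=1}^m c_i \tilde A_i \in \spann(\set{\tilde A_i})$, I would note that $\norm{S - \tilde S} \le \epsilon \sum_i \abs{c_i}$, so since positive definiteness is an open condition, $\tilde S$ is positive definite once $\epsilon$ is small enough. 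Applying \cref{lem:sdc_characterization_pd} to the SDC set $\set{\tilde A_i}$ together with $\tilde S$, the set $\set{\tilde S^{-1/2}\tilde A_i \tilde S^{-1/2} : i \in [m]}$ is commuting; that is, $[\tilde S^{-1/2}\tilde A_i \tilde S^{-1/2},\, \tilde S^{-1/2}\tilde A_j \tilde S^{-1/2}] = 0$ for all $i,j \in [m]$.

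To finish, I would take $\epsilon \to 0$. Along a sequence $\epsilon_k \to 0$ with associated matrices $\tilde A_i^{(k)} \to A_i$ and $\tilde S^{(k)} \to S$, continuity of the principal square root (and of inversion) on the open cone of positive definite matrices gives $(\tilde S^{(k)})^{-1/2} \to S^{-1/2}$, hence $(\tilde S^{(k)})^{-1/2} \tilde A_i^{(k)} (\tilde S^{(k)})^{-1/2} \to S^{-1/2} A_i S^{-1/2}$. Since the commutator map is continuous and vanishes along the sequence, we conclude $[S^{-1/2} A_i S^{-1/2},\, S^{-1/2} A_j S^{-1/2}] = 0$ for all $i,j$, i.e., $S^{-1/2} \cA S^{-1/2}$ is a commuting set. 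By \cref{lem:sdc_characterization_pd} again, $\cA$ is SDC.

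The only points requiring any care are the continuity of the matrix square root on positive definite matrices and the fact that $\tilde S$ remains positive definite for small $\epsilon$; both are standard, so I do not anticipate a genuine obstacle. The one conceptual ingredient that makes everything work is \cref{lem:sdc_characterization_pd}: it recasts SDC as the closed condition ``$S^{-1/2}\cA S^{-1/2}$ commutes,'' and closed conditions are automatically preserved under the limits defining ASDC.
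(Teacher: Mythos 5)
Your proof is correct and follows exactly the route the paper implicitly intends: the paper states this corollary without proof, immediately after \cref{lem:sdc_characterization_pd}, precisely because the intended argument is that \cref{lem:sdc_characterization_pd} recasts SDC as the closed commutativity condition $[S^{-1/2}A_iS^{-1/2}, S^{-1/2}A_jS^{-1/2}]=0$, which passes to limits. Your write-up carefully fills in the details the paper leaves implicit (reduction to a finite basis via \cref{obs:asdc_span_subset}, openness of positive definiteness so that $\tilde S$ stays invertible for small $\epsilon$, continuity of the principal square root on the positive definite cone), and none of these steps has a gap.
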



Despite \cref{cor:sdc_iff_asdc_pd}, we will see soon that the ASDC property is qualitatively quite different to the SDC property in a number of settings (in particular, for singular pairs of symmetric matrices; see \cref{thm:asdc_singular}). Specifically, we will contrast the following consequence of the SDC property.
\begin{lemma}[{\cite[Lemma 9]{le2020equivalent}}]
\label{lem:sdc_closed_under_padding}
Let $\cA\subseteq\S^n$ and suppose there exists a common block decomposition
\begin{align*}
A = \begin{pmatrix}
	\bar A &\\
	& 0_d
\end{pmatrix}
\end{align*}
for all $A\in\cA$.
Then $\cA$ is SDC if and only if $\set{\bar A:\, A\in\cA}\subseteq\S^{n-d}$ is SDC.
\end{lemma}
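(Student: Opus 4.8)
The plan is to prove both directions, with the forward direction (SDC of $\cA$ implies SDC of the compressions $\set{\bar A}$) being the one requiring some care. The reverse direction is essentially trivial: if $\bar P \in \C^{(n-d)\times(n-d)}$ simultaneously diagonalizes all $\bar A$ via congruence, then $P = \bar P \oplus I_d$ is invertible and $P^* A P = (\bar P^* \bar A \bar P) \oplus 0_d$ is diagonal for every $A \in \cA$. So the content is in the forward direction.

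For the forward direction, I would reduce to a single nonzero matrix first. If every $A \in \cA$ is the zero matrix the claim is vacuous, so by \cref{obs:sdc_span_subset} (SDC is closed under spans and subsets) we may pass to a basis $\set{A_1,\dots,A_m}$ of $\spann(\cA)$, and it suffices to handle that finite set; moreover $\set{\bar A_1, \dots, \bar A_m}$ is a spanning set of $\spann(\set{\bar A : A \in \cA})$, so proving it is SDC gives the conclusion. Now pick any $i$ with $A_i \neq 0$. Let $P$ be the invertible matrix witnessing the SDC property of $\cA$, so $P^* A_k P = D_k$ is diagonal for all $k$. The key observation is that the block structure forces each $A_k$ to have $\range(A_k) \subseteq \range(\Diag(I_{n-d}, 0_d)) = \C^{n-d} \times \set{0}^d =: V$ and $\nulll(A_k) \supseteq \set{0}^{n-d}\times\C^d =: W$; in particular $\spann(\cA)$ is a set of Hermitian matrices all supported on the first $n-d$ coordinates.

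The cleanest route from here is to invoke \cref{prop:sdc_characterization}. Among $A_1, \dots, A_m$, let $S$ be a max-rank element of $\spann(\cA)$ — more precisely, since spans are preserved I can take $S \in \spann(\cA)$ of maximum rank, and WLOG (adjusting the basis) assume $S = A_1$. Since every $A_k$ has $\range(A_k) \subseteq V = \range\!\begin{psmallmatrix} \bar S & \\ & 0_d\end{psmallmatrix}$ and in fact the relevant range lives inside $\range(S)$: one checks $\range(S) = \range(\bar S) \times \set{0}^d$ and, because $S$ has max rank and is SDC-compatible with the rest, $\range(A_k) \subseteq \range(S)$ for all $k$ — this is exactly what \cref{prop:sdc_characterization} extracts from the SDC hypothesis. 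That proposition then tells us $\set{A_k|_{\range(S)}}$ is SDC in $\H^{\rank(S)}$. Restricting to $\range(S) \subseteq V$ and then identifying $V \cong \C^{n-d}$, the matrix $A_k|_{\range(S)}$ is simply $\bar A_k|_{\range(\bar S)}$. Applying \cref{prop:sdc_characterization} once more in the $\H^{n-d}$ world — with $\bar S$ a max-rank element of $\spann(\set{\bar A})$, and using that $\range(\bar A_k) \subseteq \range(\bar S)$ translates directly from $\range(A_k)\subseteq\range(S)$ — we conclude $\set{\bar A_k}$ is SDC.

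The main obstacle, and the place to be careful, is verifying that $\range(S)$ really is contained in $V$ and that $S$ restricted to $\range(S)$, viewed appropriately, equals $\bar S$ restricted to its range — i.e., tracking the two nested restrictions (first to $\range(S)$, and the identification of $V$ with $\C^{n-d}$) so that the block $\bar A_k$ of $A_k$ is correctly matched with $A_k|_{\range(S)}$ up to a further harmless congruence. This is purely bookkeeping about restriction maps (the restriction $A|_V$ is only defined up to the choice of surjection $U$, per the notation section, and SDC is insensitive to that choice), but it is the step where an error would hide. An alternative, more hands-on proof avoids \cref{prop:sdc_characterization} entirely: write $P$ in block form $P = \begin{psmallmatrix} P_{11} & P_{12} \\ P_{21} & P_{22}\end{psmallmatrix}$, use $P^* A_k P$ diagonal together with $A_k = \bar A_k \oplus 0_d$ to deduce that the first $n-d$ columns of $P$, restricted suitably, already diagonalize the $\bar A_k$; one shows the top-left $(n-d)\times(n-d)$ portion can be taken invertible after column operations that do not disturb the zero block. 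I would present the \cref{prop:sdc_characterization} argument as the main proof since it is shorter and reuses machinery already in hand.
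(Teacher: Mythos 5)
The paper does not contain its own proof of this lemma --- it cites it as \cite[Lemma~9]{le2020equivalent} --- so you are being judged on correctness and self-containedness rather than on matching a particular argument. Against that standard your primary proof has a fatal problem: it is circular. You invoke \cref{prop:sdc_characterization} in both directions (first to extract $\range(A_k)\subseteq\range(S)$ and SDC of the restrictions, then again in $\H^{n-d}$ to reassemble), but the paper's proof of \cref{prop:sdc_characterization} in \cref{sec:proof_sdc_characterization} explicitly reduces to --- and concludes by ``applying \cref{lem:sdc_closed_under_padding}'' --- the very lemma you are trying to prove. Both directions of that proposition rest on this lemma, so neither can be used to establish it without begging the question.

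The ``hands-on'' alternative you relegate to a final remark is actually the correct (and short) proof, and you should promote it. Concretely: write $P=\begin{pmatrix}P_1\\P_2\end{pmatrix}$ with $P_1\in\C^{(n-d)\times n}$. Since $A=\bar A\oplus 0_d$, the congruence collapses to $P^*AP=P_1^*\bar A P_1$, which is diagonal for all $A\in\cA$. Invertibility of $P$ forces $P_1$ to have full row rank $n-d$, so there is a fixed index set $I\subseteq[n]$, $|I|=n-d$, for which the column submatrix $M=(P_1)_{\cdot,I}$ is invertible; then $M^*\bar A M=(P_1^*\bar A P_1)_{I,I}$ is a principal submatrix of a diagonal matrix and hence diagonal, for every $\bar A$ simultaneously. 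One caution on your phrasing: the ``column operations'' on $P$ must be restricted to a column \emph{selection} (equivalently a permutation), not arbitrary invertible column operations, since right-multiplying $P$ by a general invertible $R$ turns the diagonal $P^*AP$ into $R^*(P^*AP)R$, which need not be diagonal. With that fix the direct argument is clean and avoids all of the bookkeeping you flagged as the danger point in the \cref{prop:sdc_characterization} route.
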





\section{The ASDC property of symmetric pairs}
\label{sec:asdc_pairs}
In this section, we will give a complete characterization of the ASDC property for pairs of symmetric matrices (henceforth, \emph{symmetric pairs}).
We will switch the notation above and label our matrices $\cA = \set{A,B}$.
Our analysis will proceed in two cases: when $\set{A,B}$ is nonsingular and singular respectively.

\subsection{A canonical form for symmetric pairs}
In this section and the next, we will make regular use of the canonical form for symmetric pairs \cite{lancaster2005canonical,uhlig1976canonical}.

We will need to define the following special matrices.
For $n\geq 2$, let $F_n,\,G_n,\,H_n\in\S^n$ denote the matrices of the form
\begin{align*}
F_n &= \begin{smallpmatrix}
&&1\\
&\bddots&\\
1&&
\end{smallpmatrix},\,\qquad
G_n = \begin{smallpmatrix}
&&&0\\
&&\bddots&1\\
&0 &\bddots&\\
0 &1&&
\end{smallpmatrix},\,\quad\text{and}\quad
H_n = \begin{smallpmatrix}
&&1&0\\
&\bddots&0&\\
1 &\bddots &&\\
0 &&&
\end{smallpmatrix}.
\end{align*}
Set $F_1 = (1)$ and $G_1  = H_1 = (0)$.

The following proposition is adapted\footnote{The original statement of \cite[Theorem 9.1]{lancaster2005canonical} contains one additional type of block: those corresponding to the eigenvalues at infinity. These blocks do not exist in our setting by the assumption that $A$ is a max-rank element of $\spann(\set{A,B})$.} from \cite[Theorem 9.1]{lancaster2005canonical}.
\begin{proposition}
\label{prop:canonical_form}
        Let $A,B\in\S^n$ and suppose $A$ is a max-rank element of $\spann(\set{A,B})$. Then, there exists an invertible $P\in\R^{n\by n}$ such that $P^\intercal AP=\Diag(S_1,\dots,S_m)$ and $P^\intercal BP=\Diag(T_1,\dots,T_m)$ are block diagonal matrices with compatible block structure. Here, $m = m_1+m_2+m_3+m_4$ corresponds to four different types of blocks where each $m_i\in\N_0$ may be zero. Additionally, $m_4\in\set{0,1}$.

        The first $m_1$-many blocks of $P^\intercal AP$ and $P^\intercal BP$ have the form
        \begin{align*}
        S_i = \sigma_i F_{n_i},\qquad T_i = \sigma_i(\lambda_i F_{n_i}+G_{n_i}),
        \end{align*}
        where $n_i\in\N$, $\sigma_i\in\set{\pm 1}$, and $\lambda_i\in\R$. The next $m_2$-many blocks of $P^\intercal AP$ and $P^\intercal BP$ have the form
        \begin{align}
        \label{eq:canonical_form_imaginary}
        S_i = \begin{smallpmatrix}
                & F_{n_i}\\
                F_{n_i} &
        \end{smallpmatrix},\qquad
        T_i = F_{n_i}\otimes \begin{smallpmatrix}
                \Im(\lambda_i)& \Re(\lambda_i)\\
                \Re(\lambda_i) & -\Im(\lambda_i)
        \end{smallpmatrix} + G_{n_i}\otimes F_2,
        \end{align}
        where $n_i\in\N$ and $\lambda_i \in\C\setminus\R$. The next $m_3$-many blocks of $P^\intercal AP$ and $P^\intercal BP$ have the form
        \begin{align*}
        S_i = \begin{smallpmatrix}
                && F_{n_i}\\
                &0&\\
                F_{n_i}&&
        \end{smallpmatrix},\qquad
        T_i = G_{2n_i +1},
        \end{align*}
        where $n_i\in\N$. If $m_4 =1$, then the last block of $P^\intercal AP$ and $P^\intercal BP$ has the form $S_m = T_m = 0_{n_m}$ for some $n_m\in\N$.
\end{proposition}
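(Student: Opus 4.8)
The plan is to derive the statement from the classical congruence canonical form for Hermitian matrix pencils, \cite[Theorem 6.1]{lancaster2005canonical}, and then to discard the one block type that the max-rank hypothesis forbids. So the first step is simply to invoke that theorem: it yields an invertible $P\in\C^{n\by n}$ with $P^*AP=\Diag(S_1,\dots,S_M)$ and $P^*BP=\Diag(T_1,\dots,T_M)$ block diagonal with compatible block structure, where the blocks $(S_i,T_i)$ fall into \emph{five} families --- the three displayed in the proposition (the $m_1$ blocks attached to finite real eigenvalues, the $m_2$ blocks attached to conjugate pairs of non-real eigenvalues, and the $m_3$ blocks making up the singular part of the pencil), the common-kernel zero part, and, in addition, a family of ``infinity'' blocks of the form $S_i=\sigma_i G_{n_i}$, $T_i=\sigma_i F_{n_i}$ with $n_i\in\N$ and $\sigma_i\in\set{\pm1}$. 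The common-kernel part may be collapsed into a single block $0_{n_m}$ with $n_m=\dim(\ker A\cap\ker B)$, which accounts for the constraint $m_4\in\set{0,1}$. It then remains only to show that the max-rank hypothesis on $A$ rules out every infinity block.

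For the second step I would argue by perturbation. Suppose for contradiction that $(P^*AP,P^*BP)$ contains an infinity block $(S_i,T_i)=(\sigma_i G_{n_i},\sigma_i F_{n_i})$, and consider $A_\epsilon:=A+\epsilon B\in\spann(\set{A,B})$ for real $\epsilon\neq 0$. On this block $A_\epsilon$ restricts to $\sigma_i(G_{n_i}+\epsilon F_{n_i})$, and since $F_{n_i}(G_{n_i}+\epsilon F_{n_i})=F_{n_i}G_{n_i}+\epsilon I_{n_i}$ with $F_{n_i}G_{n_i}$ nilpotent (its only nonzero entries lie on the first subdiagonal) and $F_{n_i}$ invertible, this restriction is invertible for every $\epsilon\neq 0$, whereas $S_i=\sigma_i G_{n_i}$ has rank $n_i-1$. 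On each $m_1$ and $m_2$ block the restriction of $A$ is already invertible, hence so is the restriction of $A_\epsilon$ for $\epsilon$ small; on the zero block $A_\epsilon$ restricts to $0$; and on each $m_3$ block the rank of the restriction of $A_\epsilon$ is at least that of $S_i$ for $\epsilon$ small by lower semicontinuity of rank. Summing the per-block contributions gives $\rank(A_\epsilon)\geq\rank(A)+1$ for all sufficiently small $\epsilon\neq 0$, contradicting the maximality of $\rank(A)$ over $\spann(\set{A,B})$. Therefore no infinity block occurs, and $(P^*AP,P^*BP)$ has exactly the four advertised block types with the stated parameter constraints.

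I expect essentially all the difficulty to lie in bookkeeping: translating the notation and sign conventions of \cite[Theorem 6.1]{lancaster2005canonical} into the $F_n$, $G_n$, $H_n$ notation fixed above, singling out which of its block families plays the role of the ``infinity'' blocks, and checking the elementary per-block rank facts used in the perturbation step --- chiefly that $\lambda F_n+G_n$ is invertible for $\lambda\neq 0$ (equivalently, that $F_n G_n$ is nilpotent). As an independent consistency check, one can count ranks directly in the canonical form: the normal (generic) rank of the pencil equals the total block size minus $m_3-n_m$, since the singular part and the common kernel are the only unavoidable sources of rank deficiency, whereas $\rank(A)$ in the canonical form equals this same total minus the number of infinity blocks; hence $A$ is max-rank exactly when there are none, in agreement with the perturbation argument.
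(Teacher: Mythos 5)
Your proposal is correct and follows exactly the approach the paper takes: invoke \cite[Theorem 6.1]{lancaster2005canonical} and then argue that the max-rank hypothesis on $A$ forbids the ``infinity'' blocks. The paper itself does not spell out this last step -- it is asserted only in a footnote attached to the proposition -- so your perturbation argument (that on an infinity block $\sigma_i(G_{n_i}+\epsilon F_{n_i})$ is invertible for every $\epsilon\neq 0$ because $F_{n_i}G_{n_i}$ is nilpotent and $F_{n_i}^2=I_{n_i}$, while on the remaining block types the rank of $A_\epsilon$ is at least that of $A$ for $\epsilon$ small, forcing $\rank(A_\epsilon)>\rank(A)$) is a valid and more explicit version of what the paper leaves implicit. (Minor nit: in your closing consistency check, the normal rank should be the total block size minus $m_3+n_m$, not $m_3-n_m$; this does not affect the main argument.)
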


We will repeatedly encounter real matrices that represent complex numbers, e.g., the blocks $S_i^{-1}T_i$ for $i$ corresponding to $m_2$ in the canonical form. We recall some useful facts: Let $J\in\C^{2\by 2}$ be the unitary matrix
\begin{align*}
J \coloneqq \begin{pmatrix}
        \tfrac{\imag}{\sqrt{2}} & \tfrac{-\imag}{\sqrt{2}}\\
        \tfrac{1}{\sqrt{2}} & \tfrac{1}{\sqrt{2}}
\end{pmatrix}\in\C^{2\by 2}.
\end{align*}
Then, a matrix of the form $\begin{smallpmatrix}
        \Im(\lambda_i)& \Re(\lambda_i)\\
        \Re(\lambda_i) & -\Im(\lambda_i)
\end{smallpmatrix}$ has the same eigenvalues as
\begin{align*}
J^*\begin{smallpmatrix}
        \Re(\lambda_i) & -\Im(\lambda_i)\\
        \Im(\lambda_i)& \Re(\lambda_i)
\end{smallpmatrix}J = \begin{smallpmatrix}
        \lambda_i &\\&\lambda_i^*
\end{smallpmatrix}.
\end{align*}

\subsection{The nonsingular case}
In this section, we will show that if $A$ is invertible, then $\set{A,B}$ is ASDC if and only if $A^{-1} B$ has real eigenvalues. We begin by examining two examples that are representative of the situation when $A$ is invertible.
Note in this case, that $m_3 = m_4 = 0$ in the canonical form (\cref{prop:canonical_form}).

\begin{example}
Let $\lambda\in\R$ and consider
\begin{align*}
A = \begin{pmatrix}
        &1\\
        1&
\end{pmatrix} = F_2,\qquad
B = \begin{pmatrix}
        0 & \lambda\\
        \lambda & 1
\end{pmatrix} = \lambda F_2 + G_2.
\end{align*}
Noting that $A^{-1}B$ is not diagonalizable, we conclude via \cref{prop:sdc_characterization_invertible} that $\set{A,B}$ is not SDC. On the other hand, let $\epsilon>0$ and define
\begin{align*}
\tilde B = \begin{pmatrix}
        \epsilon & \lambda\\
        \lambda & 1
\end{pmatrix}.
\end{align*}
Now, $A^{-1}\tilde B$ has eigenvalues $\lambda \pm \sqrt{\epsilon}$, whence by \cref{prop:sdc_characterization_invertible} $\set{A,\tilde B}$ is SDC.\mathprog{\qed}
\end{example}

\begin{example}
Let $\lambda\in\C\setminus\R$ and consider
\begin{align*}
A= F_2 = \begin{pmatrix}
        & 1\\
        1 &
\end{pmatrix},\qquad
B=
\begin{pmatrix}
        \Im(\lambda) & \Re(\lambda)\\
        \Re(\lambda) & -\Im(\lambda)
\end{pmatrix}.
\end{align*}
Noting that $A^{-1}B$ has non-real eigenvalues, we conclude via \cref{prop:sdc_characterization_invertible} (and the fact that eigenvalues vary continuously) that $\set{A,B}$ is not ASDC. \mathprog{\qed}
\end{example}

The following technical lemma will be useful in proving the main result of this section and shows that it is possible to perturb $B$ to ensure that $A^{-1}B$ has simple eigenvalues while maintaining its number of real/complex eigenvalues.

\begin{lemma}
\label{lem:perturbed_canonical_form}
Let $\set{A,B}\subseteq\S^n$ and suppose $A$ is invertible. For all $\epsilon>0$, there exists $\tilde B$ such that
\begin{itemize}
        \item $\norm{B -\tilde B}\leq \epsilon$,
        \item $A^{-1}\tilde B$ has simple eigenvalues (whence $A^{-1}\tilde B$ is diagonalizable), and
        \item $A^{-1}\tilde B$ and $A^{-1}B$ have the same number of real eigenvalues counted with multiplicity.
\end{itemize}
\end{lemma}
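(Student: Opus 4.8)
The plan is to work block-by-block with respect to the canonical form of \cref{prop:canonical_form}. Since $A$ is invertible, the block of type $m_4$ (the zero block) is absent, and each block $(S_i,T_i)$ in the decomposition $P^*AP = \Diag(S_1,\dots,S_m)$, $P^*BP = \Diag(T_1,\dots,T_m)$ has $S_i$ invertible. Because $\norm{\cdot}$ and the count of real eigenvalues (with multiplicity) of $A^{-1}B = P\,\Diag(S_1^{-1}T_1,\dots,S_m^{-1}T_m)\,P^{-1}$ both decompose over the blocks up to the fixed factor $P$, it suffices to produce, for each block and each $\epsilon>0$, a Hermitian perturbation $\tilde T_i$ of $T_i$ with $\norm{T_i - \tilde T_i}$ small such that $S_i^{-1}\tilde T_i$ has simple eigenvalues and the same number of real eigenvalues as $S_i^{-1}T_i$; then $\tilde B := (P^{-1})^* \Diag(\tilde T_1,\dots,\tilde T_m) P^{-1}$ works after rescaling $\epsilon$ by $\norm{P}^2$ and, for the simple-eigenvalue condition, perturbing each block independently so that the $m$ spectra are pairwise disjoint.

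First I would handle the type-$1$ blocks $S_i = \sigma_i F_{n_i}$, $T_i = \sigma_i(\lambda_i F_{n_i} + G_{n_i})$ with $\lambda_i \in \R$. Here $S_i^{-1}T_i = \lambda_i I + F_{n_i}G_{n_i}$, a single Jordan block at the real eigenvalue $\lambda_i$; all $n_i$ eigenvalues are real. I would perturb $T_i$ to $\tilde T_i = \sigma_i(\lambda_i F_{n_i} + G_{n_i} + \delta E)$ where $E$ is a Hermitian matrix (e.g. $\delta$ times $F_{n_i}$ composed appropriately) chosen so that $S_i^{-1}\tilde T_i$ becomes $\lambda_i I + F_{n_i}G_{n_i} + \delta I$-type companion perturbation with characteristic polynomial $(x-\lambda_i)^{n_i} - \delta$; its roots are $\lambda_i + \delta^{1/n_i}\zeta$ over the $n_i$-th roots of unity $\zeta$, which are simple for $\delta$ small. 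The number of these that are real is $1$ or $2$ depending on the parity of $n_i$ and the sign of $\delta$ — so I must choose the sign of $\delta$ to match the parity of $n_i$ and keep exactly one real root when $n_i$ is odd, two when $n_i$ is even; a moment's bookkeeping (matching with the fact that a single real Jordan block of size $n_i$ should be thought of as contributing $n_i$ reals, but we only need to \emph{preserve the count of real eigenvalues of $A^{-1}\tilde B$ versus $A^{-1}B$}, hence actually we need all $n_i$ perturbed eigenvalues real) shows the real case is the one to be careful with. Concretely, for real $\lambda_i$ I would instead use a perturbation keeping $S_i^{-1}\tilde T_i$ a real-spectrum matrix: perturb to a matrix similar to $\Diag(\lambda_i + c_1\delta,\dots,\lambda_i + c_{n_i}\delta)$ with distinct reals $c_k$ — this is possible since the generic perturbation of a Jordan block within the congruence-compatible family splits the eigenvalue, and one checks the splitting can be kept real by a symmetry/sign argument on the arrowhead-type perturbation (cf.\ the arrowhead-matrix computation cited for \cref{thm:asdc_singular}).

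Next, the type-$2$ blocks pair a Jordan block at $\lambda_i \in \C\setminus\R$ with its conjugate at $\lambda_i^*$; $S_i^{-1}T_i$ has spectrum $\{\lambda_i,\lambda_i^*\}$ each with multiplicity $n_i$ and zero real eigenvalues. A small Hermitian perturbation keeps all eigenvalues off the real axis (they vary continuously and start at distance $|\Im \lambda_i|>0$), so here I only need a generic perturbation splitting the two Jordan blocks into $2n_i$ simple eigenvalues, which is automatic for a generic $\tilde T_i$ in the Hermitian family; the count of real eigenvalues stays $0$. The type-$3$ blocks have $S_i^{-1}T_i = F_{2n_i+1}G_{2n_i+1}$, a single nilpotent Jordan block of odd size $2n_i+1$ at eigenvalue $0$ (all $2n_i+1$ eigenvalues real); I would perturb exactly as in the type-$1$ real case, splitting $0$ into $2n_i+1$ distinct real eigenvalues.

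\textbf{The main obstacle.} The hard part is not the generic splitting into simple eigenvalues — that is standard — but verifying that for the Jordan blocks sitting at a \emph{real} eigenvalue (types $1$ and $3$), the perturbation can be taken \emph{within the Hermitian-congruence-compatible family} while keeping all of the split eigenvalues real. A generic Hermitian perturbation of $S_i$ and $T_i$ will in general push some eigenvalues of $S_i^{-1}\tilde T_i$ off the real axis (this is exactly the phenomenon behind the second \cref{lem:sdc_closed_under_padding}-style example, where $\Diag(1,-1)$ vs.\ the flip is \emph{not} ASDC). So I need to exhibit an explicit one-parameter Hermitian perturbation of the block $(S_i,T_i)$ — plausibly diagonal-plus-arrowhead in the canonical basis — and compute its characteristic polynomial to confirm all roots are real and simple for small parameter. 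This is where the "invertibility of a certain matrix related to the eigenvalues of an arrowhead matrix" alluded to in the outline for \cref{thm:asdc_singular} presumably enters, and I would import that computation here. Once each block is handled, assembling the global $\tilde B$ and tracking the $\epsilon$'s through $\norm{P}$ is routine.
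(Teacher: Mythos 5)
Your block-by-block plan and your diagnosis of ``the main obstacle'' are both right, but the proposal stops exactly where the proof is actually needed. For a type-$1$ block with real $\lambda_i$, you write that one can perturb within the Hermitian family so that $S_i^{-1}\tilde T_i$ becomes similar to $\Diag(\lambda_i+c_1\delta,\dots,\lambda_i+c_{n_i}\delta)$ with distinct real $c_k$, and defer the justification to ``a symmetry/sign argument on the arrowhead-type perturbation.'' That is the whole content of the lemma, and it is not supplied: the arrowhead computation used for \cref{thm:asdc_singular} is a bordered (dimension-adding) perturbation designed for the complex-conjugate blocks in the singular case, not a within-block Hermitian perturbation of a real Jordan block, so ``importing'' it does not close the gap. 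The paper's resolution is an explicit and elementary one that you did not find: replace $G_{n_i}$ by $G_{n_i}+\epsilon H_{n_i}$ inside the canonical block. This keeps $\tilde T_i$ Hermitian (since $H_{n_i}$, like $F_{n_i}$ and $G_{n_i}$, is symmetric) and turns $S_i^{-1}\tilde T_i$ into the Toeplitz tridiagonal matrix $(\lambda_i+\eta_i)I + F_{n_i}G_{n_i} + \epsilon F_{n_i}H_{n_i}$ with $1$ on the superdiagonal and $\epsilon$ on the subdiagonal, whose eigenvalues are the closed-form real numbers $\lambda_i+\eta_i+2\sqrt{\epsilon}\cos\!\bigl(\tfrac{\pi j}{n_i+1}\bigr)$; the free shift $\eta_i$ then makes all spectra across blocks simple. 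Your first attempt (a companion-style perturbation with characteristic polynomial $(x-\lambda_i)^{n_i}-\delta$) is, as you note, wrong precisely because it produces non-real roots; but you never replace it with a working construction.

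Two smaller points. First, type-$3$ blocks cannot occur here: $S_i$ for a type-$3$ block has a $0$ on its diagonal, hence is singular, so $A$ invertible forces $m_3=m_4=0$ (the paper notes this explicitly); your discussion of splitting $F_{2n_i+1}G_{2n_i+1}$ is vacuous in this lemma. Second, for the type-$2$ blocks your continuity argument that the real-eigenvalue count stays $0$ is fine, but ``simple eigenvalues for a generic Hermitian $\tilde T_i$'' is asserted rather than shown; the paper instead uses the same $G+\epsilon H$ device for those blocks too, getting explicit simple eigenvalues clustered near $\lambda_i$ and $\lambda_i^*$ in one stroke, which is tidier and avoids any genericity appeal.
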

\begin{proof}
Without loss of generality, we may assume that $A= \Diag(S_1,\dots,S_m)$ and $B=\Diag(T_1,\dots,T_m)$ are in canonical form (\cref{prop:canonical_form}). Note that as $A$ is invertible, we will have $m_3 = m_4=0$.
For notational convenience, let $r = m_1$ and let $\sigma_1,\dots,\sigma_r, n_1\dots, n_m, \lambda_1,\dots,\lambda_m$ denote the quantities furnished by \cref{prop:canonical_form}.
We will give a probabilistic construction (summarized in \cref{proc:simple_eigs}) for $\tilde B$ that satisfies all three conditions with probability one.

Let $\delta = \frac{\epsilon}{2}$ and pick a random $\eta$ uniformly from $[-\delta,\delta]^m$. Define the blocks $\tilde T_i$ as
\begin{align}
\label{eq:perturbed_canonical_form_T_construction}
\tilde T_i &\coloneqq T_i + \sigma_i\left(\eta_i F_{n_i} + \delta H_{n_i}\right),\,\forall i\in[r],\nonumber\\
\tilde T_i &\coloneqq T_i + \left(\eta_i F_{n_i} + \delta H_{n_i}\right) \otimes F_2,\,\forall i\in[r+1,m],
\end{align}
and set $\tilde B \coloneqq \Diag(\tilde T_1,\dots,\tilde T_m)$. Then, $A^{-1}\tilde B = \Diag(S_1^{-1}\tilde T_1,\dots,S_k^{-1}\tilde T_k)$ is again a block diagonal matrix. Note that for $i\in[r]$, the block
\begin{align*}
S_i^{-1}\tilde T_i = (\lambda_i + \eta_i)I_{n_i} + F_{n_i}G_{n_i} + \delta F_{n_i}H_{n_i}
\end{align*}
is a Toeplitz tridiagonal matrix. Next, for $i\in[r+1,m]$, the block $S_i^{-1}\tilde T_i$ has the form
\begin{align}
\label{eq:perturbed_canonical_form_ST_construction}
S_i^{-1}\tilde T_i =
I_{n_i}\otimes \begin{pmatrix}
        \Re(\lambda_i) & -\Im(\lambda_i)\\\Im(\lambda_i) & \Re(\lambda_i)
\end{pmatrix} + \left(\eta_i I_{n_i} + F_{n_i}G_{n_i} + \delta F_{n_i}H_{n_i}\right)\otimes I_2.
\end{align}
Note that $S_i^{-1}\tilde T_i$ has the same eigenvalues as
\begin{align*}
&\left(I_{n_i}\otimes J\right)^{-1}S_i^{-1}\tilde T_i
\left(I_{n_i}\otimes J\right)\\
&\qquad= I_{n_i}\otimes \begin{pmatrix}\lambda_i &\\& \lambda_i^*
\end{pmatrix} + \left(\eta_i I_{n_i} + F_{n_i}G_{n_i} + \delta F_{n_i}H_{n_i}\right)\otimes I_2.
\end{align*}
This is, up to a simultaneous permutation of rows and columns, a direct sum of two Toeplitz tridiagonal matrices.

Using the closed form expression for eigenvalues of Toeplitz tridiagonal matrices \cite{horn2012matrix}, we have that $A^{-1}\tilde B$ has eigenvalues
\begin{align*}
&\bigcup_{i=1}^r \set{\lambda_i + \eta_i + 2\sqrt{\delta}\cos\left(\frac{\pi j}{n_i + 1}\right):j\in[n_i]}\\
&\qquad\cup\bigcup_{i=r+1}^m\set{\lambda + \eta_i + 2\sqrt{\delta}\cos\left(\frac{\pi j}{n_i + 1}\right):j\in[n_i],\,\lambda\in\set{\lambda_i,\lambda_i^*}}.
\end{align*}
Note that the quantity $\eta_i + 2\sqrt{\delta}\cos\left(\frac{\pi j}{n_i + 1}\right)$ is real so that $A^{-1}B$ and $A^{-1}\tilde B$ have the same number of real eigenvalues counted with multiplicity.
Furthermore, as $\eta\in[-\delta,\delta]^m$ was picked uniformly at random, we have that $A^{-1}\tilde B$ has only simple eigenvalues with probability one.

Finally, $\norm{B - \tilde B} = \norm{\Diag(T_1 - \tilde T_1,\dots,T_m - \tilde T_m)} = \max_{i}\norm{T_i - \tilde T_i} \leq \epsilon$.
\end{proof}

\begin{algorithm}[t]
\caption{Construction for simple eigenvalues}
\label{proc:simple_eigs}
\textbf{Input}: $A',B'\in\S^n$ such that $A'$ is invertible and $\epsilon'>0$\\
{\textbf{Output}: $\tilde B'$ satisfies $\norm{\tilde B'-B'}\leq \epsilon'$ and $(A')^{-1}\tilde B'$ with simple eigenvalues with probability one}
\begin{enumerate}
        \item Compute the canonical form~\cite{lancaster2005canonical} for $\set{A',B'}$, i.e.,
        \begin{align*}
        P^\intercal A'P &= A = \Diag(S_1,\dots,S_m),\text{ and}\\
        P^\intercal B'P &= B = \Diag(T_1,\dots,T_m).
        \end{align*}
        \item Set $\epsilon = \epsilon'/\norm{P^{-1}}^2$ and $\delta = \frac{\epsilon}{2}$
        \item Pick $\eta$ uniformly at random from $[-\delta,\delta]^m$
        \item Return $\tilde B' \coloneqq P^{-\intercal}\Diag(\tilde T_1,\dots,\tilde T_m)P^{-1}$, where $\tilde T_i$ are defined in \eqref{eq:perturbed_canonical_form_T_construction}
\end{enumerate}
\end{algorithm}

The following theorem follows as a simple corollary to our developments thus far.
\begin{theorem}
\label{thm:asdc_characterization_invertible}
Let $A,B\in\S^n$ and suppose $A$ is invertible. Then, $\set{A,B}$ is ASDC if and only if $A^{-1}B$ has real eigenvalues.
\end{theorem}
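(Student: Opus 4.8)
The plan is to derive \cref{thm:asdc_characterization_invertible} directly from \cref{lem:perturbed_canonical_form} and \cref{prop:sdc_characterization_invertible}. One direction is essentially already observed in the second example above: if $A^{-1}B$ has a non-real eigenvalue, then since $A$ is invertible and eigenvalues depend continuously on the matrix entries, any $\tilde B$ sufficiently close to $B$ will make $A^{-1}\tilde B$ invertible with a non-real eigenvalue as well; hence $\{A,\tilde B\}$ cannot be SDC by \cref{prop:sdc_characterization_invertible} (an SDC pair with invertible $A$ must have $A^{-1}\tilde B$ diagonalizable with real eigenvalues). By \cref{obs:asdc_finite}, this shows $\{A,B\}$ is not ASDC.

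For the converse, suppose $A^{-1}B$ has real eigenvalues. Fix $\epsilon>0$ and apply \cref{lem:perturbed_canonical_form} to obtain $\tilde B$ with $\norm{B-\tilde B}\le\epsilon$, such that $A^{-1}\tilde B$ has simple eigenvalues (so is diagonalizable) and has the same number of real eigenvalues, counted with multiplicity, as $A^{-1}B$. Since $A^{-1}B$ has all $n$ eigenvalues real, so does $A^{-1}\tilde B$. Thus $A^{-1}\tilde B$ is diagonalizable with real eigenvalues, and moreover $\{A,\tilde B\}$ is trivially a commuting set (it has only two elements, and $A\tilde B = \tilde B A$ need not hold in general — here I should instead invoke that a single matrix $A^{-1}\tilde B$ always "commutes with itself," so the singleton set $\{A^{-1}\tilde B\}$ is a commuting set of diagonalizable matrices with real eigenvalues). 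Applying \cref{prop:sdc_characterization_invertible} with $S = A$ to the set $\{A,\tilde B\}$ gives that $\{A,\tilde B\}$ is SDC. Since $\epsilon>0$ was arbitrary, \cref{obs:asdc_finite} yields that $\{A,B\}$ is ASDC.

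I do not anticipate a genuine obstacle here, as the substantive work is entirely contained in \cref{lem:perturbed_canonical_form}; the only point requiring mild care is bookkeeping the application of \cref{prop:sdc_characterization_invertible} to a pair, observing that the nonsingular element $S=A$ already belongs to $\spann(\{A,\tilde B\})$ and that $S^{-1}\{A,\tilde B\} = \{I_n, A^{-1}\tilde B\}$ is automatically commuting once $A^{-1}\tilde B$ is diagonalizable with real spectrum.
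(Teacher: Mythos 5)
Your proof is correct and follows essentially the same route as the paper's: the forward direction is by continuity of eigenvalues, and the backward direction applies \cref{lem:perturbed_canonical_form} and then \cref{prop:sdc_characterization_invertible}, exactly as the paper does. One small slip worth fixing in the forward direction: you only consider perturbations $\tilde B$ of $B$ with $A$ held fixed, but the ASDC definition permits perturbing both matrices, so the argument should note that for $(\tilde A,\tilde B)$ close to $(A,B)$ the matrix $\tilde A$ remains invertible and $\tilde A^{-1}\tilde B$ is close to $A^{-1}B$, hence still has a non-real eigenvalue; also, your parenthetical claim that $A^{-1}\tilde B$ is "invertible" is neither needed nor necessarily true.
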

\begin{proof}
$(\Rightarrow)$ This direction holds trivially by continuity of eigenvalues and the assumption that $A$ is invertible.

$(\Leftarrow)$ Let $\epsilon>0$. Then, applying \cref{lem:perturbed_canonical_form} to $\set{A,B}$, we get $\tilde B$ such that $\norm{B-\tilde B}\leq \epsilon$ and $A^{-1}\tilde B$ is a matrix with real simple eigenvalues. We deduce by \cref{prop:sdc_characterization_invertible} that $\set{A,\tilde B}$ is SDC.
\end{proof}

\begin{corollary}
Let $\cA = \set{A_1,\dots,A_m}$ in \eqref{eq:intro_qcqp} and suppose $\spann(\cA) = \spann\set{A,B}$ where $A$ is invertible.
Furthermore, suppose $A^{-1}B$ has real eigenvalues.
Then for any $\epsilon>0$, there exist $\norm{\tilde A_i - A_i}\leq \epsilon$ such that
\begin{align}
\label{eq:asdc_nonsingular_QCQP}
\inf_{x\in\R^n}\set{x^\intercal \tilde A_1 x+ 2b_1^\intercal x + c_1:\, \begin{array}
        {l}
        x^\intercal \tilde A_i x+ 2b_i^\intercal x + c_i \,\boxempty_i\, 0,\,\forall i\in[2,m]\\
        x\in\cL
\end{array}}
\end{align}
is a diagonalizable QCQP. The matrices $\tilde A_i$ and the invertible matrix $P$ diagonalizing \eqref{eq:asdc_nonsingular_QCQP} can be computed via \cref{proc:simple_eigs}.
\end{corollary}

\subsection{The singular case}
In the remainder of this section, we investigate the ASDC property when $\set{A,B}$ is singular. We will show, surprisingly, that \emph{every} singular symmetric pair is ASDC. We begin with an example and some intuition.




\begin{example}
In contrast to the SDC property (cf.\ \cref{lem:sdc_closed_under_padding}), the ASDC property of a pair $\set{A,B}$ in the singular case does not reduce to the ASDC property of $\set{\bar A,\bar B}$, where $\bar A$ and $\bar B$ are the restrictions of $A$ and $B$ to the joint range of $A$ and $B$. For example, let
\begin{align*}
A = \begin{smallpmatrix}
         & 1 & \\
        1 &  &\\
         && 0
\end{smallpmatrix},\qquad
B = \begin{smallpmatrix}
        1 & & \\
        & -1 & \\
         &  & 0
\end{smallpmatrix},
\end{align*}
and let $\bar A$ and $\bar B$ denote the respective $2\by 2$ leading principal submatrices.

By \cref{thm:asdc_characterization_invertible}, $\set{\bar A,\bar B}$ is not ASDC (and in particular not SDC). On the other hand, we claim that $\set{A,B}$ is ASDC: For $\epsilon>0$, consider the matrices
\begin{align*}
\tilde A = \begin{smallpmatrix}
        & 1 & \\
        1 &&\\
        &&\epsilon
\end{smallpmatrix},\qquad
\tilde B = \begin{smallpmatrix}
        1 && \sqrt{\epsilon}\\
         & -1 & \sqrt{\epsilon}\\
        \sqrt{\epsilon} & \sqrt{\epsilon} & 0
\end{smallpmatrix}.
\end{align*}
A straightforward computation shows that $\tilde A^{-1} \tilde B$ has simple eigenvalues $\set{-1,0,1}$ whence $\set{\tilde A,\tilde B}$ is SDC.

The fact that $\set{\bar A, \bar B}$ is not SDC is equivalent to the statement: there does \emph{not} exist a basis $\set{p_1,p_2}\in\R^2$ such that the quadratic forms $x^\intercal\bar Ax$ and $x^\intercal\bar B x$ can be expressed as
\begin{align*}
x^\intercal \bar A x &= \alpha_1 \left(p_1^\intercal  x\right)^2 + \alpha_2 \left(p_2^\intercal x\right)^2,\quad \text{and}\\
x^\intercal \bar B x &= \beta_1 \left(p_1^\intercal  x\right)^2 + \beta_2 \left(p_2^\intercal x\right)^2,
\end{align*}
for some $\alpha_i,\beta_i\in\R$. On the other hand, the fact that $\set{\tilde A,\tilde B}$ is SDC shows that there exists a spanning set $\set{p_1,p_2,p_3}\subseteq\R^2$ and $\alpha_i,\beta_i\in\R$ such that
\begin{align*}
x^\intercal  \bar A x &= \alpha_1 \left(p_1^\intercal  x\right)^2 + \alpha_2 \left(p_2^\intercal x\right)^2 + \alpha_3 \left(p_3^\intercal x\right)^2,\quad \text{and}\\
x^\intercal  \bar B x &= \beta_1 \left(p_1^\intercal  x\right)^2 + \beta_2 \left(p_2^\intercal x\right)^2 + \beta_3 \left(p_3^\intercal x\right)^2.
\end{align*}
Intuitively, the ASDC property asks whether a set of quadratic forms can be (almost) diagonalized using $n$ (the ambient dimension)-many linear forms whereas the SDC property may be forced to use a smaller number of linear forms.\mathprog{\qed}
\end{example}

\begin{theorem}
\label{thm:asdc_singular}
Let $\set{A,B}\subseteq\S^n$. If $\set{A,B}$ is singular, then it is ASDC.
\end{theorem}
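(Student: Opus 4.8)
The plan is to reduce, via the canonical form of \cref{prop:canonical_form}, to handling one block type at a time, treating the ``singular'' blocks by an explicit perturbation. First note two easy structural facts: the ASDC property is closed under $*$-congruence and under direct sums (use block-diagonal approximants and transformations), and $\{0_d,0_d\}$ is trivially SDC. Since $\{A,B\}$ is singular, $A$ is singular, hence a singular max-rank element, so \cref{prop:canonical_form} applies, and singularity of $A$ forces the canonical form to contain a type-3 block or the zero block ($m_4=1$). Now I would peel off the blocks that are already ASDC in isolation: each type-1 block is ASDC by \cref{thm:asdc_characterization_invertible} (there $S_i^{-1}T_i=\lambda_iI_{n_i}+F_{n_i}G_{n_i}$ has the single real eigenvalue $\lambda_i$); and each type-3 block $\{S_i,T_i\}$ is ASDC because the pencil $S_i-\mu T_i$ satisfies $\det(S_i-\mu T_i)\equiv 0$, so by the rank-one update formula $\det\bigl(S_i-\mu(T_i+\eta e_1e_1^*)\bigr)=-\mu\eta\,(\operatorname{adj}(S_i-\mu T_i))_{11}$, which one checks is a nonzero monomial in $\mu$; hence $(T_i+\eta e_1e_1^*)^{-1}S_i$ is nilpotent, $\{S_i,T_i+\eta e_1e_1^*\}$ is ASDC by \cref{thm:asdc_characterization_invertible}, and letting $\eta\to 0$ shows $\{S_i,T_i\}$ is ASDC. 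After discarding these blocks, and all but one of the singular blocks, it remains to show that the type-2 part together with one ``helper'' block---a single zero coordinate $0_1$, or, when $m_4=0$, a single type-3 block---is ASDC; if there are no type-2 blocks we are already done.

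For the type-2 part I would first run the ``Jordan-breaking'' perturbation from the proof of \cref{lem:perturbed_canonical_form}: perturbing the $T$-blocks makes $\bar A^{-1}\bar B$ have simple (necessarily non-real) eigenvalues $\mu_1,\bar\mu_1,\dots,\mu_N,\bar\mu_N$, which we may take pairwise distinct, so by \cref{prop:canonical_form} again the type-2 part becomes $*$-congruent to $\bar A=\bigoplus_{k=1}^N\begin{smallpmatrix}0&1\\1&0\end{smallpmatrix}$ with $T$-part $\bar B=\bigoplus_k\begin{smallpmatrix}0&\mu_k\\\bar\mu_k&0\end{smallpmatrix}$. In the $0_1$-helper case, set $\tilde A=\bar A\oplus(\epsilon)$ and $\tilde B=\begin{smallpmatrix}\bar B & \sqrt\epsilon v\\ \sqrt\epsilon v^* & 0\end{smallpmatrix}$; a Schur-complement computation gives $\det(\tilde B-\mu\tilde A)=\pm\,\epsilon\Bigl(\mu\prod_k|\mu_k-\mu|^2+\sum_k(\alpha_k-\beta_k\mu)\prod_{k'\neq k}|\mu_{k'}-\mu|^2\Bigr)$, where $(\alpha_k,\beta_k)\in\R^2$ is free (it is $2\Re z_k$ and $2\Re(\mu_k z_k)$ up to order, with $z_k=v_{2k-1}^*v_{2k}$). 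The crucial point is that the $\R$-linear map $(\alpha_k,\beta_k)_{k}\mapsto\sum_k(\alpha_k-\beta_k\mu)\prod_{k'\neq k}|\mu_{k'}-\mu|^2$ is an isomorphism onto the real polynomials of degree $\le 2N-1$ (injectivity by evaluating at $\mu=\mu_j$ and using $\mu_j\notin\R$ together with distinctness of the $\mu_k,\bar\mu_k$)---this is the invertibility of a certain matrix built from the distinct $\mu_k$, of the kind that governs eigenvectors of arrowhead matrices. Consequently the bracketed polynomial can be made any monic real polynomial of degree $2N+1$ whose $\mu^{2N}$-coefficient equals the forced value $-2\sum_k\Re\mu_k$; choosing it with $2N+1$ distinct real roots (take roots summing to $2\sum_k\Re\mu_k$), $\tilde A^{-1}\tilde B$ has $2N+1$ distinct real eigenvalues, so $\{\tilde A,\tilde B\}$ is SDC by \cref{prop:sdc_characterization_invertible}, and letting $\epsilon\to0$ (with $v$ bounded) completes this case.

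The remaining case---a type-3 block of size $2n_j+1$ as helper, forced when $m_4=0$---is the main obstacle, and I would handle it by the same mechanism with more care. Here the ``weak direction'' is the middle coordinate $e_{\mathrm{mid}}$ of the type-3 block (on which $S_j$ vanishes): one inverts $\tilde A$ by adding $\epsilon\,e_{\mathrm{mid}}e_{\mathrm{mid}}^*$ to $S_j$, perturbs $T_j$ inside the block by $O(\epsilon)$ Hermitian terms (needed because $T_je_{\mathrm{mid}}\neq 0$ creates a spurious repeated eigenvalue at $0$), and couples $e_{\mathrm{mid}}$ to the type-2 part by an $O(\sqrt\epsilon)$ off-diagonal term. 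Taking the Schur complement with respect to the type-2 part again reduces $\det(\tilde B-\mu\tilde A)$, after clearing denominators, to a polynomial of degree $2n_j+1+2N$ depending affinely on the perturbation data; one then shows the associated linear map is still surjective onto the relevant polynomial space---combining the Vandermonde/Cauchy-type invertibility above with a small structured block contributed by the perturbed $T_j$ whose determinant is a nonzero multiple of $\prod_k|\mu_k|^2$ (again using $\mu_k\neq0$). Given surjectivity, one selects target data yielding distinct real roots, so $\tilde A^{-1}\tilde B$ is diagonalizable with real spectrum and $\{\tilde A,\tilde B\}$ is SDC; letting the perturbations vanish and re-assembling all blocks by direct sums then yields that $\{A,B\}$ is ASDC. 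The technical heart is exactly this surjectivity check---that enough freedom survives the Schur reduction---together with the elementary but fiddly identities for the type-3 blocks.
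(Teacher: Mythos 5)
Your overall reduction mirrors the paper's: pass to the Hermitian-pair canonical form of \cref{prop:canonical_form}; using closure of ASDC under congruence, direct sums, and limits, strip away the type-1 blocks (ASDC via \cref{thm:asdc_characterization_invertible}) and all but one singular block; then couple one ``weak'' coordinate to the remaining type-2 part and show, via the same arrowhead-determinant plus Vandermonde-type nondegeneracy argument, that the pencil can be given a real spectrum. Your treatment of the $0_1$-helper case and of an isolated type-3 block is correct and matches the paper's Cases~1 and~3 up to cosmetics (the paper perturbs $S_i$ by $\epsilon e_{\mathrm{mid}}e_{\mathrm{mid}}^*$ where you perturb $T_i$ by $\eta e_1e_1^*$; both give a nilpotent quotient).

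The gap is in the type-3-helper case (the paper's Case~2: $m\ge 2$, $m_3\ge 1$, $m_4=0$). There you insist on making $\tilde A_\epsilon^{-1}\tilde B_\epsilon$ have \emph{simple} eigenvalues: you perturb $T_j$ inside the type-3 block specifically to split the repeated eigenvalue at~$0$, and that extra perturbation is exactly what forces the unverified surjectivity claim you yourself flag as ``the technical heart.'' But you never need the perturbed pair to be SDC---only ASDC---and for a nonsingular pair \cref{thm:asdc_characterization_invertible} says ASDC is equivalent to the quotient having \emph{real} eigenvalues; simplicity and diagonalizability play no role. (You use precisely this observation in your isolated-type-3 argument, but do not carry it over here.) So the correct move, and what the paper does, is to leave $T_j$ untouched inside the type-3 block: add $\epsilon$ at the middle coordinate to invert $\tilde A_\epsilon$, couple only that coordinate to the type-2 part, and accept that $\det(\tilde B_\epsilon - \xi\tilde A_\epsilon)$ is $\xi^{2n_j}$ times exactly the Case-1 arrowhead polynomial. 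Choosing the arrowhead coefficients as in Case~1 makes that factor have real roots; the $\xi^{2n_j}$ factor merely contributes a repeated zero eigenvalue, which is harmless. Then $\{\tilde A_\epsilon,\tilde B_\epsilon\}$ is ASDC by \cref{thm:asdc_characterization_invertible}, and the limits-closure step you already use elsewhere finishes the proof. With this simplification the surjectivity check disappears and your argument coincides with the paper's.
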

\begin{proof}
We make simplifying assumptions:
Without loss of generality, we may assume that $A$ is a max-rank element of $\spann(\set{A,B})$ and $A = \Diag(S_1,\dots,S_m)$ and $B = \Diag(T_1,\dots,T_m)$ are in canonical form (\cref{prop:canonical_form}).
We may assume $m_1 = 0$ (else consider the submatrix of $A,B$ corresponding to the remaining blocks).
As $A$ is singular, we have $m_3 + m_4\geq 1$. In fact, we may assume $m_3 + m_4 = 1$ (else, perturb the submatrix of $A$ corresponding to the first $m - 1$ blocks so that $A$ is nonsingular on those blocks).
Similarly, if $m_4 = 1$, we may assume that $n_m = 1$.
Finally, assume $\Diag(S_1^{-1}T_1,\dots,S_{m-1}^{-1}T_{m-1})$ has simple eigenvalues (else apply \cref{proc:simple_eigs} to the first $m-1$ blocks).
For notational convenience, let $m_2 = k$.

After the above simplifying assumptions, there are three cases left to consider:
where $m\geq2$ and $m_4=1$,
where $m\geq 2$ and $m_3 = 1$, and
where $m = 1$.
In the first two cases, $A,B$ have the form
\begin{align}
\label{eq:asdc_singular_barA_barB}
A &= \left(\begin{array}
        {@{}c|c|c|c@{}}
        \begin{smallmatrix}
                &1\\1&
        \end{smallmatrix} &&& \\\hline
        & \ddots && \\\hline
        && \begin{smallmatrix}
                &1\\1&
        \end{smallmatrix}&\\\hline
        &&&S_m
\end{array}\right),\qquad
B = \left(\begin{array}
        {@{}c|c|c|c@{}}
        \begin{smallmatrix}
                \Im(\lambda_1)&\Re(\lambda_1)\\\Re(\lambda_1)&-\Im(\lambda_1)
        \end{smallmatrix} && &\\\hline
        & \ddots && \\\hline
        && \begin{smallmatrix}
                \Im(\lambda_k)&\Re(\lambda_k)\\\Re(\lambda_k)&-\Im(\lambda_k)
        \end{smallmatrix}&\\\hline
        &&&T_m
\end{array}\right)
\end{align}
where $\lambda_1,\lambda_1^*,\dots,\lambda_k,\lambda_k^*\in\C\setminus\R$ are distinct.

\paragraph{Case 1.}
In case 1, $S_m = T_m = 0_1$. Set
\begin{align}
\label{eq:perturbed_A_B_case_1}
\tilde A_\delta &= \left(\begin{array}
        {@{}c|c|c|c@{}}
        \begin{smallmatrix}
                &1\\1&
        \end{smallmatrix} &&& \\\hline
        & \ddots && \\\hline
        && \begin{smallmatrix}
                &1\\1&
        \end{smallmatrix}&\\\hline
        &&&\delta
\end{array}\right),\,
\tilde B_\delta = \left(\begin{array}
        {@{}c|c|c|c@{}}
        \begin{smallmatrix}
                \Im(\lambda_1)&\Re(\lambda_1)\\\Re(\lambda_1)&-\Im(\lambda_1)
        \end{smallmatrix} &&& \begin{smallmatrix}
                \sqrt{\delta}\Re(\alpha_1)\\
                \sqrt{\delta}\Im(\alpha_1)
        \end{smallmatrix}\\\hline
        & \ddots && \vdots \\\hline
        && \begin{smallmatrix}
                \Im(\lambda_k)&\Re(\lambda_k)\\\Re(\lambda_k)&-\Im(\lambda_k)
        \end{smallmatrix}&\begin{smallmatrix}
                \sqrt{\delta}\Re(\alpha_k)\\
                \sqrt{\delta}\Im(\alpha_k)
        \end{smallmatrix}\\\hline
        \begin{smallmatrix}
                \sqrt{\delta}\Re(\alpha_1) & \sqrt{\delta}\Im(\alpha_1)
        \end{smallmatrix}&\cdots&\begin{smallmatrix}
                \sqrt{\delta}\Re(\alpha_k) & \sqrt{\delta}\Im(\alpha_k)
        \end{smallmatrix}&\delta z
\end{array}\right)
\end{align}
for some $\alpha\in\C^k$, $z\in\R$, and $\delta>0$ to be chosen later.
The eigenvalues of $\tilde A_\delta^{-1}\tilde B_\delta$ are equal to the eigenvalues of
\begin{align*}
&\left(\begin{array}
        {@{}c|c|c|c@{}}
        J &&&\\\hline
        & \ddots && \\\hline
        && J&\\\hline
        &&&\tfrac{1}{\sqrt{\delta}}
\end{array}\right)^{-1}\tilde A^{-1}_\delta \tilde B_\delta \left(\begin{array}
        {@{}c|c|c|c@{}}
        J &&&\\\hline
        & \ddots && \\\hline
        && J&\\\hline
        &&& \tfrac{1}{\sqrt{\delta}}
\end{array}\right)\\
&\qquad= \left(\begin{array}
        {@{}c|c|c|c@{}}
        \begin{smallmatrix}
                \lambda_1 & \\
                 & \lambda_1^*
        \end{smallmatrix} &&& \begin{smallmatrix}
                \alpha_1^*/\sqrt{2}\\\alpha_1/\sqrt{2}
        \end{smallmatrix}\\\hline
        & \ddots && \vdots \\\hline
        && \begin{smallmatrix}
                \lambda_k & \\
                 & \lambda_k^*
        \end{smallmatrix}&\begin{smallmatrix}
                \alpha_k^*/\sqrt{2}\\\alpha_k/\sqrt{2}
        \end{smallmatrix}\\\hline
        \begin{smallmatrix}
                -\alpha_1^*\imag^*/\sqrt{2} & -\alpha_1\imag/\sqrt{2}
        \end{smallmatrix}&\cdots&\begin{smallmatrix}
                -\alpha_k^*\imag^*/\sqrt{2} & -\alpha_k\imag/\sqrt{2}
        \end{smallmatrix}& z
\end{array}\right).
\end{align*}
The characteristic polynomial (in $\xi$)  of this latter matrix is
\begin{gather}
\label{eq:case_1_char_poly}
(z-\xi)\prod_{i=1}^k(\lambda_i-\xi)(\lambda_i^* -\xi)+ \sum_{i=1}^k \left(\Im\left(\alpha_i^2\right)\xi - \Im\left(\alpha_i^2\lambda_i\right)\right)\prod_{j\neq i} (\lambda_j - \xi)(\lambda_j^* - \xi)
\end{gather}
and is independent of $\delta>0$.
As $\lambda_i$ are all non-real, given any $x,y\in\R^k$, it is possible to construct $\alpha\in\C^k$ such that
\begin{align}
\label{eq:alpha_to_x_y}
\Im(\alpha_i^2) = y_i\quad\text{and}\quad-\Im(\alpha_i^2\lambda_i) = x_i,\quad\forall i\in[k].
\end{align}
Setting $\alpha$ in this manner reduces the characteristic polynomial to
\begin{align}
\label{eq:case_1_char_poly_xy}
(z-\xi)\prod_{i=1}^k(\lambda_i-\xi)(\lambda_i^* -\xi) + \sum_{i=1}^k \left(x_i + y_i \xi \right)\prod_{j\neq i} (\lambda_j - \xi)(\lambda_j^* - \xi).
\end{align}
It suffices to show that there exist $x,y\in\R^k$ and $z\in\R$ such that the roots of \eqref{eq:case_1_char_poly_xy} are all real, as we may take $\delta>0$ to zero independently of our choice of $x,y,z$.

Define the following polynomials.
\begin{gather*}
f_i(\xi) \coloneqq \prod_{j\neq i} (\lambda_j - \xi)(\lambda_j^* - \xi),\quad
g_i(\xi) \coloneqq \xi f_i(\xi),\,\forall i\in[k],\quad\text{and}\\
h(\xi) \coloneqq \prod_{i=1}^k(\lambda_i-\xi)(\lambda_i^* -\xi).
\end{gather*}
As $\set{\lambda_1,\lambda_1^*,\dots,\lambda_k,\lambda_k^*}$ are distinct values in $\C\setminus\R$, we have that $\set{f_1,g_1,\dots,f_k,g_k, h}$ are a basis for the degree-$2k$ polynomials in $\xi$.
Now pick $2k + 1$ distinct values $\xi_1,\dots,\xi_{2k+1}\in\R$. Note that $\set{\xi_1,\dots,\xi_{2k+1}}$ are the roots to \eqref{eq:case_1_char_poly_xy} if and only if $x,y\in\R^n$ and $z\in\R$ satisfy
\begin{align}
\label{eq:f_matrix_system}
\begin{smallpmatrix}
        \begin{smallmatrix}
        f_1(\xi_1) & g_1(\xi_1) & \cdots & f_k(\xi_1) & g_k(\xi_1)\\
        \vdots & \vdots & \ddots & \vdots& \vdots\\
        f_1(\xi_{2k+1}) & g_1(\xi_{2k+1}) & \cdots & f_k(\xi_{2k+1}) & g_k(\xi_{2k+1})
        \end{smallmatrix} &
        \begin{smallmatrix}
                h(\xi_1)\\
                \vdots\\
                h(\xi_{2k+1})
        \end{smallmatrix}
\end{smallpmatrix}
\begin{smallpmatrix}
        x_1\\
        y_1\\
        \vdots\\
        x_k\\
        y_k\\
        z
\end{smallpmatrix}
=
\begin{smallpmatrix}
\xi_1 h(\xi_1)\\
\vdots\\
\xi_{2k+1} h(\xi_{2k+1})
\end{smallpmatrix}.
\end{align}
Note that the matrix on the left is invertible (as $\set{f_1,g_1,\dots,f_k,g_k,h}$ is independent and the $\xi_i$ are distinct) and real (as the $\xi_i$ are real). Consequently, the matrix on the left has a real inverse. Note also that the vector on the right is real. We deduce that there exist $x,y\in\R^k$ (and thus $\alpha\in\C^k$) and $z\in\R$ such that the eigenvalues of $\tilde A_\delta^{-1}\tilde B_\delta$ are real and simple.

\paragraph{Case 2.}
In case 2, $S_m = \begin{smallpmatrix}
        &&F_{n_m}\\
        &0&\\
        F_{n_m}&&
\end{smallpmatrix}$ and $T_m = G_{2n_m +1}$. Set
\begin{gather}
\label{eq:perturbed_A_B_case_2}
\tilde A_\delta = \left(\begin{array}
        {@{}c|c|c|c|c|c@{}}
        \begin{smallmatrix}
                &1\\1&
        \end{smallmatrix} &&&&& \\\hline
        & \ddots &&&& \\\hline
        && \begin{smallmatrix}
                &1\\1&
        \end{smallmatrix}&&&\\\hline
        &&&&&F_{n_m}\\\hline
        &&&&\delta&\\\hline
        &&&F_{n_m}&&
\end{array}\right),\text{ and}\nonumber\\
\tilde B_\delta = \left(\begin{array}
        {@{}c|c|c|c|c|c@{}}
        \begin{smallmatrix}
                \Im(\lambda_1)&\Re(\lambda_1)\\\Re(\lambda_1)& -\Im(\lambda_1)
        \end{smallmatrix} &&&& \begin{smallmatrix}
                \sqrt{\delta}\Re(\alpha_1)\\\sqrt{\delta}\Im(\alpha_1)
        \end{smallmatrix}&\\\hline
        & \ddots && &\vdots & \\\hline
        && \begin{smallmatrix}
                \Im(\lambda_k)&\Re(\lambda_k)\\\Re(\lambda_k)& -\Im(\lambda_k)
        \end{smallmatrix}&&\begin{smallmatrix}
                \sqrt{\delta} \Re(\alpha_k)\\\sqrt{\delta}\Im(\alpha_k)
        \end{smallmatrix}&\\\hline
        &&&&&G_{n_m}\\\hline
        \begin{smallmatrix}
                \sqrt{\delta}\Re(\alpha_1) & \sqrt{\delta}\Im(\alpha_1)
        \end{smallmatrix}&\cdots&\begin{smallmatrix}
                \sqrt{\delta}\Re(\alpha_k) & \sqrt{\delta}\Im(\alpha_k)
        \end{smallmatrix}&&\delta z& e_1^*\\\hline
        &&&G_{n_m}&e_1
\end{array}\right)
\end{gather}
for some $\alpha\in\C^k$, $z\in\R$, and $\delta>0$ to be chosen later.
The eigenvalues of $\tilde A^{-1}_\delta \tilde B_\delta$ are equal to the eigenvalues of
\begin{align*}
&\left(\begin{array}
        {@{}c|c|c|c@{}}
        J &&&\\\hline
        & \ddots && \\\hline
        && J&\\\hline
        &&&\begin{smallmatrix}
                I_{n_m}/\sqrt{\delta} &&\\
                & 1/\sqrt{\delta}&\\
                && \sqrt{\delta} I_{n_m}
        \end{smallmatrix}
\end{array}\right)^{-1}\tilde A^{-1}_\delta \tilde B_\delta \left(\begin{array}
        {@{}c|c|c|c@{}}
        J &&&\\\hline
        & \ddots && \\\hline
        && J&\\\hline
        &&& \begin{smallmatrix}
                I_{n_m}/\sqrt{\delta} &&\\
                & 1/\sqrt{\delta}&\\
                && \sqrt{\delta} I_{n_m}
        \end{smallmatrix}
\end{array}\right)\\
&\qquad= \left(\begin{array}
        {@{}c|c|c|c|c|c@{}}
        \begin{smallmatrix}
                \lambda_1&\\&\lambda_1^*
        \end{smallmatrix} &&&& \begin{smallmatrix}
                \alpha_1^*/\sqrt{2} \\
                \alpha_1/\sqrt{2}
        \end{smallmatrix}&\\\hline
        & \ddots && &\vdots & \\\hline
        && \begin{smallmatrix}
                \lambda_k&\\&\lambda_k^*
        \end{smallmatrix}&&\begin{smallmatrix}
                \alpha_k^*/\sqrt{2} \\\alpha_k/\sqrt{2}
        \end{smallmatrix}&\\\hline
        &&&F_{n_m}G_{n_m}&e_{n_m}&\\\hline
        \begin{smallmatrix}
                -(\alpha_1\imag)^*/\sqrt{2} & -\alpha_1\imag/\sqrt{2}
        \end{smallmatrix}&\cdots&\begin{smallmatrix}
                -(\alpha_k\imag)^*/\sqrt{2} & -\alpha_k\imag/\sqrt{2}
        \end{smallmatrix}&&z& e_1^\intercal\\\hline
        &&&&&F_{n_m}G_{n_m}
\end{array}\right).
\end{align*}
The characteristic polynomial (in $\xi$) of this latter matrix is
\begin{align}
\label{eq:case_2_char_poly}
\xi^{2n_m}&\bigg((z-\xi)\prod_{i=1}^k(\lambda_i-\xi)(\lambda_i^* -\xi)\nonumber\\
&\qquad+ \sum_{i=1}^k \left(\Im(\alpha_i^2)\xi - \Im(\alpha_i^2\lambda_i)\right)\prod_{j\neq i} (\lambda_j - \xi)(\lambda_j^* - \xi)\bigg)
\end{align}
and is independent of $\delta>0$. As in Case 1 (cf.\ \eqref{eq:case_1_char_poly}), we may pick $\alpha\in\C^k$ and $z\in\R$ such that $\tilde A_{\delta}^{-1}\tilde B_\delta$ has real (but no longer necessarily simple) eigenvalues. Finally, applying \cref{thm:asdc_characterization_invertible}, we deduce that for all $\delta>0$, $\set{\tilde A_{\delta},\tilde B_{\delta}}$ is ASDC. We conclude that $\set{A, B}$ is ASDC.

\paragraph{Case 3.} In the final case, we have that $m= m_3+m_4=1$. If $m_4=1$ (so that $A=B=0$), it is clear that $\set{A,B}$ is actually SDC. Finally, suppose $m_3=1$ so that
\begin{align*}
A = \begin{smallpmatrix}
        &&F_{n_m}\\
        &0&\\
        F_{n_m}&&
\end{smallpmatrix},\qquad
B = G_{2n_m+1}.
\end{align*}
Then for $\delta\neq 0$, set
\begin{align*}
\tilde A_\delta = \begin{smallpmatrix}
        &&F_{n_m}\\
        &\delta&\\
        F_{n_m}&&
\end{smallpmatrix}.
\end{align*}
Note that $\tilde A^{-1}B$ is upper triangular with all diagonal entries equal to zero. Then applying \cref{thm:asdc_characterization_invertible}, we deduce that for all $\delta\neq 0$, $\set{\tilde A_\delta,B}$ is ASDC. We conclude that $\set{A, B}$ is ASDC.
\end{proof}

\begin{corollary}
Let $\cA = \set{A_1,\dots,A_m}$ in \eqref{eq:intro_qcqp} and suppose $\spann(\cA) = \spann\set{A,B}$ is singular. Then for any $\epsilon>0$, there exist $\norm{\tilde A_i - A_i}\leq \epsilon$ such that
\begin{align}
\label{eq:asdc_singular_QCQP}
\inf_{x\in\R^n}\set{x^\intercal \tilde A_1 x+ 2b_1^\intercal x + c_1:\, \begin{array}
        {l}
        x^\intercal \tilde A_i x+ 2b_i^\intercal x + c_i \,\boxempty_i\, 0,\,\forall i\in[2,m]\\
        x\in\cL
\end{array}}
\end{align}
is a diagonalizable QCQP. The matrices $\tilde A_i$ and an invertible matrix $P$ diagonalizing \eqref{eq:asdc_singular_QCQP} can be computed via the construction in \cref{thm:asdc_singular}.
\end{corollary}

\section{The ASDC property of nonsingular symmetric triples}
\label{sec:asdc_triples}
In this section, we will prove the following characterization of the ASDC property for nonsingular triples of symmetric matrices (henceforth, \emph{symmetric triples}).

\begin{theorem}
\label{thm:asdc_nonsingular_triple}
Let $\set{A,B,C}\subseteq\S^n$ and suppose $A$ is invertible. Then, $\set{A,B,C}$ is ASDC if and only if $\set{A^{-1}B, A^{-1}C}$ are a pair of commuting matrices with real eigenvalues.
\end{theorem}

As always, the forward direction follows trivially from \cref{prop:sdc_characterization_invertible} and continuity. For the reverse direction, we will extend an inductive argument due to \citet{motzkin1955pairs} to show that we may repeatedly perturb either $A^{-1}B$ or $A^{-1}C$ to increase the number of simple eigenvalues. In contrast to the original argument in \cite{motzkin1955pairs}, which establishes that any commuting pair $\set{S,T}\subseteq\C^{n\by n}$ is almost simultaneously diagonalizable \emph{via similarity} (and thus only needs to inductively maintain commutativity of $S$ and $T$), for our proof we will further need to maintain that $A,B,C$ are symmetric matrices and that $A^{-1}B$ and $A^{-1}C$ have real eigenvalues.

Our proof will require two technical facts about block matrices consisting of upper triangular Toeplitz blocks. We present these facts below and defer their proofs to \cref{sec:upper_tri_toeplitz}.

\begin{definition}
$T\in\R^{n_i\by n_j}$ is an \emph{upper triangular Toeplitz matrix} if $T$ is of the form
\begin{align*}
T = \left(\begin{array}
	{@{}c|c@{}}
0_{n_i \times (n_j - n_i)}
&
\begin{smallmatrix}
t^{(1)} & t^{(2)} & \cdots & t^{(n_i)}\\
& t^{(1)} & \ddots & \vdots\\
& & \ddots & t^{(2)}\\
& & & t^{(1)}
\end{smallmatrix}
\end{array}\right)
\quad \text{ or } \quad
T = \left(\begin{array}{@{}c@{}}
	\begin{smallmatrix}
	t^{(1)} & t^{(2)} & \cdots & t^{(n_i)}\\
	& t^{(1)} & \ddots & \vdots\\
	& & \ddots & t^{(2)}\\
	& & & t^{(1)}
	\end{smallmatrix} \\\hline
	0_{(n_i - n_j) \times n_j}
\end{array}\right)
\end{align*}
if $n_i \leq n_j$ and $n_j \leq n_i$ respectively. \mathprog{\qed}
\end{definition}

\begin{definition}
Let $(n_1, \dots, n_k)$ such that $\sum_i n_i = n$. Let $\T(n_1,\dots,n_k)\subseteq\R^{n\by n}$ denote the linear subspace of matrices $T$ such that each block $T_{i,j}$ (when the rows and columns of $T$ are partitioned according to $(n_1,\dots,n_k)$) is an upper triangular Toeplitz matrix. When the partition $(n_1,\dots,n_k)$ is clear from context, we will simply write $\T$. \mathprog{\qed}
\end{definition}

The following fact characterizes the set of matrices which commute with a nilpotent Jordan chain (see for example \cite[Theorem 6]{suprunenko1968commutative}).
\begin{lemma}
\label{lem:T_iff_commuting}
Let $(n_1, \dots, n_k)$ such that $\sum_i n_i = n$. Let $\cJ\in\R^{n\by n}$ be a block diagonal matrix with diagonal block $\cJ_{i,i} = F_{n_i}G_{n_i}$, i.e., a nilpotent Jordan block of size $n_i$. Then, $T\in\R^{n\by n}$ commutes with $\cJ$ if and only if $T\in\T$.
\end{lemma}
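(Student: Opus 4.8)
The statement to prove, \cref{lem:T_iff_commuting}, characterizes matrices commuting with a block-diagonal nilpotent Jordan matrix $J$ whose blocks are $J_{i,i} = F_{n_i}G_{n_i}$ (nilpotent Jordan blocks of sizes $n_1,\dots,n_k$). The plan is to reduce everything to a block-by-block computation. Writing $T = (T_{i,j})$ in the partition $(n_1,\dots,n_k)$, the commutator $[T,J] = TJ - JT$ has $(i,j)$-block equal to $T_{i,j}N_{n_j} - N_{n_i}T_{i,j}$, where $N_{n} := F_{n}G_{n}$ is the single $n\times n$ nilpotent Jordan block (the shift acting on coordinates). So $T$ commutes with $J$ if and only if each block $T_{i,j}$ intertwines the shifts: $T_{i,j}N_{n_j} = N_{n_i}T_{i,j}$.

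\begin{proof}
Partition $T = (T_{i,j})_{i,j\in[k]}$ according to $(n_1,\dots,n_k)$, and for $n\in\N$ write $N_n := F_nG_n \in \C^{n\by n}$ for the nilpotent Jordan block of size $n$ with eigenvalue zero, so that $J = \Diag(N_{n_1},\dots,N_{n_k})$. Since $J$ is block diagonal, the $(i,j)$-block of the commutator $[T,J] = TJ - JT$ is exactly $T_{i,j}N_{n_j} - N_{n_i}T_{i,j}$. Hence $T$ commutes with $J$ if and only if
\begin{align}
\label{eq:intertwine}
N_{n_i}T_{i,j} = T_{i,j}N_{n_j}\qquad\text{for all }i,j\in[k].
\end{align}
It therefore suffices to show that, for any $p,q\in\N$, a matrix $M\in\C^{p\by q}$ satisfies $N_pM = MN_q$ if and only if $M$ is an upper triangular Toeplitz matrix in the sense of the definition above.

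Recall that $N_n$ acts on the standard basis by $N_ne_1 = 0$ and $N_ne_\ell = e_{\ell-1}$ for $\ell\in[2,n]$; equivalently, $(N_n)_{a,b} = 1$ exactly when $b = a+1$. Comparing entries, the condition $N_pM = MN_q$ reads $M_{a+1,b} = M_{a,b-1}$ for all valid indices, with the boundary conventions that $M_{a+1,b}$ is interpreted as $0$ when $a = p$ and $M_{a,b-1}$ as $0$ when $b = 1$. The bulk relation $M_{a,b} = M_{a+1,b+1}$ says precisely that $M$ is constant along diagonals, i.e.\ Toeplitz: $M_{a,b}$ depends only on $b - a$, say $M_{a,b} = m^{(b-a)}$. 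The boundary conditions then force $m^{(s)} = 0$ for every diagonal index $s$ corresponding to a diagonal that hits the bottom edge but not the right edge (when $p \geq q$) or the left edge but not the top edge (when $p \leq q$); unwinding, this is exactly the statement that all strictly-lower entries vanish and that the nonzero diagonals are the ``top-right'' ones, which is the block form in the definition. Conversely, any matrix of that form visibly satisfies the Toeplitz relation and the boundary conditions, hence $N_pM = MN_q$. Substituting back into \eqref{eq:intertwine} gives that $T$ commutes with $J$ if and only if every block $T_{i,j}$ is upper triangular Toeplitz, i.e.\ $T\in\T(n_1,\dots,n_k)$.
\end{proof}

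The only mildly delicate point is bookkeeping the boundary conditions in the rectangular case $p\neq q$ to confirm that the surviving entries are precisely the ones in the claimed upper-triangular-Toeplitz pattern; this is the step I would write out most carefully, though it is entirely routine index-chasing and carries no real obstacle.
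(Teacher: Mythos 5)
Your overall strategy is correct and is the standard one; note that the paper itself does not prove this lemma but defers to \citet[Theorem~6]{suprunenko1968commutative}, so you are supplying an argument where none is given. The reduction of $[T,J]=0$ to the blockwise intertwining relation $N_{n_i}T_{i,j}=T_{i,j}N_{n_j}$, and then to the entrywise recurrence $M_{a+1,b}=M_{a,b-1}$ with zero boundary conventions, is exactly right: the bulk relation forces $M$ to be Toeplitz and the boundary relations annihilate a contiguous band of diagonals.

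The one slip is in the step you yourself flag as delicate. Writing $m^{(s)}$ for the common value on diagonal $s=b-a$, the first-column constraint $M_{a,1}=0$ for $a\in[2,p]$ kills $s\in[1-p,\,-1]$, while the bottom-row constraint $M_{p,b}=0$ for $b\in[1,q-1]$ kills $s\in[1-p,\,q-1-p]$; together the zero set is $[1-p,\ \max(-1,\,q-1-p)]$. When $p\geq q$ this equals $[1-p,-1]$, which is the set of diagonals meeting the \emph{left} edge but not the \emph{top} edge; when $p\leq q$ it equals $[1-p,\,q-1-p]$, the diagonals meeting the \emph{bottom} edge but not the \emph{right} edge. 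Your text has these two cases interchanged. The sentence you end with (the nonzero diagonals are the ``top-right'' ones, giving the block form in the definition) is still correct, but the stated edge criterion would, followed literally, fail to zero out some required diagonals: for instance with $p=4,\,q=2$, the diagonal $s=-1$ through $(2,1)$ and $(3,2)$ must vanish yet touches neither row $4$ nor column $2$. Since this is exactly the bookkeeping you deferred, write out the two boundary conditions separately and take their union as above rather than reasoning geometrically by edges.
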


\begin{definition}
Let $(n_1,\dots,n_k)$ such that $\sum_i n_i = n$. Define the linear map $\Pi_{(n_1,\dots,n_k)}: \T(n_1,\dots,n_k) \to \R^{k \by k}$ by
\begin{align*}
(\Pi_{(n_1,\dots,n_k)}(T))_{i,j} = \begin{cases}
	T_{i,j}^{(1)} & \text{if } n_i = n_j,\\
	0 & \text{else}.
\end{cases}
\end{align*}
When the partition $(n_1,\dots,n_k)$ is clear from context, we will simply write $\Pi$.\mathprog{\qed}
\end{definition}

The following fact follows from the observation that the characteristic polynomial of a matrix $T\in\T$ depends on only a few of its entries (see \cref{lem:toeplitz_main_diagonals}).

\begin{restatable}{lemma}{toeplitzmapping}\label{lem:toeplitz_mapping}
Let $(n_1,\dots,n_k)$ such that $\sum_i n_i = n$. Then, for any $T\in \T$, the matrices $T\in\R^{n\by n}$ and $\Pi(T)\in\R^{k\by k}$ have the same eigenvalues.
\end{restatable}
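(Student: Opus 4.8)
The plan is to show that for $T\in\T(n_1,\dots,n_k)$, the characteristic polynomial $\det(\xi I - T)$ depends only on the ``constant'' entries $T_{i,j}^{(1)}$ coming from blocks with $n_i=n_j$, and equals the characteristic polynomial of $\Pi(T)$. First I would reduce to a combinatorial statement about the Leibniz expansion of $\det(\xi I - T)$. Partition the index set $[n]$ into the consecutive intervals $I_1,\dots,I_k$ of sizes $n_1,\dots,n_k$, and within each interval index the coordinates by their ``depth'' $1,\dots,n_i$; the key structural fact (to be recorded as \cref{lem:toeplitz_main_diagonals}) is that each block $T_{i,j}$ of an element of $\T$ is constant along diagonals, with the entry at row of depth $a$ (in block $i$) and column of depth $b$ (in block $j$) depending only on $b-a$ and vanishing unless $b-a\geq n_i-n_j$ (equivalently $b\ge a$ when $n_i=n_j$). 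In particular every such matrix is block-upper-triangular in the refined ordering that sorts coordinates primarily by depth, so one can think of $T$ as a sum of a strictly-upper-triangular (nilpotent) part and a block-diagonal-in-depth part whose diagonal blocks are governed exactly by the constant entries $T_{i,j}^{(1)}$.

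The main step is then a careful bookkeeping of which monomials in the entries of $T$ can survive in $\det(\xi I - T)$. I would argue as follows: in any nonzero term of the Leibniz sum $\sum_\pi \operatorname{sgn}(\pi)\prod_\ell ((\xi I-T)_{\ell,\pi(\ell)})$, the permutation $\pi$ must satisfy $\operatorname{depth}(\pi(\ell))\ge \operatorname{depth}(\ell)$ whenever $\ell\ne\pi(\ell)$ lie in blocks $i,j$ with $n_i=n_j$, and more generally the Toeplitz support constraint $\operatorname{depth}(\pi(\ell))-\operatorname{depth}(\ell)\ge n_{i(\ell)}-n_{j(\ell)}$; summing these inequalities around each cycle of $\pi$ and using that the depth-differences telescope to $0$ forces every cycle to stay within a single ``depth level'' and in fact to involve only blocks of equal size. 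This collapses the determinant to a product over depth-levels, and on each level the surviving contribution is precisely $\det$ of the matrix with $(i,j)$-entry $\xi\,[i=j]-T_{i,j}^{(1)}$ restricted to the blocks of that common size; multiplying over all sizes and all depths gives $\det(\xi I-\Pi(T))$ raised to the appropriate multiplicities, matching $\det(\xi I-T)$. I would make this precise by grouping the block indices according to the value of $n_i$: if $\{1,\dots,k\}=\bigsqcup_s K_s$ with $n_i$ equal to some common value $\nu_s$ for $i\in K_s$, then $\det(\xi I-T)=\prod_s \det(\xi I-\Pi(T)|_{K_s})^{\nu_s}$, and the same identity holds for $\Pi(T)$ since $\Pi(T)$ is block-diagonal across the groups $K_s$; hence $T$ and $\Pi(T)$ have the same characteristic polynomial, and therefore the same eigenvalues.

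The hard part will be the bookkeeping in the Leibniz expansion: making the ``depths are non-decreasing along non-fixed points, hence constant along cycles'' argument airtight, including the edge cases where $\pi(\ell)$ and $\ell$ lie in blocks of different sizes (so the relevant inequality $\operatorname{depth}(\pi(\ell))-\operatorname{depth}(\ell)\ge n_i-n_j$ can be negative and one must check it still cannot be part of a surviving cycle unless all blocks in the cycle have the same size). A clean way to handle this is to invoke \cref{lem:toeplitz_main_diagonals} to first pass to the refined depth ordering, observe that in this ordering $T$ is block upper triangular with diagonal blocks indexed by depth, so $\det(\xi I-T)=\prod_{d}\det(\xi I-T^{(d)})$ where $T^{(d)}$ is the depth-$d$ diagonal block; one then checks directly that $T^{(d)}$ is (a principal submatrix of) $\Pi(T)$, namely the submatrix on the block indices $i$ with $n_i\ge d$, and that its $(i,j)$ entry is $T_{i,j}^{(1)}$ for $n_i=n_j$ and $0$ otherwise --- which is exactly the depth-$d$ diagonal block of $\Pi(T)$ under the same refined ordering applied to $\Pi(T)$. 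Since this accounting is identical for $T$ and for $\Pi(T)$, the characteristic polynomials agree. I expect the remaining computations (verifying the explicit form of the diagonal blocks, and the sign/index bookkeeping) to be routine once \cref{lem:toeplitz_main_diagonals} is in hand.
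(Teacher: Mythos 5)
Your overall plan --- reduce to the combinatorial fact recorded as Lemma~\ref{lem:toeplitz_main_diagonals} and then compare the surviving structure against $\Pi(T)$ --- is the right one, and the ``depth-ordering'' idea at the end is a valid variant of what the paper does. However, two specific claims in your writeup are wrong and need repair.

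First, your stated Toeplitz support constraint has the wrong sign. With depth measured from the top of each block, the $(a,b)$ entry of $T_{i,j}$ is nonzero only when $b-a\geq \max\{0, n_j-n_i\}$ (note $n_j-n_i$, not $n_i-n_j$); with depth measured from the bottom one instead gets $\ell_a-\ell_b\geq\max\{0,n_i-n_j\}$. With $b-a\geq n_i-n_j$ as you wrote, the right-hand sides could be positive or negative, so ``summing around a cycle and telescoping'' gives $0\geq 0$ and no information. To make the telescope argument bite, you need a quantity that is \emph{always} nonincreasing along nonzero entries. The paper achieves this with the single weight $w_a=\ell_a-n_{i_a}/2$ (Lemma~\ref{lem:toeplitz_main_diagonals}'s proof), which satisfies $w_a-w_b\geq 0$ on the support with equality iff $n_{i_a}=n_{i_b}$ and $\ell_a=\ell_b$; alternatively you can run a two-stage argument (depths are constant along cycles, and at constant depth the block sizes must be weakly decreasing, hence constant around a cycle). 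As written your argument does neither.

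Second, the description of the depth-$d$ diagonal block $T^{(d)}$ in your ``clean'' route is incorrect. You claim $T^{(d)}_{i,j}$ equals $T_{i,j}^{(1)}$ when $n_i=n_j$ and $0$ otherwise, i.e.\ that $T^{(d)}$ literally equals a principal submatrix of $\Pi(T)$. In fact the Toeplitz structure puts $t_{i,j}^{(1)}$ on the diagonal of $T_{i,j}$ whenever $n_i\geq n_j$ (and the depth is $\leq n_j$), so $T^{(d)}_{i,j}=T_{i,j}^{(1)}$ for all pairs with $n_i\geq n_j\geq d$, not only for $n_i=n_j$. The correct statement is weaker but still sufficient: after sorting block indices by size, $T^{(d)}$ is block lower triangular with the same diagonal blocks $\tilde T_s$ ($s\geq d$) as the corresponding restriction of $\Pi(T)$, hence the same characteristic polynomial. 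Once these two points are fixed, your factorization $\det(\xi I-T)=\prod_d\det(\xi I-T^{(d)})=\prod_s\det(\xi I-\tilde T_s)^s$ does recover the conclusion, with $\Pi(T)$ having characteristic polynomial $\prod_s\det(\xi I-\tilde T_s)$. For comparison, the paper's proof takes a slightly more economical route: it invokes Lemma~\ref{lem:toeplitz_main_diagonals} to replace $T$ by the (isospectral) $\hat T\in\T$ that zeroes out every $T_{i,j}^{(\ell)}$ except those with $n_i=n_j$ and $\ell=1$, and then observes that after grouping by block size $\hat T=\Diag(\tilde T_1\otimes I_{s_1},\dots,\tilde T_m\otimes I_{s_m})$, which has the same spectrum as $\Pi(T)=\Diag(\tilde T_1,\dots,\tilde T_m)$. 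So the paper decomposes by block size (getting a block-diagonal matrix with Kronecker factors), whereas you decompose by depth (getting a block upper triangular matrix); both work, and the paper's is shorter once Lemma~\ref{lem:toeplitz_main_diagonals} is available off the shelf.
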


We are now ready to prove \cref{thm:asdc_nonsingular_triple}.

\begin{proof}[Proof of \cref{thm:asdc_nonsingular_triple}]
It suffices to show that if $\set{A^{-1}B,A^{-1}C}$ are a pair of commuting matrices with real eigenvalues, then $\set{A,B,C}$ is ASDC.
Note that any set $\set{A,B,C}\subseteq\S^1$ is SDC. Thus, we may assume that $n\geq 2$ and that the statement is true inductively for all smaller $n$.

We make the following simplifying assumptions:
Without loss of generality, we may assume that either $A^{-1}B$ has multiple eigenvalues or that $A^{-1}B$ and $A^{-1}C$ are both nilpotent. Indeed, if $A^{-1}B$ and $A^{-1}C$ both have a single eigenvalue, then we may consider the basis $\set{A,B+\lambda_B A, C+\lambda_C A}$ for $\spann\set{A,B,C}$ where $A^{-1}(B+\lambda_B A) = A^{-1}B + \lambda_B I$ and $A^{-1}(C+\lambda_C A) = A^{-1}C + \lambda_C I$ are both nilpotent.
We will work in the basis for $\R^n$ furnished by \cref{prop:canonical_form} so that $A^{-1}B$ is in Jordan canonical form (note that $m_2 = m_3 =m_4 = 0$ by the assumptions that $A$ is invertible and $A^{-1}B$ has real eigenvalues) and assume that the blocks of $A^{-1}B$ are ordered first according to increasing eigenvalue then increasing block sizes.

We will break our proof into four cases:
First, we will consider where $A^{-1}B$ has multiple eigenvalues.
The remaining three cases will consider when the Jordan block structure of $A^{-1}B$ has: multiple block sizes, multiple blocks of the same size, and a single block.

\paragraph{Case 1.} Suppose $A^{-1}B$ has $\ell$-many distinct eigenvalues. Write $C$ as an $\ell\times \ell$ block matrix according to the partition induced by the eigenvalues of $A^{-1}B$. Then, as $A^{-1}C$ and $A^{-1}B$ commute, we have that $A^{-1}C$ (perforce $C$) is block diagonal. Thus, according to the block structure induced by the eigenvalues of $A^{-1}B$, the matrices $A$, $B$, $C$ are jointly block diagonal, with each diagonal block satisfying the conditions of the inductive hypothesis. We conclude that $\set{A,B,C}$ is ASDC.

\paragraph{Case 2.} Suppose $A^{-1}B$ and $A^{-1}C$ are nilpotent and that $A^{-1}B$ has distinct block sizes. For concreteness, suppose $A^{-1}B$ has $k$ blocks of size $\eta = n_1 =  \dots = n_k < n_{k+1} \leq \dots \leq n_m$. By \cref{prop:canonical_form},
\begin{align*}
A = \Diag(\sigma_1 F_\eta,\dots, \sigma_k F_\eta, \sigma_{k+1} F_{n_{k+1}},\dots, \sigma_m F_{n_m})
\end{align*}
for some $\sigma_i \in\set{\pm 1}$. Set
\begin{align}
\label{eq:case_2_construction}
\tilde C = C + \delta\Diag(\sigma_1 F_\eta, \dots, \sigma_k F_\eta, 0_{n_{k+1}},\dots, 0_{n_m}).
\end{align}
Applying \cref{lem:T_iff_commuting}, we have that $A^{-1}\tilde C$ commutes with $A^{-1}B$ and that $\tilde C\in\S^n$.
Let $\Pi$ denote the linear map furnished by \cref{lem:toeplitz_mapping}. As $n_i\neq n_j$ for all $i\leq k$ and $j\geq k+1$, we have that $\Pi(A^{-1}C)$  can be written as a block diagonal matrix
\begin{align*}
\Pi(A^{-1}C) = \begin{smallpmatrix}
	\Pi(A^{-1}C)_{1,1} & \\
	& \Pi(A^{-1}C)_{2,2}
\end{smallpmatrix}
\end{align*}
with blocks of size $k\times k$ and $(m - k)\times (m - k)$ respectively. As $\Pi$ preserves eigenvalues for inputs in $\T$, we have that $\Pi(A^{-1}C)_{1,1}$ and $\Pi(A^{-1}C)_{2,2}$ are both nilpotent. Then, as $A^{-1}\tilde C$ has the same eigenvalues as
\begin{align*}
\Pi(A^{-1}\tilde C) = \begin{smallpmatrix}
		\Pi(A^{-1}C)_{1,1} + \delta I_{k} & \\
	& \Pi(A^{-1}C)_{2,2}
\end{smallpmatrix},
\end{align*}
we deduce that $A^{-1}\tilde C$ has eigenvalues $\set{0,\delta}$. We have reduced to case (1) whence $\set{A,B,\tilde C}$ is ASDC. We conclude that $\set{A,B,C}$ is ASDC.

\paragraph{Case 3.} Suppose $A^{-1}B$ and $A^{-1}C$ are nilpotent and that $A^{-1}B$ has Jordan blocks all of the same dimension. For concreteness, suppose $A^{-1}B$ has $m\geq 2$ Jordan blocks of dimension $\eta$.
In this case \cref{prop:canonical_form} states that
\begin{align*}
A = \Diag(\sigma_1,\dots,\sigma_m)\otimes F_\eta
\quad\text{and}\quad
B = \Diag(\sigma_1,\dots,\sigma_m)\otimes G_\eta
\end{align*}
where $\sigma_i \in\set{\pm 1}$. Write $C$ as a $m\by m$ block matrix with blocks $C_{i,j}\in\R^{\eta \by \eta}$.
By \cref{lem:T_iff_commuting}, $A^{-1}C\in\T$ and we may write
\begin{align*}
C_{i,j} = F_\eta\left(\gamma_{i,j}^{(1)} I_\eta + \sum_{\ell=2}^\eta \gamma_{i,j}^{(\ell)}(F_\eta G_\eta)^{\ell - 1}\right).
\end{align*}
Let $\Pi$ denote the linear map furnished by \cref{lem:toeplitz_mapping}. Let
\begin{align}
\label{eq:case_3_construction}
\bar A = \Diag(\sigma_1,\dots,\sigma_m) \quad\text{and}\quad
\bar C = \begin{pmatrix}
	\gamma_{i,j}^{(1)}
\end{pmatrix}.
\end{align}
Note that as $C\in\S^n$, we have $\gamma_{i,j}^{(1)} = \gamma_{j,i}^{(1)}$, whence $\bar A,\bar C\in\S^m$. As $\Pi$ preserves the eigenvalues for inputs in $\T$ and $\bar A^{-1}\bar C = \Pi(A^{-1}C)$, we deduce that $\bar A^{-1}\bar C$ has real eigenvalues (in fact, the single eigenvalue $0$). Then applying \cref{lem:perturbed_canonical_form}, there exists $\bar C'\in\S^m$ such that $\norm{\bar C - \bar C'}\leq \delta$ and $\bar A^{-1}\bar C'$ has $m$-many distinct real eigenvalues. Finally, set
\begin{align*}
\tilde C &= C + (\bar C' - \bar C) \otimes F_\eta.
\end{align*}
Then \cref{lem:T_iff_commuting} implies that $A^{-1}B$ and $A^{-1}\tilde C$ commute. Furthermore, by construction, $A^{-1}\tilde C$ has upper triangular  Toeplitz blocks so that its eigenvalues are the same as the eigenvalues of $\Pi(A^{-1}\tilde C) = \bar A^{-1}\bar C'$. We have reduced to case (1) and $\set{A, B, \tilde C}$ is ASDC. We conclude that $\set{A,B,C}$ is also ASDC.

\paragraph{Case 4.} Suppose $A^{-1}B$ and $A^{-1}C$ are nilpotent and that $A^{-1}B$ is a single Jordan block.
Then, by \cref{prop:canonical_form}, $A = \sigma F_n$ and $B = \sigma G_n$ for some $\sigma\in\set{\pm 1}$. Furthermore, by \cref{lem:T_iff_commuting} and the assumption that $A^{-1}C$ is nilpotent, we may write
\begin{align*}
C = \sigma F_n\left(\sum_{i=2}^{n} c_i (F_n G_n)^{i-1}\right)
\end{align*}
for some $c_2,\dots,c_n\in\R$.

If $n = 2$, then $C = c_2 \sigma G_2$. We may set
\begin{align}
\label{eq:case_4_construction_n_eq_2}
\tilde B = \sigma (G_2 + \delta H_2)\quad\text{and}\quad
\tilde C = c_2 \sigma(G_2 +\delta H_2).
\end{align}
Then, $\set{A^{-1}\tilde B, A^{-1}\tilde C}$ are a pair of commuting matrices with real simple eigenvalues.

If $n\geq 3$, set
\begin{align}
\label{eq:case_4_construction_n_geq_3}
\tilde B = B + \delta\sigma(e_1e_n^\intercal + e_ne_1^\intercal)
\quad\text{and}\quad
\tilde C = C + \sigma(e_n\gamma^\intercal + \gamma e_n^\intercal)
\end{align}
where $\gamma\in\R^n$ is defined recursively as $\gamma_n = \gamma_{n-1} = 0$ and $\gamma_i = \delta(c_{i+1} + \gamma_{i+1})$ for $i \in[n-2]$. A straightforward calculation shows that $A^{-1}\tilde B$ and $A^{-1}\tilde C$ commute and both have real eigenvalues. Finally, as $A^{-1}\tilde B$ has distinct eigenvalues $\set{0,\delta}$, we have reduced to case (1) and $\set{A, \tilde B, \tilde C}$ is ASDC. We conclude that $\set{A,B,C}$ is also ASDC.
\end{proof}

\begin{corollary}
Let $\cA = \set{A_1,\dots,A_m}$ in \eqref{eq:intro_qcqp} and suppose $\spann(\cA)=\spann\set{A,B,C}$ where $A$ is invertible.
Furthermore, suppose $A^{-1}B$ and $A^{-1}C$ commute and have real eigenvalues. Then, for any $\epsilon>0$, there exist $\norm{\tilde A_i - A_i}\leq \epsilon$ such that
\begin{align}
\label{eq:asdc_nonsingular_triple_QCQP}
\inf_{x\in\R^n}\set{x^\intercal \tilde A_1 x+ 2b_1^\intercal x + c_1:\, \begin{array}
        {l}
        x^\intercal \tilde A_i x+ 2b_i^\intercal x + c_i \,\boxempty_i\, 0,\,\forall i\in[2,m]\\
        x\in\cL
\end{array}}
\end{align}
is a diagonalizable QCQP. The matrices $\tilde A_i$ and an invertible matrix $P$ diagonalizing \eqref{eq:asdc_nonsingular_triple_QCQP} can be computed via the construction in \cref{thm:asdc_nonsingular_triple}.
\end{corollary}


\section{Restricted SDC}
\label{sec:rsdc}
In this section, we investigate a second new notion of simultaneous diagonalizability called \emph{restricted} SDC. We will see soon that we have in fact already seen this property before in \cref{sec:asdc_pairs}.

\begin{definition}
\label{def:rsdc}
Let $\cA\subseteq\S^n$ and $d\in\N$. We will say that $\cA$ is \emph{$d$-restricted SDC} ($d$-RSDC) if there exist matrices $\bar A\in\S^{n+d}$ containing $A$ as its top-left $n\by n$ principal submatrix for every $A\in\cA$ such that $\set{\bar A:\, A\in\cA}$ is SDC.\mathprog{\qed}
\end{definition}

We record some simple consequences of the $d$-RSDC property that follow from \cref{obs:sdc_span_subset} and \cref{lem:sdc_closed_under_padding}.
\begin{observation}
\label{obs:rsdc_span_subset}
Let $\cA\subseteq\S^n$ and $d\in\N$.
\begin{itemize}
        \item $\cA$ is $d$-RSDC if and only if $\set{A_1,\dots,A_m}$ is $d$-RSDC for some basis $\set{A_1,\dots,A_m}$ of $\spann(\cA)$.
        \item If $\cA$ is $d$-RSDC, then $\cA$ is $d'$-RSDC for all $d'\geq d$.
\end{itemize}
\end{observation}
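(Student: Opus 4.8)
The plan is to reduce both bullets to facts already recorded about the SDC property: that SDC is closed under taking spans and subsets (\cref{obs:sdc_span_subset}) and that appending a common zero block does not affect SDC-ness (\cref{lem:sdc_closed_under_padding}), together with the elementary observation that any spanning set of a (finite-dimensional) subspace contains a basis of that subspace.

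For the first bullet I would prove the two implications separately. For the ($\Leftarrow$) direction, suppose $\set{A_1,\dots,A_m}$ is a basis of $\spann(\cA)$ that is $d$-RSDC, witnessed by a map $f$ with the top-left $n\by n$ block of $f(A_i)$ equal to $A_i$ and $\set{f(A_1),\dots,f(A_m)}$ SDC. Extend $f$ to $\spann(\cA)$ by linearity: for $A = \sum_i c_i A_i$ (with the $c_i\in\R$ the unique coordinates of $A$ in this basis), set $\hat f(A) = \sum_i c_i f(A_i)$. The top-left block of $\hat f(A)$ is then $\sum_i c_i A_i = A$, and $\set{\hat f(A):\,A\in\cA}\subseteq\spann(\set{f(A_1),\dots,f(A_m)})$, which is SDC by \cref{obs:sdc_span_subset}; restricting $\hat f$ to $\cA$ witnesses that $\cA$ is $d$-RSDC. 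For the ($\Rightarrow$) direction, since $\cA$ spans $\spann(\cA)$ it contains a basis $\set{A_1,\dots,A_m}\subseteq\cA$; if $f:\cA\to\H^{n+d}$ witnesses that $\cA$ is $d$-RSDC, then the restriction of $f$ to $\set{A_1,\dots,A_m}$ still has the correct top-left blocks, and $\set{f(A_1),\dots,f(A_m)}\subseteq f(\cA)$ is SDC as a subset of an SDC set, so this basis is $d$-RSDC.

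For the second bullet, suppose $\cA$ is $d$-RSDC with witness $f:\cA\to\H^{n+d}$, and let $d'\geq d$. Define $g:\cA\to\H^{n+d'}$ by $g(A) = f(A)\oplus 0_{d'-d}$. The top-left $n\by n$ block of $g(A)$ is still $A$, and $g(\cA)$ has the common block decomposition required by \cref{lem:sdc_closed_under_padding} with inner block $f(A)$ and trailing zero block of size $d'-d$; hence $g(\cA)$ is SDC if and only if $f(\cA)$ is, which holds by hypothesis. Thus $g$ witnesses that $\cA$ is $d'$-RSDC. I do not anticipate a genuine obstacle in any of this; the one point deserving care is the ($\Rightarrow$) direction of the first bullet, where one must \emph{not} take an arbitrary basis of $\spann(\cA)$ (on which the witness $f$ is neither defined nor necessarily linear) but instead exploit that $\cA$, being a spanning set, already contains a basis — everything else is bookkeeping with the two closure properties.
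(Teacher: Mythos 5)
Your proof is correct and takes essentially the same route the paper intends: the paper records this as an observation that ``follows from'' \cref{obs:sdc_span_subset} and \cref{lem:sdc_closed_under_padding} without spelling out the argument, and your write-up makes exactly that reduction explicit, including the two points of care (extending the witness linearly over the real span, and using that a spanning set of a finite-dimensional space contains a basis for the forward direction).
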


The following lemma explains the connection between the $d$-RSDC property and the ASDC property.
\begin{lemma}
\label{lem:asdc_almost_rsdc}
Let $A_1,\dots,A_m \in\S^n$ and let $d\in\N$. If $\cA = \set{A_1,\dots,A_m}$ is $d$-RSDC, then
\begin{align*}
\cA\oplus 0_d \coloneqq \set{\begin{pmatrix}
        A_i &\\
        & 0_d
\end{pmatrix}:\, i\in[m]}
\end{align*}
is ASDC. On the other hand, if $\cA\oplus 0_d$ is ASDC, then for all $\epsilon>0$, there exist $\tilde A_1,\dots,\tilde A_m\in\S^n$ such that
\begin{itemize}
        \item for all $i\in[m]$, the spectral norm $\norm{A_i - \tilde A_i}\leq \epsilon$, and
        \item $\set{\tilde A_1,\dots,\tilde A_m}$ is $d$-RSDC.
\end{itemize}
\end{lemma}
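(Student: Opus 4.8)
The plan is to establish the two implications separately; both are short.

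For the first implication, suppose $\cA$ is $d$-RSDC, witnessed by a map $f\colon\cA\to\H^{n+d}$ whose image $f(\cA)$ is SDC and such that the top-left $n\times n$ block of $f(A_i)$ equals $A_i$. Write
\[
f(A_i)=\begin{pmatrix} A_i & B_i\\ B_i^* & C_i\end{pmatrix},
\]
and let $P$ be invertible with $P^* f(A_i) P$ diagonal for every $i$. The key observation is that the SDC property is preserved under congruence, so we are free to rescale the last $d$ coordinates: for $\epsilon>0$ set $D_\epsilon=\Diag(I_n,\epsilon I_d)$ and
\[
g_\epsilon(A_i)\coloneqq D_\epsilon^* f(A_i) D_\epsilon=\begin{pmatrix} A_i & \epsilon B_i\\ \epsilon B_i^* & \epsilon^2 C_i\end{pmatrix}.
\]
Since $D_\epsilon$ is real and diagonal, $(D_\epsilon^{-1}P)^* g_\epsilon(A_i)(D_\epsilon^{-1}P)=P^* f(A_i) P$ is diagonal, so $\set{g_\epsilon(A_i):i\in[m]}$ is SDC for every $\epsilon>0$, while $g_\epsilon(A_i)\to A_i\oplus 0_d$ as $\epsilon\to 0$. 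Taking $\epsilon=1/j$ exhibits a map witnessing that $\cA\oplus 0_d$ is ASDC.

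For the second implication, suppose $\cA\oplus 0_d$ is ASDC. Since this is a finite set, \cref{obs:asdc_finite} applies: given $\epsilon>0$ there exist Hermitian $\hat A_1,\dots,\hat A_m\in\H^{n+d}$ with $\norm{(A_i\oplus 0_d)-\hat A_i}\le\epsilon$ for all $i$ and $\set{\hat A_1,\dots,\hat A_m}$ SDC. Let $\tilde A_i\in\H^n$ be the top-left $n\times n$ principal submatrix of $\hat A_i$. Passing to a principal submatrix never increases the spectral norm (if $M|_V=R^*MR$ with $R$ consisting of standard basis columns, then $\norm{R}=1$), and applying this to $M=(A_i\oplus 0_d)-\hat A_i$ and $V=\C^n\times\set{0}$ yields $\norm{A_i-\tilde A_i}\le\epsilon$. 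Finally, the map $\tilde A_i\mapsto \hat A_i$ witnesses that $\set{\tilde A_1,\dots,\tilde A_m}$ is $d$-RSDC, completing the proof.

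There is no substantial obstacle here: the only points requiring care are choosing the diagonal rescaling congruence $D_\epsilon$ in the first part so that the SDC property is retained while the off-diagonal and bottom-right blocks vanish in the limit, and, in the second part, invoking the $\epsilon$-perturbation reformulation of ASDC (\cref{obs:asdc_finite}) together with the elementary spectral-norm bound for principal submatrices.
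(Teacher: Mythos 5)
Your proof is correct and takes essentially the same approach as the paper: for the first implication, a diagonal rescaling congruence $\Diag(I_n,\epsilon I_d)$ (the paper uses $\sqrt{\epsilon}$ rather than $\epsilon$, which is immaterial) shrinks the off-diagonal and bottom-right blocks while preserving SDC; for the second implication, the top-left $n\times n$ principal submatrices of the approximating SDC set furnish the desired $d$-RSDC set via the standard principal-submatrix spectral norm bound.
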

\begin{proof}
First, suppose $\set{A_1,\dots,A_m}$ is $d$-RSDC and let $\set{\tilde A_1,\dots,\tilde A_m}\subseteq\S^{n+d}$ denote the matrices furnished by $d$-RSDC. Next, let $\epsilon>0$ and set
\begin{align*}
P = \begin{pmatrix}
        I_n &\\
        & \sqrt{\epsilon} I_d
\end{pmatrix}.
\end{align*}
Clearly, $P$ is invertible so that $\set{P^\intercal\tilde A_i P:\, i\in[m]}$ is also SDC. Then, note that
\begin{align*}
P^\intercal \tilde A_i P &= P^\intercal \begin{pmatrix}
        A_i & (\tilde A_i)_{1,2}\\
        (\tilde A_i)_{1,2}^* & (\tilde A_i)_{2,2}
\end{pmatrix}P = \begin{pmatrix}
        A_i & \sqrt{\epsilon}(\tilde A_i)_{1,2}\\
        \sqrt{\epsilon}(\tilde A_i)_{1,2}^* & \epsilon (\tilde A_i)_{2,2}
\end{pmatrix}
\end{align*}
so that $\cA\oplus 0_d$ is ASDC.

Next, suppose $\cA\oplus 0_d$ is ASDC and let $\epsilon>0$. Then, there exist $\bar A_1,\dots,\bar A_m\in\S^{n+d}$ such that $\norm{\bar A_i - A_i \oplus 0_d}\leq \epsilon$ and $\set{\bar A_1,\dots,\bar A_m}$ is SDC. Finally, note that $\norm{A_1 - (\bar A_1)_{1,1}}\leq \epsilon$.
\end{proof}

\begin{remark}
While the restriction of an SDC set does not necessarily result in an SDC set, there is a setting arising naturally when analyzing QCQPs in which the restriction of an SDC set is again SDC. Specifically, let $Q_1,\dots,Q_m\in\S^{n+1}$ where $Q_i$ has $A_i$ as its top-left $n\by n$ principal submatrix. Furthermore suppose that there exists a positive definite matrix in $\spann(\set{A_1,\dots,A_m})$. Then, if $\set{Q_1,\dots,Q_m,e_{n+1}e_{n+1}^\intercal}$ is SDC, so is $\set{A_1,\dots,A_m}$. In words, if the homogenized quadratic forms in a QCQP, along with $e_{n+1}e_{n+1}^\intercal$, are SDC, then so are the original quadratic forms (under a standard ``definiteness'' assumption). See \cref{sec:hom_to_inhom_SDC} for details.\mathprog{\qed}
\end{remark}

\subsection{$1$-restricted SDC}
We record a recasting of \cref{thm:asdc_singular} in terms of these new definitions.
\begin{theorem}
\label{thm:rsdc_pair}
Let $A,B\in\S^n$.
Then for every $\epsilon>0$, there exist $\tilde A,\tilde B\in\S^n$ such that $\norm{A - \tilde A},\norm{B - \tilde B}\leq \epsilon$ and $\set{\tilde A,\tilde B}$ is $1$-RSDC.
Furthermore, if $A$ is invertible and $A^{-1}B$ has simple eigenvalues, then $\set{A,B}$ is itself $1$-RSDC.
\end{theorem}
\begin{proof}
The first claim follows from \cref{thm:asdc_singular} and \cref{lem:asdc_almost_rsdc} applied to $\set{A,B}\oplus 0_1$.
The second claim follows from the additional observation that if $A$ is invertible and $A^{-1}B$ has simple eigenvalues, then the construction of \cref{thm:asdc_singular} follows case 1 and \emph{does not} perturb either $A$ or $B$ (see also \cref{alg:1_rsdc}).
\end{proof}

\begin{algorithm}
        [t]
        \caption{$1$-RSDC construction}
        \label{alg:1_rsdc}
        \textbf{Input}: $A,\,B\in\S^n$ such that $A$ is invertible and $A^{-1}B$ has simple eigenvalues\\
       { \textbf{Output}: $\tilde A,\tilde B\in\S^{n+1}$ such that $\tilde A$ has $A$ as its top-left submatrix, $\tilde B$ has $B$ as its top-left submatrix, and ${\tilde A,\tilde B}$ is SDC}
        \begin{enumerate}
        \item Let $P\in\R^{n\by n}$ denote the invertible matrix guaranteed by \cite{uhlig1976canonical}; this can be computed using an eigenvalue decomposition of $A^{-1}B$. Then $P^\intercal AP = \Diag(\sigma_1,\dots,\sigma_r, F_2,\dots,F_2)$ and $P^\intercal B P = \Diag(\sigma_1\mu_1,\dots,\sigma_r\mu_r,T_1,\dots,T_k)$. Here, $\sigma_1,\dots,\sigma_r \in\set{\pm 1}$, $\mu_1,\dots,\mu_r\in\R$ and for $i \in[k]$, the matrix $T_i$ has the form
        \begin{align*}
        T_i = \begin{pmatrix}
                \Im(\lambda_i) & \Re(\lambda_i)\\
                \Re(\lambda_i) & -\Im(\lambda_i)
        \end{pmatrix}
        \end{align*}
        for some $\lambda_i\in \C\setminus\R$.
        \item Choose an arbitrary set of $2k+1$ distinct points $\xi_1,\dots,\xi_{2k+1}\in\R$
        \item Solve for $x,\,y\in\R^k$ and $z\in\R$ in the linear system \eqref{eq:f_matrix_system}
        \item Let $\alpha\in\C^k$ solve \eqref{eq:alpha_to_x_y} and define $\gamma\in\R^{r+2k}$ as
        \begin{align*}
        \gamma = \begin{pmatrix}
                0_{1\times k} &
                \Re(\alpha_1) &
                \Im(\alpha_1) &
                \hdots &
                \Re(\alpha_k) &
                \Im(\alpha_k)
        \end{pmatrix}^\intercal.
        \end{align*}
        \item Return
        \begin{align*}
        \tilde A = \begin{pmatrix}
                A &\\
                & 1
        \end{pmatrix},\qquad
        \tilde B =
        \begin{pmatrix}
                B & P^{-\intercal}\gamma\\
                \gamma^\intercal P^{-1} & z
        \end{pmatrix}.
        \end{align*}
\end{enumerate}
\end{algorithm}

\begin{corollary}
Let $\cA = \set{A_1,\dots,A_m}$ in \eqref{eq:intro_qcqp} and suppose $\spann(\cA) = \set{A,B}$. Then, for any $\epsilon>0$, there exist $\bar A_i\in\S^{n+1}$ such that $\norm{(\bar A_i)_{1,1} - A_i}\leq \epsilon$ and
\begin{align}
\label{eq:1rsdc_qcqp}
\inf_{x\in\R^n,w}\set{\begin{pmatrix}
        x\\
        w
\end{pmatrix}^\intercal \bar A_1 \begin{pmatrix}
        x\\w
\end{pmatrix}+ 2b_1^\intercal x + c_1:\, \begin{array}
        {l}
        \begin{pmatrix}
        x\\
        w
\end{pmatrix}^\intercal \bar A_i \begin{pmatrix}
        x\\w
\end{pmatrix}+ 2b_i^\intercal x + c_i \,\boxempty_i\, 0,\,\forall i\in[2,m]\\
        x\in\cL\\
        w = 0
\end{array}}
\end{align}
is a diagonalizable QCQP.
If $A$ is invertible and $A^{-1}B$ has simple eigenvalues, then $(\bar A_i)_{1,1}$ can be taken to be equal to $A_i$.
The matrices $\bar A_i$ and an invertible matrix $P$ diagonalizing \eqref{eq:asdc_nonsingular_triple_QCQP} can be computed via \cref{proc:simple_eigs,alg:1_rsdc}.
\end{corollary}


\subsection{$d$-restricted SDC}
Let $\set{A,B}\subseteq\S^n$ such that $A$ is invertible and $A^{-1}B$ has simple eigenvalues.
By \cref{obs:rsdc_span_subset} and \cref{thm:rsdc_pair}, we have that $\set{A,B}$ is $d$-RSDC for any $d\geq 1$. In this section, we record an alternate construction showing that $\set{A,B}$ is $d$-RSDC for $d\geq 1$. This alternate construction applies \cref{alg:1_rsdc} on smaller blocks of the canonical form and has empirically better numerical performance in QCQP applications (see Section \ref{sec:experiments}).

\begin{theorem}\label{thm:d_rsdc_pair}
Let $A,B\in\S^n$ such that $A$ is invertible and $A^{-1}B$ has simple eigenvalues. Then, $\set{A,B}$ is $d$-RSDC for any $d\geq 1$.
\end{theorem}
\begin{proof}
Without loss of generality, we may assume that $A,B$ are in canonical form (\cref{prop:canonical_form}) and $m_1 = 0$ (else consider the submatrix of $A,B$ corresponding to the remaining blocks).

Partition $[m]$ into $d$-many contiguous subintervals. Write $A$ and $B$ as diagonal block matrices of diagonal block matrices according to the partition of $[m]$. In other words, write $A = \Diag(A_1,\dots,A_d)$ and $B = \Diag(B_1,\dots,B_d)$ where each $A_i$ is a diagonal block matrix of $F_2$-matrices and each $B_i$ is a diagonal block matrix with matrices of the form $\begin{smallpmatrix}
        \Im(\lambda)&\Re(\lambda)\\\Re(\lambda)&-\Im(\lambda)
\end{smallpmatrix}$ for $\lambda\in\C\setminus\R$.
Set
\begin{align}
\label{eq:d_rsdc_construction}
\tilde A = \left(\begin{array}
        {@{}c|c|c|c|c|c@{}}
        A_1 & & &&&\\\hline
        &\ddots&&&&\\\hline
        &&A_d&&&\\\hline
        &&&1&&\\\hline
        &&&&\ddots&\\\hline
        &&&&&1
\end{array}\right)
\quad\text{and}\quad
\tilde B = \left(\begin{array}
        {@{}c|c|c|c|c|c@{}}
        B_1 & & & x_1&&\\\hline
        &\ddots&&&\ddots&\\\hline
        &&B_d&&&x_d\\\hline
        x_1^\intercal &&&z_1&&\\\hline
        &\ddots&&&\ddots&\\\hline
        &&x_d^\intercal &&&z_d
\end{array}\right)
\end{align}
for $z_1,\dots,z_d\in\R$ and vectors $x_1,\dots,x_d$ of the appropriate dimensions to be chosen later.
After a simultaneous permutation of the coordinates, we can write $\tilde A$ and $\tilde B$ as diagonal block matrices with blocks of the form
\begin{align*}
\begin{pmatrix}
        A_i&\\&1
\end{pmatrix}\quad\text{and}\quad
\begin{pmatrix}
        B_i&x_i\\x_i^\intercal& z_i
\end{pmatrix}.
\end{align*}
By \cref{thm:rsdc_pair} (summarized in \cref{alg:1_rsdc}) and the assumption that $A^{-1}B$ has simple eigenvalues, we may, for each $i\in[d]$, pick $x_i\in\R^n$ and $z_i\in\R$ such that the pair of matrices above is SDC. It remains to note that the diagonal block concatenation of SDC matrices is SDC.
\end{proof}

\begin{algorithm}
        [t]
        \caption{$d$-RSDC construction}
        \label{alg:d_rsdc}
        \textbf{Input}: $A,\,B\in\S^n$ such that $A$ is invertible and $A^{-1}B$ has simple eigenvalues\\
        {\textbf{Output}: $\tilde A,\tilde B\in\S^{n+d}$ such that $\tilde A$ has $A$ as its top-left submatrix, $\tilde B$ has $B$ as its top-left submatrix, and ${\tilde A,\tilde B}$ is SDC}
        \begin{enumerate}
        \item Let $P\in\R^{n\by n}$ denote the invertible matrix guaranteed by \cite{uhlig1976canonical}; this can be computed using an eigenvalue decomposition of $A^{-1}B$. Then $P^\intercal AP = \Diag(\sigma_1,\dots,\sigma_r, F_2,\dots,F_2)$ and $P^\intercal B P = \Diag(\sigma_1\mu_1,\dots,\sigma_r\mu_r,T_1,\dots,T_k)$. Here, $\sigma_1,\dots,\sigma_r \in\set{\pm 1}$, $\mu_1,\dots,\mu_r\in\R$ and for $i \in[k]$, the matrix $T_i$ has the form
        \begin{align*}
        T_i = \begin{pmatrix}
                \Im(\lambda_i) & \Re(\lambda_i)\\
                \Re(\lambda_i) & -\Im(\lambda_i)
        \end{pmatrix}
        \end{align*}
        for some $\lambda_i\in \C\setminus\R$.
        \item Partition $[k]= \kappa_1\cup \dots\kappa_d$ into contiguous subintervals where $\kappa_i=[\text{start}_i,\text{end}_i]$
        \item For each $i\in[d]$, apply \cref{alg:1_rsdc} to get $x_i\in\R^{\abs{\kappa_i}}$ and $z_i\in\R$ such that
        \begin{align*}
        \left(\begin{array}
                {@{}c|c@{}}
                \begin{smallmatrix}
                        F_2\\&\ddots\\&&F_2
                \end{smallmatrix} & \\\hline
                & 1
        \end{array}\right)
        \quad\text{and}\quad
        \left(\begin{array}
                {@{}c|c@{}}
                \begin{smallmatrix}
                        T_{\text{start}_i}\\&\ddots\\&&T_{\text{end}_i}
                \end{smallmatrix} & x_i\\\hline
                x_i^\intercal& z_i
        \end{array}\right)
        \end{align*}
        are SDC
        \item Let $Q \coloneqq P \oplus I_1$ and return
        \begin{align*}
        Q^{-\intercal} \tilde A Q^{-1} \quad\text{and}\quad
        Q^{-\intercal} \tilde B Q^{-1}
        \end{align*}
        where $\tilde A$ and $\tilde B$ are defined in \eqref{eq:d_rsdc_construction}.
\end{enumerate}
\end{algorithm}

\begin{corollary}
Let $\cA = \set{A_1,\dots,A_m}$ in \eqref{eq:intro_qcqp} and suppose $\spann(\cA) = \set{A,B}$. Then, for any $\epsilon>0$, there exist $\bar A_i\in\S^{n+d}$ such that $\norm{(\bar A_i)_{1,1} - A_i}\leq \epsilon$ and
\begin{align}
\label{eq:drsdc_qcqp}
\inf_{x\in\R^n,w\in\R^d}\set{\begin{pmatrix}
        x\\
        w
\end{pmatrix}^\intercal \bar A_1 \begin{pmatrix}
        x\\w
\end{pmatrix}+ 2b_1^\intercal x + c_1:\, \begin{array}
        {l}
        \begin{pmatrix}
        x\\
        w
\end{pmatrix}^\intercal \bar A_i \begin{pmatrix}
        x\\w
\end{pmatrix}+ 2b_i^\intercal x + c_i \,\boxempty_i\, 0,\,\forall i\in[2,m]\\
        x\in\cL\\
        w = 0
\end{array}}
\end{align}
is a diagonalizable QCQP.
If $A$ is invertible and $A^{-1}B$ has simple eigenvalues, then $(\bar A_i)_{1,1}$ can be taken to be equal to $A_i$.
The matrices $\bar A_i$ and an invertible matrix $P$ diagonalizing \eqref{eq:asdc_nonsingular_triple_QCQP} can be computed via \cref{proc:simple_eigs,alg:d_rsdc}.
\end{corollary}

{

\subsection{Relation between $d$-RSDC and a naive diagonalization}

The following lemma states that the
quantity $d$ in
$d$-RSDC is equivalent to the number of additional vectors in $\R^n$ required to diagonalize a set of matrices in the following sense.

\begin{lemma}
    \label{lem:naive_lifting}
Let $\cA\subseteq\S^n$. Then $\cA$ is $d$-RSDC if and only if there exists $\set{\ell_1,\dots,\ell_{n+d}}\subseteq\R^n$ such that for each $A\in\cA$ we can write
\begin{align*}
    A = \sum_{i=1}^{n+d} \mu_i \ell_i\ell_i^\intercal
\end{align*}
for some choice of $\mu\in\R^{n+d}$.
\end{lemma}
\begin{proof}
Suppose $\cA$ is $d$-RSDC. Let $\bar \cA\subseteq\S^{n+d}$ denote the SDC set furnished by the definition of $d$-RSDC.
Let $\bar \ell_1,\dots,\bar\ell_{n+d}\in\R^{n+d}$ be a basis diagonalizing $\bar \cA$.
Also define $\ell_1,\dots,\ell_{n+d}\in\R^{n}$ to be the vectors formed by taking the first $n$ coordinates of $\bar\ell_i$.
Now, suppose $A \in\cA$ and let $\bar A \in \bar\cA$ denote a matrix with $A$ in its top-left $n\by n$ minor. Then, there exists $\mu\in\R^{n+d}$ such that
\begin{align*}
    \bar A = \sum_{i=1}^{n+d} \mu_i \bar \ell_i\bar\ell_i^\intercal.
\end{align*}
Note that the top-left block of $\bar A$ is $A$ and the top-left block of $\bar \ell_i\bar\ell_i^\intercal$ is $\ell_i\ell_i^\intercal$. We conclude that every $A\in\cA$ can be written in the form
\begin{align*}
    A = \sum_{i=1}^{n+d} \mu_i \ell_i\ell_i^\intercal
\end{align*}
for some choice of $\mu$. Note that as $\set{\bar\ell_i}$ spans $\R^{n+d}$, it also holds that $\set{\ell_i}$ spans $\R^n$.

In the reverse direction, suppose $\ell_1,\dots,\ell_{n+d}\in\R^n$ span $\R^n$ and are such that each $A\in\cA$ can be written in the form $A = \sum_{i}\mu_i \ell_i\ell_i^\intercal$ for some choice of $\mu$. Form the matrix $L = (\ell_1,\dots,\ell_{n+d})\in\R^{n\by (n+d)}$. As this matrix has full column rank, it can be extended to an invertible matrix $\bar L\in\R^{(n+d)\by (n+d)}$ where the top $n$ rows of $\bar L$ coincide with $L$. Now, if $A\in\cA$ is given by $\sum_{i=1}^{n+d}\mu_i \ell_i\ell_i^\intercal$, we also have that
\begin{align*}
    \bar L \Diag(\mu)\bar L^\intercal
\end{align*}
has $A$ as its top-left $n\by n$ principal minor.
\end{proof}

\begin{remark}
    \label{rem:naive_diagionalization}
From this lemma, we see that any set of $m$ matrices $\cA=\set{A_1,\dots,A_m}\subseteq\S^n$ is naively $(m-1)n$-RSDC and that we can ``lift'' it to a diagonlizable set by simply diagonalizing each matrix independently.
\end{remark}}

\section{Obstructions to further generalization}
\label{sec:obstructions}
In this section, we record explicit counterexamples to \textit{a priori} plausible extensions to \cref{thm:asdc_singular,thm:asdc_nonsingular_triple,thm:asdc_characterization_invertible}.

\subsection{Singular symmetric triples}
\label{subsec:sing_triples_obstruction}
In \cref{thm:asdc_singular}, we showed that any singular symmetric pair is ASDC. A natural question to ask is whether any singular set of symmetric matrices (regardless of the dimension of its span) is ASDC. The following theorem presents an obstruction to generalizations in this direction. Specifically, in contrast to \cref{thm:asdc_singular} (where it was shown that singularity implies ASDC in the context of symmetric pairs), \cref{thm:commutator_lower_bound_kernel} below shows that even symmetric \emph{triples} with \emph{``large amounts''} of singularity can fail to be ASDC.

\begin{theorem}
\label{thm:commutator_lower_bound_kernel}
Let $\set{A = I_n, B, C}\subseteq\S^n$. Then, if $d < \rank([B,C])/2$, the set
\begin{align*}
\set{\begin{pmatrix}
A &\\
& 0_d
\end{pmatrix}, \begin{pmatrix}
	B &\\
	& 0_d
\end{pmatrix}, \begin{pmatrix}
	C &\\
	& 0_d
\end{pmatrix}}
\end{align*}
is not ASDC.
\end{theorem}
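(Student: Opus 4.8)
The plan is to argue by contradiction: suppose the padded triple is ASDC, and extract from this a quantitative contradiction with the assumption $d < \rank([B,C])/2$. The key observation is that the commutator $[B,C]$ is exactly zero for any genuinely SDC triple containing a positive definite matrix, and more generally the rank of the commutator can only grow a controlled amount under the kind of padding and perturbation allowed by the ASDC definition. So the heart of the argument should be a lower-bound lemma saying roughly: if $\set{\tilde A, \tilde B, \tilde C}$ is SDC and $\norm{\tilde A - (A\oplus 0_d)}$, $\norm{\tilde B - (B\oplus 0_d)}$, $\norm{\tilde C-(C\oplus 0_d)}$ are all small, then $[\tilde B,\tilde C]$ has small norm, yet the restriction to the ``first $n$ coordinates'' forces $[\tilde B,\tilde C]$ to be close to $[B,C]$ modulo a correction of rank at most $2d$ coming from the off-diagonal blocks. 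Quantifying this correctly gives $\rank([B,C]) \le 2d$, contradicting the hypothesis.

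\textbf{Main steps.} First, I would use \cref{obs:asdc_finite} and \cref{lem:asdc_almost_rsdc} to reduce to the following concrete situation: for every $\epsilon>0$ there exist matrices $\hat A, \hat B, \hat C \in \H^{n+d}$, each within $\epsilon$ (in spectral norm) of $A\oplus 0_d$, $B\oplus 0_d$, $C\oplus 0_d$ respectively, such that $\set{\hat A, \hat B, \hat C}$ is SDC. Because $A = I_n$, the matrix $\hat A$ is within $\epsilon$ of $I_n \oplus 0_d$, so its top-left $n\times n$ block is invertible for $\epsilon$ small; write $\hat A$, $\hat B$, $\hat C$ in $n,d$ block form. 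Second, since $\set{\hat A, \hat B,\hat C}$ is SDC there is an invertible $P$ with $P^*\hat A P, P^*\hat B P, P^*\hat C P$ all diagonal, hence in particular $(P^*\hat B P)(P^*\hat C P) = (P^*\hat C P)(P^*\hat B P)$, which rearranges to $\hat B \hat A^{-1}\hat C = \hat C \hat A^{-1}\hat B$ — or more usefully, $\hat A^{-1}\hat B$ and $\hat A^{-1}\hat C$ commute, so $[\hat A^{-1}\hat B,\ \hat A^{-1}\hat C] = 0$ exactly (this uses \cref{prop:sdc_characterization_invertible} applied to the invertible element $\hat A$). Third, I would translate this exact commutation on $\H^{n+d}$ back to an approximate statement on the first $n$ coordinates: expanding $\hat A^{-1}\hat B$ and $\hat A^{-1}\hat C$ in block form, their $(1,1)$ blocks are $B + E_B$ and $C + E_C$ with $\norm{E_B},\norm{E_C} = O(\epsilon)$, plus off-diagonal contributions. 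The commutator condition then yields that $[B,C] = [B+E_B, C+E_C] - (\text{off-diagonal coupling terms}) + O(\epsilon)$, where the off-diagonal coupling terms have rank at most $2d$ because they factor through the $d$-dimensional ``padding'' coordinates. Fourth, letting $\epsilon\to 0$ and using lower semicontinuity of rank (the rank of $[B,C]$ cannot exceed the limit of the ranks of nearby matrices), conclude $\rank([B,C]) \le 2d$, contradicting $d < \rank([B,C])/2$.

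\textbf{Main obstacle.} The delicate point is Step 3 — making the bookkeeping of ``which parts of the commutator have rank $\le 2d$'' precise and genuinely $\epsilon$-robust. One must be careful that $\hat A^{-1}$ is only approximately block-diagonal (its $(1,2)$ block is $O(\epsilon)$), so the $(1,1)$ block of $\hat A^{-1}\hat B$ is $B + O(\epsilon)$ but the $(1,2)$ and $(2,1)$ blocks are genuinely not small in general — they are what carries the rank-$2d$ correction. The cleanest way to handle this is probably to observe that $\hat A^{-1}\hat B = \hat C^{-1}$-free, i.e.\ work directly with the matrices $\hat A^{-1}\hat B$ and $\hat A^{-1}\hat C$ (whose $(1,1)$ blocks converge to $B$ and $C$), note that an exact commutator of two $(n+d)\times(n+d)$ matrices whose bottom-right $d\times d$ blocks are bounded, when restricted to the $(1,1)$ block, equals the commutator of the $(1,1)$ blocks plus a term of the form $X_1 Y_2 - X_2 Y_1$ with $X_1,Y_1 \in\C^{n\times d}$ and $X_2,Y_2\in\C^{d\times n}$ — a matrix of rank at most $2d$. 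Taking $\epsilon\to 0$ gives $\rank([B,C] + R) = 0$ for some fixed $R$ of rank $\le 2d$ obtained as a limit, hence $\rank([B,C])\le 2d$. I expect the rest (the reductions in Steps 1–2 and the limiting argument in Step 4) to be routine given the tools already in the paper.
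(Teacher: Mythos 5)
Your proposal follows the paper's broad strategy (pass to an SDC approximant $\{\hat A,\hat B,\hat C\}$, invoke \cref{prop:sdc_characterization_invertible} to get $[\hat A^{-1}\hat B,\hat A^{-1}\hat C]=0$, restrict to the top-left $n\times n$ block, read off a rank-$\le 2d$ correction, and finish with lower semicontinuity of rank), but there is a genuine gap precisely at the point you flag as the ``delicate step,'' and your proposed resolution does not repair it. You assert that the $(1,2)$ block of $\hat A^{-1}$ is $O(\epsilon)$ and that the $(1,1)$ block of $\hat A^{-1}\hat B$ converges to $B$; both are false. Since $\hat A$ is a Hermitian matrix $\epsilon$-close to $I_n\oplus 0_d$, its bottom-right Schur complement is $O(\epsilon)$ in norm and may be \emph{nearly singular within that scale}, so $\hat A^{-1}$ has off-diagonal blocks that can be arbitrarily large as $\epsilon\to 0$. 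A one-dimensional example: $\hat A=\begin{pmatrix}1&\epsilon\\ \epsilon&\epsilon^3\end{pmatrix}$ is $O(\epsilon)$-close to $1\oplus 0$, yet $(\hat A^{-1})_{1,2}\sim 1/\epsilon$ and $(\hat A^{-1})_{1,1}\to 0$, not $1$. Consequently $(\hat A^{-1}\hat B)_{1,1}$ need not converge to $B$, the ``off-diagonal coupling terms'' $X_1Y_2-X_2Y_1$ need not have a limit (or even be bounded), and you cannot extract a fixed rank-$\le 2d$ matrix $R$ in the limit.

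The paper circumvents this by inserting a conjugation step you do not have: it replaces $\{\tilde A,\tilde B,\tilde C\}$ by $\{P^*\tilde A P,\,P^*\tilde B P,\,P^*\tilde C P\}$ with $P=\begin{pmatrix}\tilde A_{1,1}^{-1/2}&-\tilde A_{1,1}^{-1}\tilde A_{1,2}\\0&I_d\end{pmatrix}$. This makes $P^*\tilde A P$ exactly block-diagonal with $(1,1)$ block $I_n$ (so the troublesome Schur-complement inverse never enters the $(1,1)$ block of $\bar A^{-1}\bar B$, which is now literally $\bar B_{1,1}$), and crucially satisfies $\|P-I_{n+d}\|=O(\epsilon)$ so that $\bar B_{1,1}\to B$ and $\bar C_{1,1}\to C$. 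Only after this normalization does the identity $[\bar B_{1,1},\bar C_{1,1}]=\bar C_{1,2}\bar A_{2,2}^{-1}\bar B_{1,2}^*-\bar B_{1,2}\bar A_{2,2}^{-1}\bar C_{1,2}^*$ combine with lower semicontinuity of rank to give $\rank([B,C])\le 2d$. To complete your argument you would need to add exactly this normalization (or an equivalent device that prevents the inverse of the small Schur complement from contaminating the $(1,1)$ block); without it, Step 3 does not go through.
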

\begin{proof}
Suppose for the sake of contradiction that this set is ASDC. Let $\epsilon\in(0,1/2)$ and let $\set{\tilde A, \tilde B, \tilde C}\subseteq\S^{n+d}$ denote an SDC set furnished by the ASDC assumption. Without loss of generality, $\tilde A$ has rank $n + d$.
Write
\begin{align*}
\tilde A = \begin{pmatrix}
	\tilde A_{1,1} & \tilde A_{1,2}\\
	\tilde A_{1,2}^\intercal & \tilde A_{2,2}
\end{pmatrix}.
\end{align*}
Similarly decompose $\tilde B$ and $\tilde C$.
As $\epsilon \in(0,1/2)$, we have that $\tilde A_{1,1}$ is invertible.
Let
\begin{align*}
P = \begin{pmatrix}
	\tilde A_{1,1}^{-1/2} & -\tilde A_{1,1}^{-1}\tilde A_{1,2}\\
	0 & I_d
\end{pmatrix}.
\end{align*}
Then as $P$ is invertible, $\set{P^\intercal \tilde AP, P^\intercal\tilde BP, P^\intercal\tilde CP}$ is again SDC.
Note that $P^\intercal\tilde AP$ has the form
\begin{gather*}
P^\intercal\tilde A P = \begin{pmatrix}
	I_n & \\
	& \tilde A_{2,2} - \tilde A_{1,2}^\intercal \tilde A_{1,1}^{-1} \tilde A_{1,2}
\end{pmatrix}.
\end{gather*}
Furthermore,
\begin{align*}
&\norm{P^\intercal\tilde B P - B}\\
&\qquad= \norm{(P-I_{n+d})^\intercal\tilde B (P-I_{n+d}) + \tilde B (P-I_{n+d}) + (P-I_{n+d})^\intercal\tilde B + (\tilde B - B)}\\
&\qquad\leq \norm{\tilde B}\norm{P-I_{n+d}}^2 + 2\norm{\tilde B}\norm{P-I_{n+d}} + \epsilon.
\end{align*}
We claim that $\norm{P - I_{n+d}}$ can be bounded in terms of $\epsilon$:
\begin{align*}
\norm{P-I_{n+d}} &\leq \norm{\tilde A_{1,1}^{-1/2} - I} + \norm{\tilde A_{1,1}^{-1}}\norm{\tilde A_{1,2}}\\
&\leq \max\set{\frac{1}{\sqrt{1-\epsilon}} - 1,\, 1 - \frac{1}{\sqrt{1+\epsilon}}} + \frac{\epsilon}{1-\epsilon}\\
&\leq \frac{2\epsilon}{1-\epsilon}.
\end{align*}
Here, we have used the fact that $\norm{\tilde A - A \oplus 0_d}\leq \epsilon$, so that $\norm{\tilde A_{1,1} - I_n} \leq \epsilon$ and $\norm{\tilde A_{1,2}}\leq \epsilon$. Consequently, as we may also bound $\norm{\tilde B}\leq \norm{B} + \epsilon$,
we deduce that for any $\delta>0$, we can pick $\epsilon\in(0,1/2)$ small enough such that $\norm{P^\intercal\tilde BP - B}\leq \delta$. An identical calculation holds for $\norm{P^\intercal\tilde CP - C}$.
We conclude that for all $\delta>0$, there exist $\bar A,\bar B,\bar C$ of the form
\begin{align*}
\bar A&= \begin{pmatrix}
	I_n &\\
	& \bar A_{2,2}
\end{pmatrix},\quad
\bar B = \begin{pmatrix}
	\bar B_{1,1} & \bar B_{1,2}\\
	\bar B_{1,2}^\intercal & \bar B_{2,2}
\end{pmatrix},\quad
\bar C = \begin{pmatrix}
	\bar C_{1,1} & \bar C_{1,2}\\
	\bar C_{1,2}^\intercal & \bar C_{2,2}
\end{pmatrix}
\end{align*}
such that $\set{\bar A,\bar B,\bar C}$ is SDC, $\norm{A - \bar A},\norm{B-\bar B},\norm{C-\bar C}\leq \delta$, and $\bar A_{2,2}$ is invertible. Then by \cref{prop:sdc_characterization_invertible}, the top-left block of the commutator $[\bar A^{-1}\bar B,\bar A^{-1}\bar C]$ is equal to $0_{n}$. Expanding this top-left block, we deduce
\begin{align}
\label{eq:barAinvbarB_commutator_barAinvbarC}
[\bar B_{1,1},\bar C_{1,1}] = \bar C_{1,2}\bar A_{2,2}^{-1}\bar B_{1,2}^\intercal - \bar B_{1,2}\bar A_{2,2}^{-1}\bar C_{1,2}^\intercal.
\end{align}
Finally, by lower semi-continuity of rank, we have $\rank([\bar B_{1,1},\bar C_{1,1}]) \geq \rank([B,C])$ for all $\delta>0$ small enough. This is a contradiction as the expression on the right of \eqref{eq:barAinvbarB_commutator_barAinvbarC} has rank at most $2d<\rank([B,C])$.
\end{proof}

This same construction can be viewed as an obstruction to generalizations of \cref{thm:rsdc_pair} to symmetric triples with constant $d$.
\begin{corollary}
\label{cor:krsdc_obstruction}
Let $\set{A = I_n, B,C}\subseteq\S^n$. Then $A^{-1}B$ and $A^{-1}C$ are both diagonalizable with real eigenvalues and $\set{A,B,C}$ is not $d$-RSDC for any $d < \rank([B,C])/2$.
\end{corollary}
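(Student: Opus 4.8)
The plan is to obtain this corollary as an immediate consequence of \cref{thm:commutator_lower_bound_kernel} combined with the first implication of \cref{lem:asdc_almost_rsdc}. First I would dispose of the statement about $A^{-1}B$ and $A^{-1}C$: since $A = I_n$, these matrices are simply $B$ and $C$, which are Hermitian by hypothesis and therefore unitarily diagonalizable with real eigenvalues. This requires no argument beyond invoking the spectral theorem.

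For the $d$-RSDC claim I would argue by contradiction. Suppose $\set{A,B,C}$ is $d$-RSDC for some $d < \rank([B,C])/2$. Applying the first part of \cref{lem:asdc_almost_rsdc}, the $0_d$-padded set $\set{A \oplus 0_d,\, B \oplus 0_d,\, C \oplus 0_d}$ is then ASDC. But \cref{thm:commutator_lower_bound_kernel}, instantiated with this same $d$ and the same triple $\set{A = I_n, B, C}$, asserts precisely that this padded set is \emph{not} ASDC. This contradiction shows that no such $d$ can exist, which is the desired conclusion.

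The only thing worth verifying carefully is that the hypotheses line up: \cref{thm:commutator_lower_bound_kernel} is stated exactly for $\set{A = I_n, B, C}\subseteq\H^n$ and concludes non-ASDC-ness of the $0_d$-padded triple whenever $d < \rank([B,C])/2$, so the match is exact and no perturbation or reduction is needed. There is no real obstacle here — the corollary is a repackaging of \cref{thm:commutator_lower_bound_kernel} through the ``$d$-RSDC $\Rightarrow$ $\oplus 0_d$ ASDC'' bridge of \cref{lem:asdc_almost_rsdc}, with all of the substantive content (the commutator rank obstruction and the limiting congruence argument) already carried out in the proof of \cref{thm:commutator_lower_bound_kernel}.
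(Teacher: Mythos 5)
Your proof is correct and matches the intended approach. The paper states the corollary immediately after \cref{thm:commutator_lower_bound_kernel} with the remark that ``this same construction can be viewed as an obstruction'' and gives no separate proof, precisely because the argument is exactly the one you give: the first claim is the spectral theorem applied to the Hermitian matrices $B = A^{-1}B$ and $C = A^{-1}C$, and the second claim follows by contraposition through the ``$d$-RSDC $\Rightarrow$ $\oplus\, 0_d$ is ASDC'' direction of \cref{lem:asdc_almost_rsdc}, which contradicts \cref{thm:commutator_lower_bound_kernel}.
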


\begin{remark}\label{rem:existence_of_large_commutator}
Note that for all $n\in\N$, there exist $B,\,C\in\S^{2n}$ such that $\rank([B,C]) = 2n$. For example, set
\begin{align*}
B = \begin{pmatrix}
	I_n &\\&-I_n
\end{pmatrix},\qquad
C = \begin{pmatrix}
	&I_n\\I_n&
\end{pmatrix}.
\end{align*}
Then, $\set{A=I_{2n},B,C}\subseteq\S^{2n}$ is a nonsingular symmetric triple such that $A^{-1}B$ and $A^{-1}C$ are both diagonalizable.
On the other hand, \cref{cor:krsdc_obstruction,thm:commutator_lower_bound_kernel} imply that
\begin{align*}
\set{\begin{pmatrix}
	A&\\&0_{n-1}
\end{pmatrix},\, \begin{pmatrix}
	B&\\&0_{n-1}
\end{pmatrix},\, \begin{pmatrix}
	C&\\&0_{n-1}
\end{pmatrix}}
\end{align*}
is not ASDC and $\set{A,B,C}$ is not $(n-1)$-RSDC.\mathprog{\qed}
\end{remark}

\subsection{Nonsingular septuples}
\label{subsec:nonsing_obstruction}
We may reinterpret \cref{thm:asdc_nonsingular_triple,thm:asdc_characterization_invertible} as saying that if $\cA$ satisfies $\dim(\spann(\cA))\leq 3$ and contains an invertible matrix $S$, then $\cA$ is ASDC if and only if $S^{-1}\cA$ consists of a set of commuting matrices with real eigenvalues.
A natural question to ask is whether the same statement holds without any assumption on the dimension of the span of $\cA$.
\cref{thm:obstruction_m=7_invertible} below presents an obstruction to generalizations in this direction. Specifically, \cref{thm:obstruction_m=7_invertible} constructs a non-ASDC set $\cA = \set{A_1,\dots, A_7}\subseteq\S^6$ where $A_1$ is invertible and $A_1^{-1}\cA$ consists of a set of commuting matrices with real eigenvalues.

The following lemma adapts a technique introduced by \citet{omeara2006approximately} for studying the almost simultaneously diagonalizable via similarity property of subsets of $\C^{n\by n}$.
\begin{lemma}
\label{lem:sdc_real_algebra_dimension}
Let $\cA=\set{A_1,\dots,A_m}\subseteq\S^n$ where $A_1\in\cA$ is invertible.
If $\cA$ is SDC, then $\dim(\R[A_1^{-1}\cA])\leq n$. Here, $\R[A_1^{-1}\cA]$ is the real algebra generated by $A_1^{-1}\cA$.
\end{lemma}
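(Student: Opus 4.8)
The plan is to reduce to the case where all matrices in $\cA$ are simultaneously diagonal, and then bound the dimension of the generated real algebra by counting ``eigenvalue patterns.'' First I would invoke the SDC hypothesis: there is an invertible $P\in\C^{n\by n}$ such that $D_i \coloneqq P^*A_iP$ is diagonal for every $i\in[m]$. Crucially, since $A_1$ is invertible, $D_1$ is an invertible real diagonal matrix, and a direct computation gives $P^{-1}(A_1^{-1}A_i)P = (P^*A_1P)^{-1}(P^*A_iP) = D_1^{-1}D_i$, which is again diagonal (with real entries). Therefore $P^{-1}\bigl(\R[A_1^{-1}\cA]\bigr)P = \R[\,D_1^{-1}D_2,\dots,D_1^{-1}D_m\,]$, and since conjugation by $P$ is a real-algebra isomorphism, it suffices to bound the dimension of the real algebra generated by a finite collection of real diagonal $n\by n$ matrices.

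The key step is the following observation about algebras of diagonal matrices. Let $E_1,\dots,E_{m-1}$ be real diagonal matrices with diagonal entries $E_k = \Diag(e_{k,1},\dots,e_{k,n})$. For each coordinate $j\in[n]$, form the ``pattern vector'' $v_j = (e_{1,j},\dots,e_{m-1,j})\in\R^{m-1}$. Every element of $\R[E_1,\dots,E_{m-1}]$ is a real polynomial in the $E_k$'s (including the constant term from $I$, which is $\R[\emptyset]$'s contribution, i.e. the algebra contains $I$), hence is itself diagonal with $j$th diagonal entry of the form $p(v_j)$ for a common polynomial $p\in\R[t_1,\dots,t_{m-1}]$. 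Thus the algebra is contained in the linear span of the evaluation maps $p\mapsto (p(v_1),\dots,p(v_n))$, which is exactly the coordinate projection of $\R^n$ onto the distinct values among $v_1,\dots,v_n$. Consequently $\dim\R[E_1,\dots,E_{m-1}] \le \#\{\text{distinct } v_j\} \le n$. Applying this with $E_k = D_1^{-1}D_{k+1}$ yields $\dim\R[A_1^{-1}\cA]\le n$.

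I expect the main (minor) obstacle to be bookkeeping about whether the real algebra generated by a set is taken to contain the identity: if it does, the bound $n$ is immediate as above; if one uses the non-unital algebra, the bound is still at most $n$ since the span of all non-constant monomials in the $v_j$ together with what is forced still lives inside $\R^n$ and one can only lose dimensions. Since the lemma only asserts the upper bound $n$ and is used later merely as an obstruction criterion (comparing against $\dim(\R[A_1^{-1}\cA])$ for an explicit five-tuple), the weaker unital reading is harmless and in fact cleanest. A second small point worth stating carefully is that $D_1$ being invertible and real diagonal makes $D_1^{-1}D_i$ genuinely diagonal — this is where the hypothesis ``$A_1$ invertible'' is used, and it is essential, since for general SDC sets without an invertible member one would first pass to $\spann(\cA)$, pick a max-rank element, and restrict via Proposition~\ref{prop:sdc_characterization}; but here the hypothesis hands us $A_1$ directly.
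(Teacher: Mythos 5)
Your proof is correct and follows essentially the same approach as the paper's: conjugate by the SDC matrix $P$ to move into the world of real diagonal matrices (using the key identity $P^{-1}(A_1^{-1}A_i)P = (P^*A_1P)^{-1}(P^*A_iP) = D_1^{-1}D_i$, which needs $A_1$ invertible exactly as you flag), observe that conjugation is a real-algebra isomorphism, and then bound the dimension because everything now lives inside the $n$-dimensional commutative algebra of diagonal $n\by n$ matrices. The paper states this in two lines; your ``pattern vector'' step is correct but heavier than needed --- once you know every generator is a real diagonal matrix, the algebra they generate is automatically a subspace of the diagonal matrices, so $\dim \leq n$ is immediate without tracking distinct evaluation patterns. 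Your remark about the unital/non-unital reading is a sensible precaution, but as you note it is harmless here since both sit inside the same $n$-dimensional ambient space.
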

\begin{proof}
Let $P$ denote the invertible matrix furnished by SDC and suppose $P^\intercal A_i P = D_i$. Then,
\begin{align*}
\dim\left(\R\left[A_1^{-1}\cA\right]\right)
&= \dim\left(\R\left[\set{D_1^{-1}D_i:\, i\in[m]}\right]\right)\leq n.\qedhere
\end{align*}
\end{proof}

The following corollary then follows by lower semi-continuity.
\begin{corollary}
\label{cor:asdc_real_algebra_dimension}
Let $\cA=\set{A_1,\dots,A_m}\subseteq\S^n$ where $A_1\in\cA$ is invertible.
If $\cA$ is ASDC, then $\dim(\R[A_1^{-1}\cA])\leq n$. Here, $\R[A_1^{-1}\cA]$ is the real algebra generated by $A_1^{-1}\cA$.
\end{corollary}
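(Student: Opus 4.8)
The plan is to upgrade \cref{lem:sdc_real_algebra_dimension} from SDC sets to ASDC sets by a perturbation argument, using that the map $(M_1,\dots,M_m)\mapsto\dim(\R[M_1,\dots,M_m])$ is lower semicontinuous on $(\C^{n\by n})^m$.

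First I would produce good approximants. Since $\cA$ is ASDC and $A_1$ is invertible, \cref{obs:asdc_finite} gives, for each $j\in\N$, matrices $\tilde A_1^{(j)},\dots,\tilde A_m^{(j)}\in\H^n$ with $\norm{A_i-\tilde A_i^{(j)}}\le 1/j$ for all $i\in[m]$ and $\set{\tilde A_1^{(j)},\dots,\tilde A_m^{(j)}}$ SDC. For $j$ large, $\tilde A_1^{(j)}$ is invertible (its spectrum converges to that of the nonsingular $A_1$), so by continuity of inversion at $A_1$ we get $(\tilde A_1^{(j)})^{-1}\tilde A_i^{(j)}\to A_1^{-1}A_i$ for every $i\in[m]$. \cref{lem:sdc_real_algebra_dimension} then yields $\dim\bigl(\R[(\tilde A_1^{(j)})^{-1}\tilde{\cA}^{(j)}]\bigr)\le n$ for all large $j$, where $\tilde{\cA}^{(j)}=\set{\tilde A_1^{(j)},\dots,\tilde A_m^{(j)}}$.

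Next I would establish the lower semicontinuity claim: if $M_i^{(j)}\to M_i$ in $\C^{n\by n}$ for each $i\in[m]$, then $\dim(\R[M_1,\dots,M_m])\le\liminf_{j}\dim(\R[M_1^{(j)},\dots,M_m^{(j)}])$. The algebra $\R[M_1,\dots,M_m]$ is a real subspace of the $2n^2$-dimensional real vector space $\C^{n\by n}$ that is spanned by noncommutative monomials in the $M_i$, so it has a basis $w_1(M),\dots,w_r(M)$ with each $w_\ell$ a fixed monomial and $r=\dim(\R[M_1,\dots,M_m])$. Linear independence over $\R$ of $r$ matrices is the nonvanishing of some $r\by r$ minor of the matrix of their real coordinates, an open condition; since $w_\ell(M^{(j)})\to w_\ell(M)$, for large $j$ the matrices $w_1(M^{(j)}),\dots,w_r(M^{(j)})$ are $\R$-independent elements of $\R[M_1^{(j)},\dots,M_m^{(j)}]$, so that algebra has dimension $\ge r$, which proves the claim.

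Finally I would combine the two steps: applying the claim with $M_i^{(j)}=(\tilde A_1^{(j)})^{-1}\tilde A_i^{(j)}$ and $M_i=A_1^{-1}A_i$,
\begin{align*}
\dim\bigl(\R[A_1^{-1}\cA]\bigr)\ \le\ \liminf_{j\to\infty}\dim\bigl(\R[(\tilde A_1^{(j)})^{-1}\tilde{\cA}^{(j)}]\bigr)\ \le\ n.
\end{align*}
I do not expect a genuine obstacle in this argument; the only points deserving a little care are that the generated algebra is finite-dimensional and spanned by monomials (so that ``a basis consisting of monomials'' is legitimate), and that \cref{obs:asdc_finite} genuinely supplies \emph{Hermitian} approximants with $\tilde A_1^{(j)}$ invertible---the Hermitian structure being exactly what allows \cref{lem:sdc_real_algebra_dimension} to deliver the bound $n$ rather than a weaker one.
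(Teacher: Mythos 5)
Your argument is correct and is exactly the one the paper intends: the paper simply says \cref{cor:asdc_real_algebra_dimension} ``follows by lower semi-continuity'' from \cref{lem:sdc_real_algebra_dimension}, and your write-up supplies precisely that — SDC approximants from ASDC, invertibility and continuity of inversion at $A_1$, the lemma applied to each approximant, and lower semicontinuity of $\dim\bigl(\R[\,\cdot\,]\bigr)$ via a monomial basis and an open minor condition. No gap.
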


\begin{theorem}
\label{thm:obstruction_m=7_invertible}
There exists a set $\cA = \set{A_1,\dots,A_7}\subseteq\S^{6}$ such that $A_1$ is invertible, $A_1^{-1}\cA$ is a set of commuting matrices with real eigenvalues, and $\cA$ is \emph{not} ASDC.\footnote{The analogous example in the complex setting states that there exists a set $\cA=\set{A_1,\dots,A_5}\subseteq\H^4$ such that $A_1$ is invertible, $A_1^{-1}\cA$ is a set of commuting matrices with real eigenvalues, and $\cA$ is not ASDC. See \cref{sec:hermitian_proofs}.}
\end{theorem}
\begin{proof}
Set
\begin{gather*}
A_1 = \begin{smallpmatrix}
        &&&&&1\\
        &&&&1&\\
        &&&1\\
        &&1&\\
        &1&&\\
        1&&&
\end{smallpmatrix},\quad
A_2 = \begin{smallpmatrix}
        0&&&\\
        &0&&\\
        &&0&&&\\
        &&&1&&\\
        &&&&0\\
        &&&&&0
\end{smallpmatrix},\quad
A_3 = \begin{smallpmatrix}
        0&&&\\
        &0&&\\
        &&0&&&\\
        &&&&1\\
        &&&1&\\
        &&&&&0
\end{smallpmatrix},\\
A_4 = \begin{smallpmatrix}
        0&&&\\
        &0&&\\
        &&0&&&\\
        &&&&&1\\
        &&&&0\\
        &&&1
\end{smallpmatrix},\quad
A_5 = \begin{smallpmatrix}
        0&&&\\
        &0&&\\
        &&0&&&\\
        &&&0\\
        &&&&1\\
        &&&&&0
\end{smallpmatrix},\\
A_6 = \begin{smallpmatrix}
        0&&&\\
        &0&&\\
        &&0&&&\\
        &&&0\\
        &&&&&1\\
        &&&&1
\end{smallpmatrix},\quad
A_7 = \begin{smallpmatrix}
        0&&&\\
        &0&&\\
        &&0&&&\\
        &&&0\\
        &&&&0\\
        &&&&&1
\end{smallpmatrix}.
\end{gather*}
Note that $A_1$ is invertible. It is not hard to verify that $A_1^{-1}\cA$ forms a set of commuting matrices with real eigenvalues. On the other hand, note that
\begin{align*}
\R[A_1^{-1}\cA] &= \set{\begin{smallpmatrix}
	a &&&d & f & g \\
	&a&&c & e & f\\
	&&a&b & c & d\\
	&&&a\\
	&&&&a&\\
	&&&&&a
\end{smallpmatrix}:\,a,b,c,d,e,f,g\in\R}
\end{align*}
so that $\dim(\R[A_1^{-1}\cA]) = 7>6 = n$. We deduce from \cref{cor:asdc_real_algebra_dimension} that $\cA$ is \emph{not} ASDC.
\end{proof}




\section{Applications to QCQPs}
\label{sec:application}
In this section, we suggest several applications of diagonalization to optimizing QCQPs.
We begin by proving properties regarding the SDP and SOCP relaxations of diagonal QCQPs with bound constraints. Note that QCQPs with bound constraints are the main problems of interest within spatial branch and bound (BB) schemes for QCQPs. These results give heuristic reasons why one might expect the SOCP relaxation to give strong yet efficiently computable lower bounds within BB schemes. These results serve as additional motivation for the ASDC and $d$-RSDC properties. We then examine these assertions numerically with preliminary computational experiments.



\subsection{The SOCP relaxation of a diagonal QCQP with bound constraints}
\label{sec:applicationtheory}
Consider solving a QCQP over $\R^n$ of the form \eqref{eq:intro_qcqp} within a BB scheme. At each node of the BB tree, we encounter the original QCQP with additional bound constraints,
\begin{align}
\label{eq:qcqp_with_bounds}
\inf_{x\in\R^n}\set{x^\intercal A_1 x + 2b_1^\intercal x + c_1:\, \begin{array}
        {l}
        x^\intercal A_i x + 2b_i^\intercal x + c_i \,\boxempty_i\, 0,\,\forall i\in[2,m]\\
        x\in\cL\\
        x \in[\ell, u]
\end{array}},
\end{align}
and desire to construct and solve strong convex relaxations of \eqref{eq:qcqp_with_bounds}.

One powerful method for constructing such convex relaxations
combines the reformulation-linearization technique with semidefinite programming~\cite{anstreicher2009semidefinite}.
We begin by linearizing \eqref{eq:qcqp_with_bounds} using the variable $Y = xx^\intercal$. Specifically, replace $x^\intercal A_i x$ with $\ip{A_i,Y}$ and include the constraint $Y = xx^\intercal$. Then, relax the pair of constraints $x\in[\ell,u]$ and $Y = xx^\intercal$ to the constraint that $(x,Y)$ belong to the set
\begin{align*}
\cS_\textup{SDP} \coloneqq \set{(x,Y)\in\R^n\times \S^n:\, \begin{array}
        {l}
        Y_{i,j} \geq \ell_jx_i + \ell_i x_j -\ell_j\ell_j,\,\forall i,j\\
        Y_{i,j} \leq u_jx_i + \ell_ix_j - u_j\ell_i,\,\forall i,j\\
        Y_{i,j} \geq u_jx_i +u_ix_j - u_iu_j,\,\forall i,j\\
        Y\succeq xx^\intercal
\end{array}}.
\end{align*}
The SDP+RLT relaxation then reads
\begin{align}
\label{eq:qcqp_with_bounds_relaxation}
\inf_{x\in\R^n,Y\in\S^n}\set{\ip{A_1,Y}+ 2b_1^\intercal x + c_1:\, \begin{array}
        {l}
        \ip{A_i, Y} + 2b_i^\intercal x + c_i \,\boxempty_i\, 0,\,\forall i\in[2,m]\\
        x\in\cL\\
        (x,Y)\in\cS_\textup{SDP}
\end{array}}.
\end{align}

Note that for diagonal QCQPs (i.e., the setting where $A_1,\dots,A_m$ are diagonal) that $\ip{A_i,Y}=\diag(A_i)^\intercal \diag(Y)$ so that the SDP+RLT relaxation does not depend on the off-diagonal entries of $Y$. In particular, we may replace the variable $Y\in\S^n$ with a variable $y\in\R^n$ representing its diagonal entries. Naturally, we then replace the term
the term $\ip{A_i,Y}$ and the constraint $(x,Y)\in\cS_\textup{SDP}$ with the term $\diag(A_i)^\intercal y$ and the constraint
\begin{align*}
(x,y)\in \cS_\textup{SOCP} \coloneqq \set{(x,\diag(Y)):\, (x,Y)\in\cS_\textup{SDP}}.
\end{align*}
The following lemma shows that $\cS_\textup{SOCP}$ is SOC-representable so that the resulting relaxation
is in fact an SOCP. Thus, the SDP+RLT relaxation of a QCQP with bound constraints can be solved substantially faster when $A_1,\dots,A_m$ are diagonal.

In the remainder of this section, let $\circ$ denote the elementwise product between two vectors.
{The following lemma is elementary and is not new (see for example \cite[Section 2]{burer2015gentle}).}
\begin{lemma}
\label{lem:sdp_rlt_socp}
The following identities hold
\begin{align*}
\cS_\textup{SOCP} &\coloneqq \set{(x,\diag(Y)):\, (x,Y)\in\cS_\textup{SDP}}\\
&= \set{(x,y)\in\R^n\times\R^n:\, \begin{array}
        {l}
        x\circ x \leq y \leq (u+\ell)\circ x - u\circ \ell
\end{array}}\\
&= \conv\set{(x,y)\in\R^n\times\R^n:\, \begin{array}
        {l}
        x\in[\ell,u]\\
        x \circ x = y
\end{array}}.
\end{align*}
In particular, $\cS_\textup{SOCP}$ is SOC-representable.
\end{lemma}
\begin{proof}
For notational convenience, let $\cS_1,\,\cS_2,\,\cS_3$ denote the three sets on the right in order.
Note $\cS_\textup{SOCP} = \cS_1$ by definition.
We will show $\cS_1\subseteq \cS_2=\cS_3 \subseteq\cS_1$.

The containment $\cS_1\subseteq\cS_2$ follows by definition: Given $(x,Y)\in\cS_\textup{SDP}$, we have that $\diag(Y)\geq x\circ x$ by the SDP constraint and $\diag(Y) \leq (u+\ell)\circ x - u\circ \ell$ by the RLT constraints.

The identity $\cS_2 =\cS_3$ follows the well-known (and easy to verify) fact that in one-dimension
\begin{align*}
\set{(x_i,y_i)\in\R^2:\, \begin{array}
        {l}
        y_i\geq x_i^2\\
        y_i \leq (u_i+\ell_i)x_i - u_i\ell_i
\end{array}} = \conv\set{(x_i,y_i)\in\R^2:\,\begin{array}{l}
        x_i\in[\ell_i,u_i]\\
        x_i^2 = y_i
\end{array}}
\end{align*}
and the fact that direct products commute with convex hulls.

Finally, suppose $(x,y)\in\R^n\times\R^n$ satisfies $x\in[\ell,u]$ and $y=x\circ x$. Set $Y = xx^\intercal$ so that $\diag(Y) = y$. It is straightforward to show that $(x,Y)\in\cS_\textup{SDP}$ so that $(x,y)\in\cS_1$. Then as $\cS_1$ is convex, we have that $\cS_3 \subseteq\cS_1$.
\end{proof}
The following corollary shows how to construct optimizers of \eqref{eq:qcqp_with_bounds_relaxation} with bounded rank when $A_1,\dots,A_m$ are diagonal.
The bound depends only on $m$ and the complexity of $\cL\cap [\ell,u]$.
This gives a heuristic reason why one would expect the SDP+RLT relaxation (and hence the SOCP+RLT relaxation) of a diagonal QCQP with bound constraints to be stronger than that of a general QCQP with bound constraints, especially when $m$ is small and $\cL$ is simple.

\begin{lemma}
Suppose $A_1,\dots,A_m$ are diagonal and that $\cL\cap[\ell,u]$ can be expressed as the intersection of $[\ell,u]$ with $k$ additional linear (in)equality constraints. If \eqref{eq:qcqp_with_bounds_relaxation} has a solution, then it has a solution $(x^*,Y^*)$ such that
\begin{align*}
\rank\begin{pmatrix}
        Y^* & x^*\\
        x^{*\intercal} & 1
\end{pmatrix}\leq m + k.
\end{align*}
\end{lemma}
\begin{proof}
By assumption there exists an affine function $L:\R^n\to\R^k$ such that
\begin{align*}
[\ell,u]\cap \cL = [\ell,u] \cap \set{x\in\R^n:\, L(x)_i \,\boxempty_i\, 0,\,\forall i\in[k]}
\end{align*}
where $\,\boxempty_i\,\in\set{\leq,=}$.
Define $Q:\R^n\times\R^n\to\R^m$ by
\begin{align*}
Q(x,y)_i = \ip{\diag(A_i),y} + 2b_i^\intercal x + c_i,\,\forall i\in[m]
\end{align*}
and let $\tilde Q: \R^n\times\R^n\to\R^{m+k}$ denote the affine function mapping $(x,y) \mapsto (Q(x,y),L(x))$.

Let $(x^*, Y^*)$ denote an optimizer of \eqref{eq:qcqp_with_bounds_relaxation} and set $y^* = \diag(Y^*)$.
By \cref{lem:sdp_rlt_socp}, there exist points $x^{(i)}\in[\ell,u]$ and convex combination weights $\alpha_i > 0$ such that $(x^*,y^*) = \sum_i \alpha_i (x^{(i)},x^{(i)}\circ x^{(i)})$.
Then, by linearity, we have $\tilde Q(x^*,y^*) = \sum_i \alpha_i \tilde Q(x^{(i)}, x^{(i)}\circ x^{(i)})$.

We claim that $\set{\tilde Q(x^{(i)}, x^{(i)}\circ x^{(i)})}_i$ span an affine subspace of dimension $<m+k$.
Indeed, supposing otherwise, $\tilde Q(x^*,y^*)$ is in the interior of $\conv\set{\tilde Q(x^{(i)}, x^{(i)}\circ x^{(i)})}_i$. Thus, there exists $(x',y')\in\cS_\textup{SOCP}$ achieving $\tilde Q(x',y') = \tilde Q(x^*,y^*) - \epsilon e_1$ for some $\epsilon>0$. This contradicts optimality of $(x^*,Y^*)$.

Applying Carath\'eodory's Theorem, $(x^*,y^*)$ is a convex combination of at most $m + k$ points from $\set{(x^{(i)},x^{(i)}\circ x^{(i)})}$, say $(x^*,y^*) = \sum_{i=1}^{m+k}\alpha_i (x^{(i)},x^{(i)}\circ x^{(i)})$. Then,
\begin{align*}
\sum_{i=1}^{m+k} \alpha_i \begin{pmatrix}
        x^{(i)}x^{(i)\intercal} & x^{(i)}\\
        x^{(i)\intercal} & 1
\end{pmatrix}
\end{align*}
is an optimal solution to \eqref{eq:qcqp_with_bounds_relaxation} with rank $\leq m+k$.
\end{proof}



\subsection{Numerical results}
\label{sec:experiments}
In this subsection, we present preliminary numerical results on diagonalization and the $d$-RSDC property in solving QCQP problems with one quadratic constraint and additional linear constraints.
This restricted class of QCQPs is still NP-hard in general---as mentioned in the introduction, even the problem of minimizing a general quadratic function over the hypercube is NP-hard.
{Problems in this form, and their variants, have received recent interest in the area of optimal portfolio deleveraging \cite{luo2020effective}, the trust region method for solving constrained optimization problems, robust optimization problems under matrix norm or polyhedral uncertainty~\cite{jeyakumar2014trust}, portfolio selection models with complementarity constraints in the presence of linear transaction costs~\cite{xu2023simultaneous,braun2005semidefinite}, and as subproblems in iterative algorithms for more general optimization problems~\cite{huang2016consensus,bienstock2014polynomial}.}


We will consider random instances of the following problem
\begin{align}
\label{pb:qcqp}
\min_{x\in\R^n}\set{x^\intercal A_1 x:\, \begin{array}
        {l}
        x^\intercal  A_2x+2b_2^\intercal x \leq 1\\
        x\in \cL
\end{array}}
\end{align}
where $A_1,\,A_2\in \S^n$, $b_2\in \R^n$, and $\cL\subseteq\R^n$ is a polytope.

\paragraph{Random model.}
We will consider a family of distributions over instances of \eqref{pb:qcqp} parameterized by $n\in\N$ and $k\in\N_0$.
Here, $k$ will parameterize the number of (pairs of) complex eigenvalues of $A_1^{-1}A_2$.
Specifically, given $(n,k)$ such that $2k\leq n$:
\begin{enumerate}
        \item Let $r = n - 2k$.
        \item
        Generate a random orthogonal matrix $V$ by taking $M$ to be a random $n\by n$ matrix with entries i.i.d.\ $N(0,1)$ and then taking $V$ to be a matrix of left singular vectors of $M$.
        Let $\sigma_1,\dots,\sigma_r$ be i.i.d.\ Rademacher random variables.
        Let $\mu_1,\dots,\mu_r$ be i.i.d.\ $N(0,1)$ random variables.
        Let $x_1,\dots,x_k,y_1,\dots,y_k$ be i.i.d.\ $N(0, 1)$ random variables.
        Then, set
        \begin{align*}
        A_1 &= V^\intercal\Diag(\sigma_1,\dots,\sigma_r, F_2,\dots,F_2)V\\
        A_2 &= V^\intercal\Diag(\sigma_1 \mu_1,\dots,\sigma_r \mu_r, T_1,\dots, T_k)V.
        \end{align*}
        Here, $T_i\in\S^2$ is the random matrix $\begin{smallpmatrix}
            x_i & y_i\\
            y_i & -x_i
        \end{smallpmatrix}$.
        \item Let the entries of $b_2$ be i.i.d.\ $N(0,1)$ random variables, and $L=\begin{smallpmatrix}I\\ -I\end{smallpmatrix}N$, where the entries of $N\in \R^{n\times n}$ are i.i.d.\ $N(0,1)$ random variables. This ensures that the set $\cL\coloneqq \{x:Lx\leq 1\}$ is bounded almost surely.
\end{enumerate}
Note that \cref{thm:rsdc_pair} (respectively, \cref{thm:d_rsdc_pair}) implies that $\set{A_1,\,A_2}$ is almost surely 1-RSDC (respectively, $k$-RSDC) in this random model.

\paragraph{Branch and bound methods.} We use BB methods to solve different reformulations of \eqref{pb:qcqp} with and without diagonalization.

We implement two classes of BB methods.
The first class, the SDP-based BB method, uses a \emph{simplified} SDP+RLT relaxation for computing a lower bound at each node. Specifically, we lower bound the value of \eqref{pb:qcqp} with the additional box constraint $x\in[\ell,u]$ by
\begin{equation}
\label{pb:sdprf}
\min_{x\in\R^n,\, Y\in\S^n}\set{\ip{A_1,Y}:\, \begin{array}
        {l}
        \ip{A_i, Y} + 2b_2^\intercal x - 1 \leq 0\\
        x\in\cL\\
        x\in[\ell,u]\\
        \diag(Y) \leq (u + \ell) \circ x - u \circ \ell\\
        Y\succeq xx^\top
\end{array}}.
\end{equation}
At the root node, we set $[\ell,u]$ to be coordinate-wise lower and upper bounds on $\cL$.
Note that in contrast to the \emph{full} SDP+RLT relaxation (see \eqref{eq:qcqp_with_bounds_relaxation}), we only impose RLT constraints on the diagonal entries of $Y$ to strike a balance between bound quality and computational cost.
This method then applies a spatial BB rule for each coordinate $x_i$
and updates the values of $[\ell,u]$.

The second class, the SOCP-based BB methods,
first diagonalize \eqref{pb:qcqp} before applying a BB scheme.
The method of diagonalization differs across the different SOCP-based BB methods but the BB part is the same.
Suppose we have already diagonalized \eqref{pb:qcqp} so that $A_i$ is a diagonal matrix for each $i = 1,2$. Write
$A_i=\Diag(a_i^+) + \Diag(a_i^-)$, where $a_i^+,\,a_i^-\in\R^n$ are nonnegative and nonpositive respectively.
Let $I = \supp(a_1^-)\cup \supp(a_2^-)$.
The SOCP-based BB method uses the SOCP+RLT relaxation for computing a lower bound at each node. Specifically we lower bound the value of \eqref{pb:qcqp} (assuming that the $A_i$s are diagonal) with the additional box constraint $x\in[\ell,u]$ by
\begin{equation}
\label{pb:socprf}
\min_{x\in\R^n,\, y\in\R^{\abs{I}}}\set{x^\intercal\Diag(a_1^+) x + (a_1^-)^\intercal y:\, \begin{array}
        {l}
        x^\intercal \Diag(a_2^+) x + (a_2^-)^\intercal y + 2b_2^\intercal x  -1 \leq 0\\
        x\in\cL\\
        x\in[\ell,u]\\
        x_{j_i}^2 \leq y_i \leq (u_{j_i} + \ell_{j_i}) x_{j_i} - u_{j_i}\ell_{j_i},\,\forall j_i\in I
\end{array}}.
\end{equation}
Again, at the root node, we set $[\ell,u]$ to be coordinate-wise lower and upper bounds on $\cL$.
This method then applies a spatial BB rule for each coordinate $x_{j_i}$ such that $j_i\in I$ and updates $[\ell,u]$.

In both methods, we use a successive convex approximation \cite{marks1978general}, which linearizes nonconvex terms in the quadratic objective and constraint, to attempt to construct feasible solutions and good upper bounds.

In more detail, we implemented the following five BB methods for solving instances of \eqref{pb:qcqp}.
\begin{itemize}
\item \texttt{SDPBB} solves \eqref{pb:qcqp} directly using the SDP-based BB method.
\item \texttt{SDCBB} is a solution method which can only be applied when $\set{A_1,A_2}$ is already SDC. In this case (letting $P$ denote the corresponding invertible matrix), \texttt{SDCBB} reformulates \eqref{pb:qcqp} as
\begin{align}
\label{pb:sdcqcqp}
\min_{x\in\R^n}\set{x^\intercal (P^\intercal A_1P) x :\, \begin{array}        {l}
        x^\intercal  (P^\intercal A_2P)x+2(P^\intercal b_2)^\intercal x \leq 1\\
        LPx \leq 1,
\end{array}}
\end{align}
and solves this reformulation using the SOCP-based BB method.
\item \texttt{1-RSDCBB} applies \cref{alg:1_rsdc} to construct an SDC pair
$\set{\tilde A_1,\,\tilde A_2}\in\S^{n+1}$ whose top-left $n\times n$ principal submatrices are $A_1$ and $A_2$, respectively.
Let $P\in\R^{(n+1)\by (n+1)}$ denote the invertible matrix furnished by the SDC property of $\set{\tilde A_1,\,\tilde A_2}$.
Also, set $\tilde b_2 = (b_2^\intercal, 0)^\intercal$ and $\tilde L = (L, 0_{m,1})$.
Then, \texttt{1-RSDCBB} reformulates \eqref{pb:qcqp} as
\begin{align}
\label{pb:1sdcqcqp}
\inf_{w\in\R^{n+1}}\set{w^\intercal (P^\intercal\tilde  A_1P) w :\, \begin{array}
        {l}
        w^\intercal (P^\intercal \tilde  A_2 P) w+2(P^\intercal\tilde b_2)^\intercal w \leq 1\\
        (\tilde LP)w \leq 1\\
        (Pw)_{n+1} = 0
\end{array}}
\end{align}
and solves this reformulation using the SOCP-based BB method. Note that for this reformulation, $\cL$ is the set of $w\in\R^{n+1}$ satisfying both the linear inequality and linear equality constraints.
%
%
%
%
\item \texttt{k-RSDCBB} applies \cref{alg:d_rsdc} with $d=k$ to construct an SDC pair
$\set{\tilde A_1,\,\tilde A_2}\in\S^{n+k}$ whose top-left $n\times n$ principal submatrices are $A_1$ and $A_2$, respectively.
Let $P\in\R^{(n+k)\by (n+k)}$ denote the invertible matrix furnished by the SDC property of $\set{\tilde A_1,\,\tilde A_2}$.
Also, set $\tilde b_2= (b_2^\intercal, 0_{k,1})^\intercal$ and $\tilde L = (L, 0_{m,k})$.
Then, \texttt{k-RSDCBB} reformulates \eqref{pb:qcqp} as
\begin{align}
\label{pb:ksdcqcqp}
\inf_{w\in\R^{n+1}}\set{w^\intercal (P^\intercal\tilde  A_1P) w :\, \begin{array}
        {l}
        w^\intercal (P^\intercal \tilde  A_2 P) w+2(P^\intercal\tilde b_2)^\intercal w \leq 1\\
        (\tilde LP)w \leq 1\\
        (Pw)_{n+1} = (Pw)_{n+2} = \cdots = (Pw)_{n+k}= 0
\end{array}}
\end{align}
and solves this reformulation using the SOCP-based BB method. Note that for this reformulation, $\cL$ is the set of $w\in\R^{n+k}$ satisfying both the linear inequality and linear equality constraints.
\item \texttt{eigBB} first performs an eigenvalue decomposition on $A_1$ to write $D_1 = P_1^\intercal A_1 P_1$, where $D_1$ is a diagonal matrix. Then, it performs a second eigenvalue decomposition to write $D_2= P_2^\intercal(P_1^\intercal A_2 P_1)P_2$, where $D_2$ is a diagonal matrix. Finally, \texttt{eigBB} reformulates \eqref{pb:qcqp} as
\begin{align}
\label{pb:eigqcqp}
\inf_{y,z\in\R^n}\set{y^\intercal D_1 y :\, \begin{array}
        {l}
        z^\intercal D_2 z + 2(P_1^\intercal b_2)^\intercal y + c_2 \leq 1\\
        (LP_1) y \leq 1\\
        y = P_2 z
\end{array}}
\end{align}
and solves this reformulation using the SOCP-based BB method. Note that for this reformulation, $\cL$ is the set of $(y,z)\in\R^{n}\times\R^n$ satisfying both the linear inequality and linear equality constraints.
\end{itemize}

All experiments are implemented using MATLAB R2021a
on a PC running Windows 10 Intel(R) Core(TM) i9-10900KF CPU (3.70GHz)
and 64GB RAM.
All the SDP and SOCP problems in the BB methods are solved by the commercial solver MOSEK \cite{mosek} through its Matlab interface.

\begin{remark}
\texttt{SDCBB}, \texttt{1-RSDCBB}, \texttt{k-RSDCBB}, and \texttt{eigBB} can be thought of as different reformulations within a parameterized family of reformulations of \eqref{pb:qcqp}. Specifically, these four algorithms reformulate \eqref{pb:qcqp} as diagonal QCQPs with $n$, $n+1$, $n+k$, and $2n$ variables respectively.

{In fact, the reformulation \eqref{pb:eigqcqp} used in \texttt{eigBB} is \emph{exactly} the reformulation that would arise in \eqref{pb:ksdcqcqp} when using the naive $n$-RSDC construction (see \cref{rem:naive_diagionalization,lem:naive_lifting}). Specifically,
if $D_1 = U_1^\intercal A_1 U_1$ and $D_2 = U_2^\intercal A_2 U_2$ where $U_i$ are orthogonal and $D_i$ are diagonal, then $\set{A_1,A_2}$ is $n$-RSDC via the lifting
\begin{align*}
    \bar A_1 = P^{-\intercal} \begin{pmatrix}
    D_1\\
    & 0
    \end{pmatrix} P^{-1}
    \qquad \text{and}\qquad
    \bar A_2 = P^{-\intercal}\begin{pmatrix}
        0\\
        & D_2
        \end{pmatrix}P^{-1},
\end{align*}
where
\begin{align*}
    P^{-\intercal} = \begin{pmatrix}
    U_1 & U_2\\
    0 & I
    \end{pmatrix}\qquad\text{and}\qquad
    P = \begin{pmatrix}
    U_1 & 0\\
    -U_2^\intercal U_1 & I
    \end{pmatrix}.
\end{align*}
Also, set $\tilde b_2= (b_2^\intercal, 0_{n,1})^\intercal$ and $\tilde L = (L, 0_{m,n})$.
Then, the $n$-RSDC reformulation reads as
\begin{align*}
    &\inf_{w=(y,z)\in\R^{2n}}\set{w^\intercal \begin{pmatrix}
        D_1\\
        & 0
        \end{pmatrix} w :\, \begin{array}
        {l}
        w^\intercal \begin{pmatrix}
            0\\
            & D_2
            \end{pmatrix} w+2(P^\intercal\tilde b_2)^\intercal w \leq 1\\
        (\tilde LP)w \leq 1\\
        (Pw)_{n+1} = (Pw)_{n+2} = \cdots = (Pw)_{2n}= 0
\end{array}}\\
&\qquad = \inf_{(y,z)\in\R^{2n}}\set{y^\intercal D_1 y :\, \begin{array}
    {l}
    z^\intercal D_2 z +2b_2^\intercal (U_1y) \leq 1\\
    L(U_1y) \leq 1\\
    y = U_1^\intercal U_2 z
\end{array}}.
\end{align*}
This is exactly the reformulation in \eqref{pb:eigqcqp}.}
\mathprog{\qed}
\end{remark}

\paragraph{Experiment setup.}
We tested the solution methods on random instances for various settings of $(n,k)$.
For $k = 0$, i.e., the case where $A_1$ and $A_2$ are guaranteed to be SDC, we compared \texttt{SDPBB}, \texttt{SDCBB}, and \texttt{eigBB}. 
For $k>0$, we compared \texttt{SDPBB}, \texttt{1-RSDCBB}, \texttt{k-RSDCBB}, and \texttt{eigBB}. For each $(n,k)$, we generated 5 random problems and used the command \texttt{boxplot} in MATLAB to present the statistics.
Each procedure was terminated when the CPU time reached 1800 seconds or when the relative gap (between the objective value of the current solution and the best lower bound) fell below the default tolerance threshold, $10^{-4}$. In all of our figures and tables, we set
\[\mathrm{Gap}=\frac{v_{\rm best}-v_0}{|v_{\rm best}|}\times 100,\]
where $v_0$ is the initial lower bound computed from the corresponding convex relaxation, and $v_{\rm best}$ is the best upper bound computed within the BB method.

\paragraph{Comparison for the SDC case.}
\begin{figure}[H]
\minipage{0.33\textwidth}
\includegraphics[width=\textwidth]{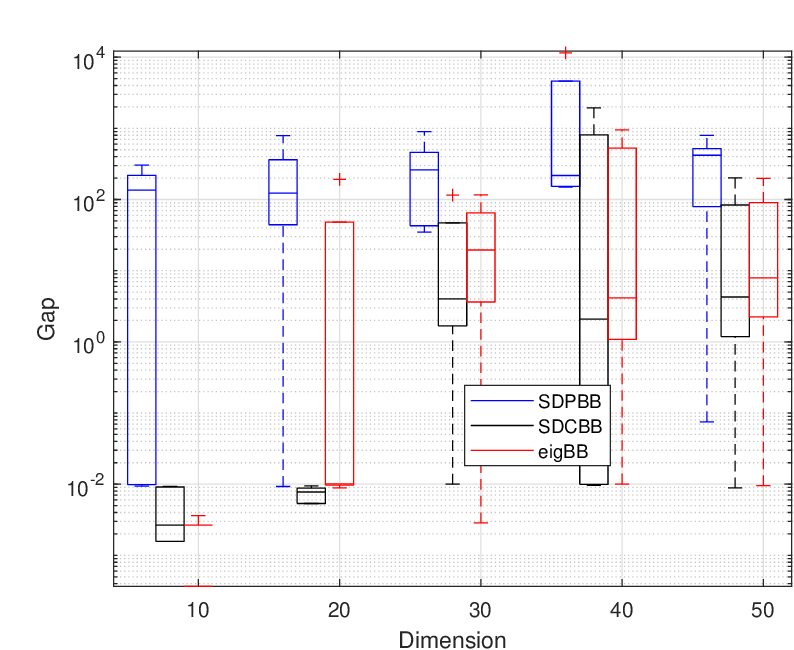}
\endminipage
\minipage{0.33\textwidth}
\center
\includegraphics[width=\textwidth]{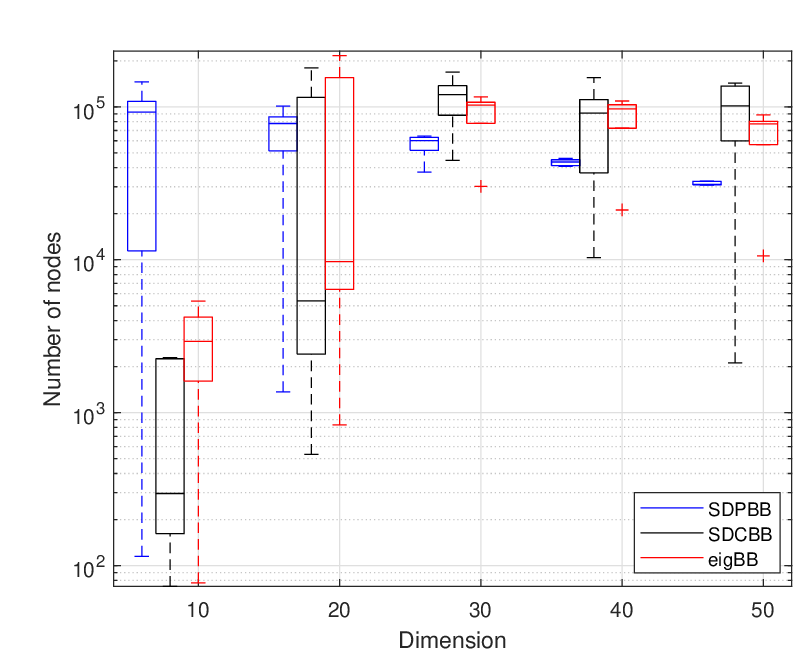}
\endminipage
\minipage{0.33\textwidth}
\raggedright
\includegraphics[width=\textwidth]{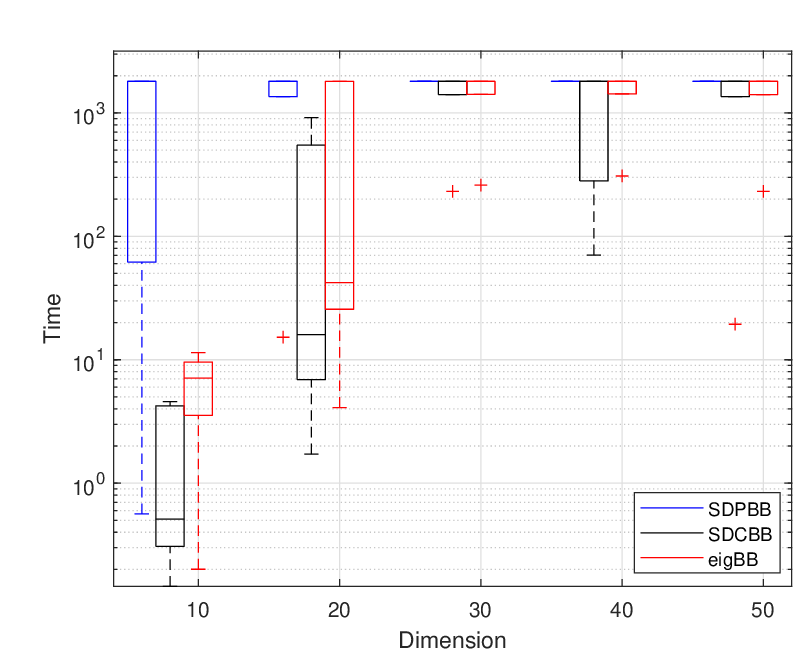}
\endminipage
\caption{Comparison of \texttt{SDPBB}, \texttt{SDCBB} and \texttt{eigBB} for the case with $k=0$.}
\label{fig:keq0}
\end{figure}

We first test instances where $\set{A_1,A_2}$ is SDC, i.e., $k=0$, for $n=10,\,20,\,30,\,40,\,50$.
The results on CPU time, relative gap, and number of explored nodes in the search tree are reported in \cref{fig:keq0}. \cref{fig:keq0} shows us that \texttt{SDCBB} performs the best in general, i.e., \texttt{SDCBB} achieves the lowest relative gap and smallest CPU time across all tested values of $n$.
Both of the SOCP-based methods are much more efficient than
\texttt{SDPBB}. In fact, \texttt{SDPBB} fails to solve any of the instances to relative gap $10^{-4}$ when $n>20$ and fails on four of the five instances with $n=20$. Moreover, for $n=10$, we observe that the SDP-based BB method explores more nodes than either of the SOCP-based BB methods, even though the SDP lower bounds are computationally more expensive than the SOCP lower bounds. Indeed, we will see soon that the SOCP relaxation experimentally yields tighter lower bounds (resulting in fewer search tree nodes) than the SDP relaxation.
We also observe that \texttt{eigBB} is comparable to but slightly less efficient than \texttt{SDCBB}.
Specifically, we note that \texttt{SDCBB} and \texttt{eigBB} explore similar numbers of nodes but that \texttt{SDCBB} does so in comparable or less time.

\begin{figure}[H]
\center
\minipage{0.48\textwidth}
\includegraphics[width=\textwidth]{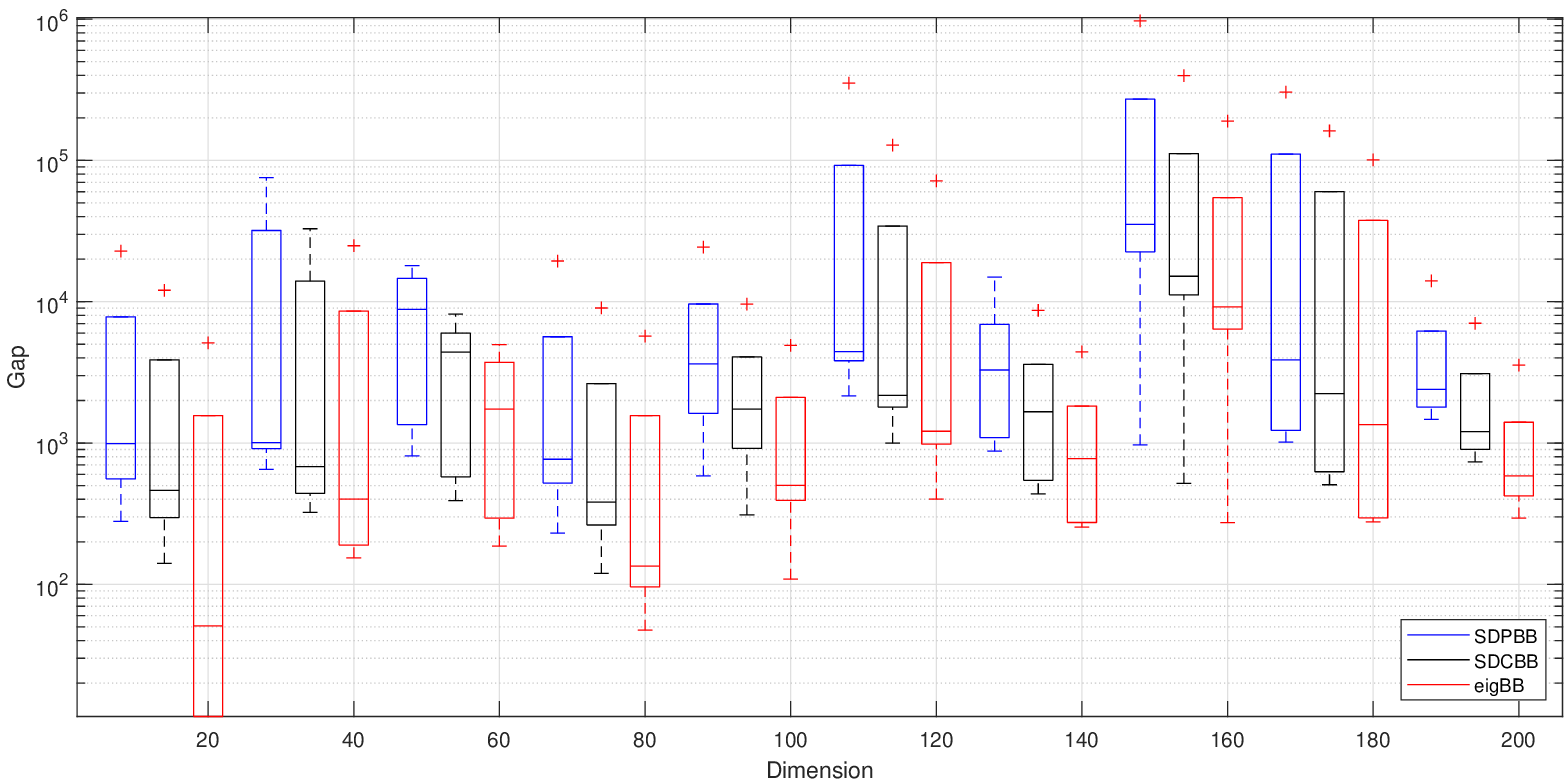}
\endminipage
\quad
\minipage{0.48\textwidth}
\raggedright
\includegraphics[width=\textwidth]{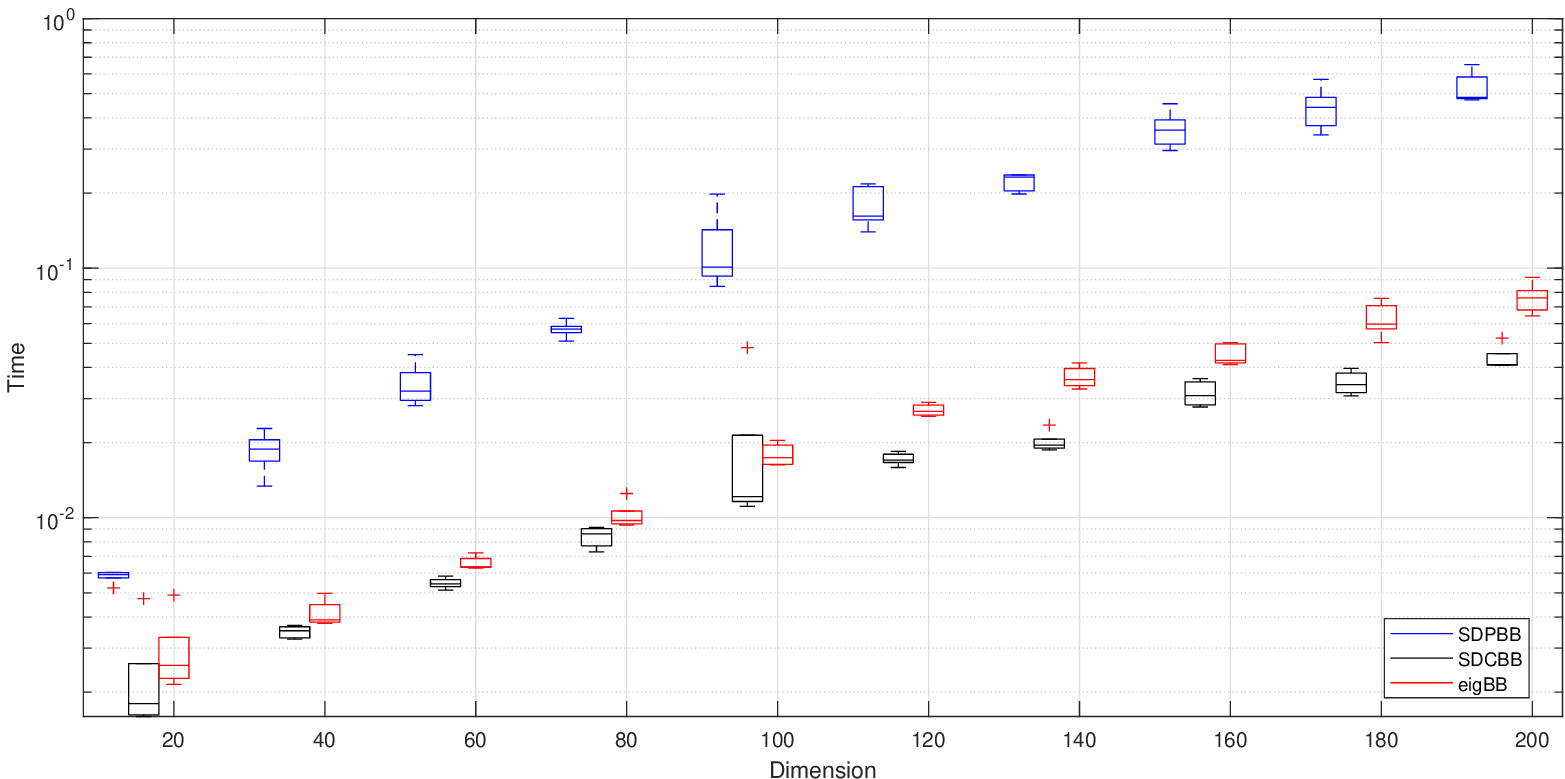}
\endminipage
\caption{Comparison of initial bound and time between SDP and SOCP relaxations for instances of different dimensions.}
\label{fig:sdpsocp}
\end{figure}
To further understand the performance between the SDP-based and SOCP-based BB methods, we compare initial bound quality and CPU time for \texttt{SDPBB}, \texttt{SDCBB} and \texttt{eigBB} in the case $k=0$.
For \cref{fig:sdpsocp} only, define
\[\mathrm{Gap}=\frac{v_{\texttt{SDCBB}}-v_0}{|v_{\texttt{SDCBB}}|}\times100,\]
where $v_0$ is the initial lower bound computed by \texttt{SDPBB}, \texttt{SDCBB} or \texttt{eigBB} and $v_{\texttt{SDCBB}}$ is the best upper bound computed by \texttt{SDCBB} after 1800 seconds.
\cref{fig:sdpsocp} shows that both SOCP relaxations are faster to compute than the SDP relaxation, as expected. More interestingly, both SOCP relaxations provide a \emph{better} initial lower bound as can be seen by the fact that the gap is significantly smaller for the SOCP relaxations than it is for the SDP relaxation.
See \cref{sec:applicationtheory} for heuristic explanations why we would expect this to hold.
Both observations in \cref{fig:sdpsocp} suggest that diagonalization can be used within branch and bound schemes to solve QCQPs more efficiently.

Comparing \texttt{SDCBB} and \texttt{eigBB} in \cref{fig:sdpsocp}, we see that \texttt{eigBB} generally produces tighter lower bounds but \texttt{SDCBB} needs less computation time to solve its relaxation. This
parallels the observation in \cref{fig:keq0} that \texttt{SDCBB} is capable of exploring more nodes than \texttt{eigBB} in similar amounts of time.
We believe that \texttt{SDCBB} solves its relaxation faster simply because its diagonal reformulation is smaller.
Indeed, \texttt{SDCBB} solves an SOCP \eqref{pb:sdcqcqp} with $\left(n + \abs{\supp(a_1^-)\cup \supp(a_2^-)}\right)$-many variables
while \texttt{eigBB} solves an SOCP \eqref{pb:eigqcqp} with roughly twice as many variables: $\left(2n + \abs{\supp(a_1^-)} + \abs{\supp(a_2^-)}\right)$-many variables.

\paragraph{Comparison for the non-SDC case.}
\begin{figure}[H]
\center
\includegraphics[width=0.48\textwidth]{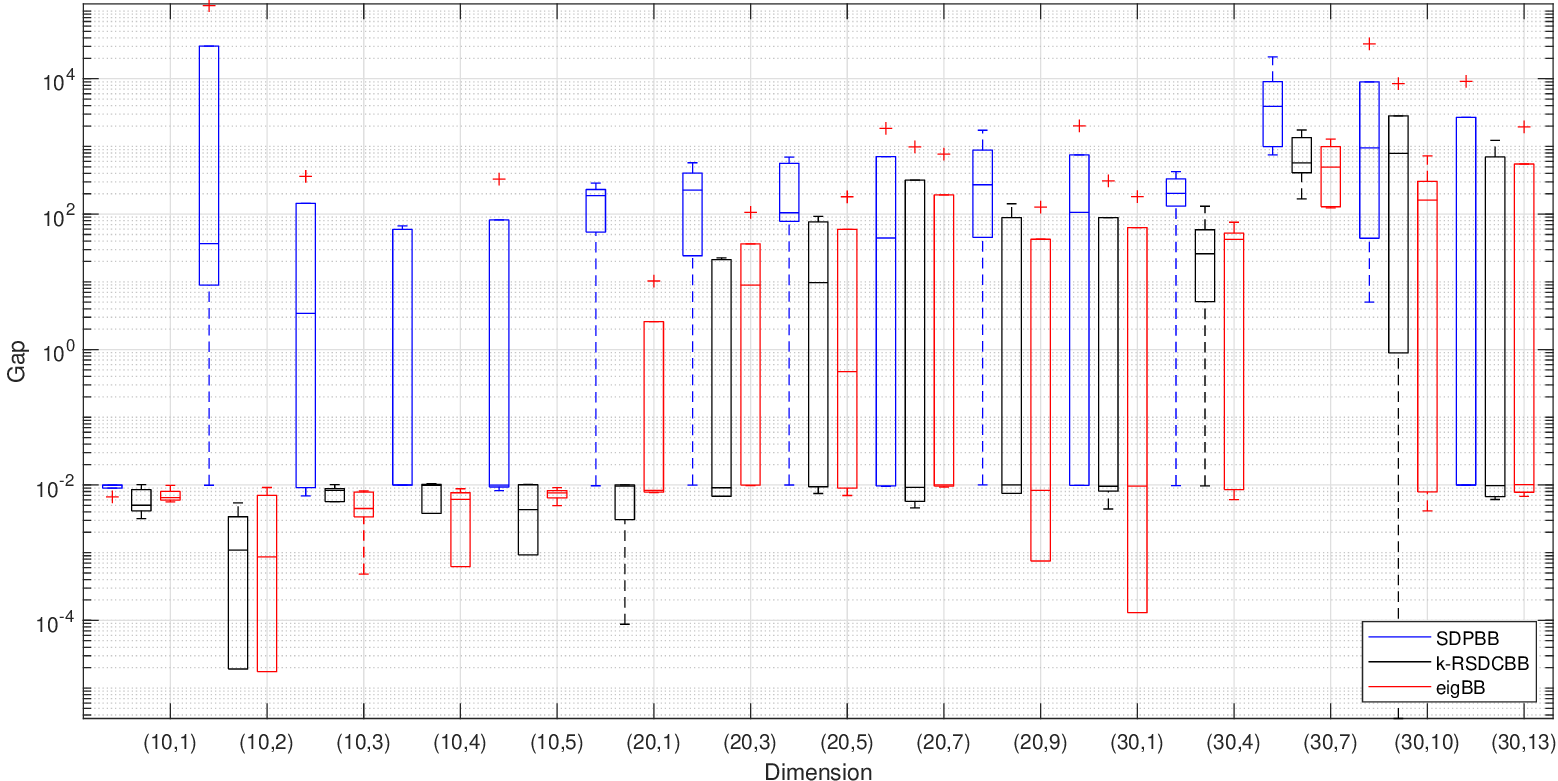}\quad
\includegraphics[width=0.48\textwidth]{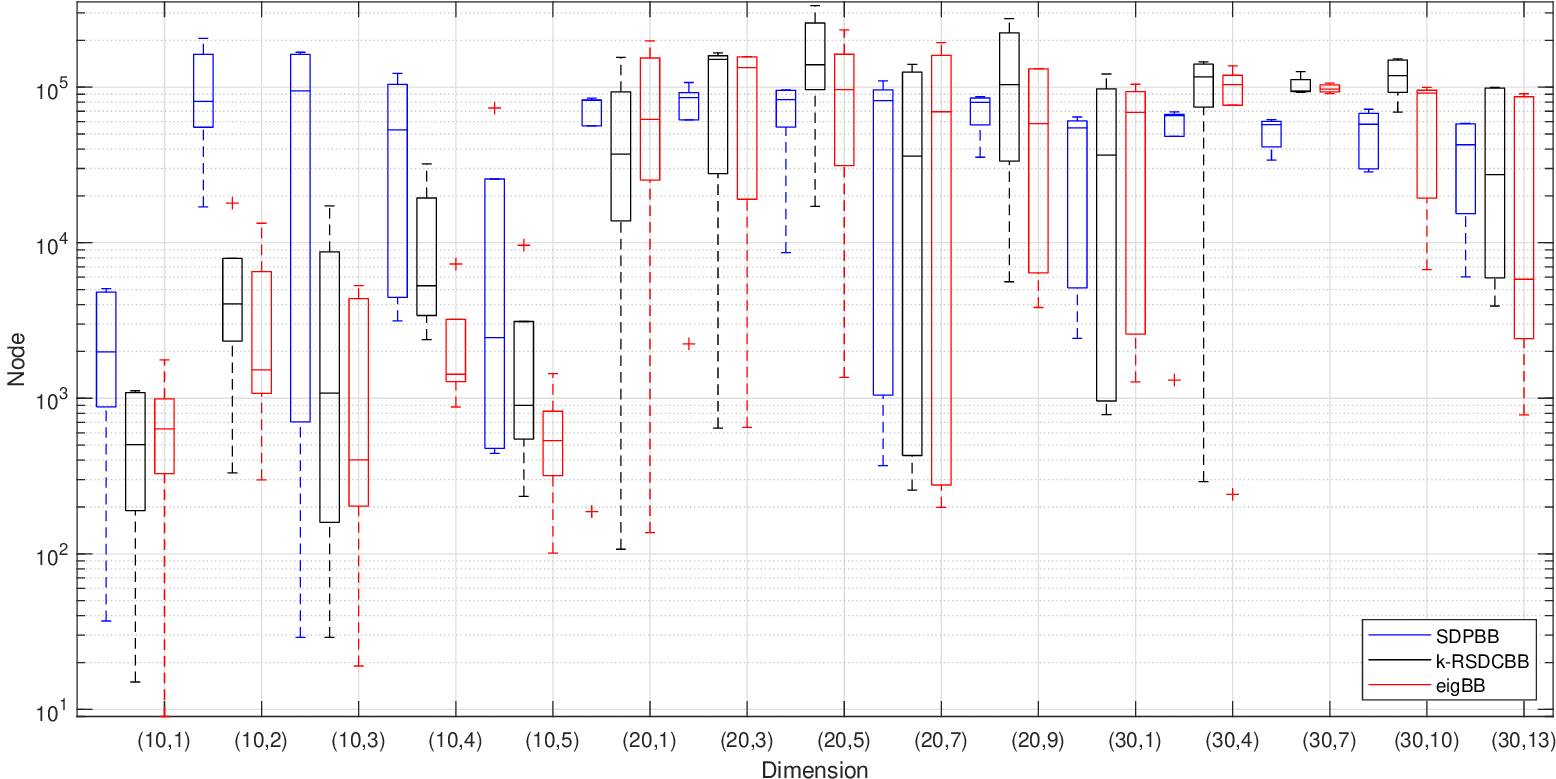}
\\
\includegraphics[width=0.48\textwidth]{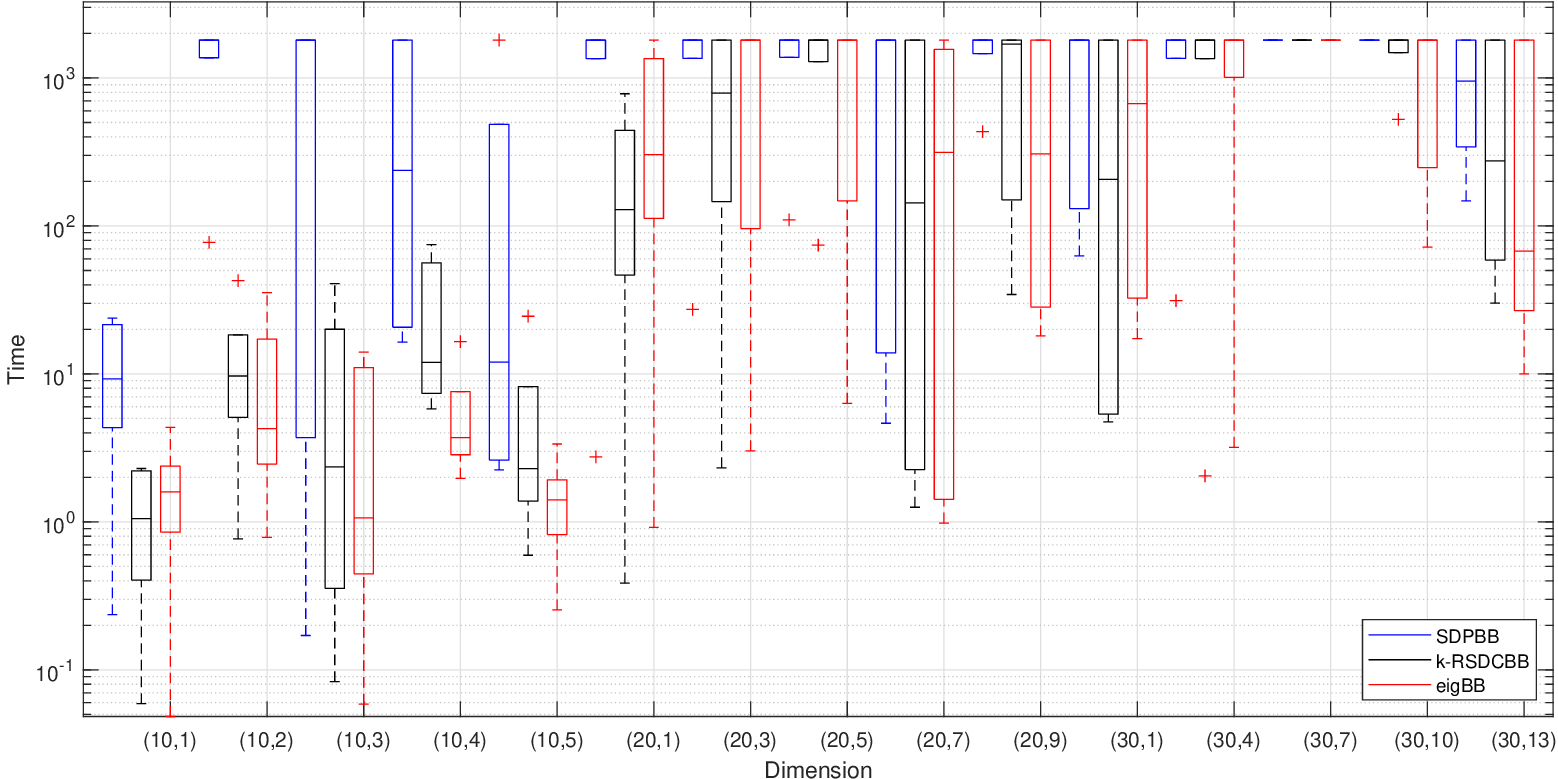}
\caption{Comparison of \texttt{SDPBB}, \texttt{k-RSDCBB} and \texttt{eigBB} for non-SDC instances.}
\label{fig:kge1}
\end{figure}
We now consider the case where $\set{A_1,A_2}$ is not SDC, i.e., $k>0$.
We tested \texttt{SDPBB}, \texttt{1-RSDCBB}, \texttt{k-RSDCBB} and \texttt{eigBB} for $n = 10,\,20,\,30$ and $k = 1,\, 1 + \tfrac{n}{10},\, 1 + \tfrac{2n}{10},\, 1 + \tfrac{3n}{10},\, 1 + \tfrac{4n}{10}$.
The results on CPU time, relative gap, and number of explored nodes in the search tree
for \texttt{SDPBB}, \texttt{k-RSDCBB} and \texttt{eigBB}
are reported in \cref{fig:kge1}.
\cref{fig:kge1} indicates that both \texttt{k-RSDCBB} and \texttt{eigBB} largely outperform \texttt{SDPBB}.
Indeed, \texttt{SDPBB} cannot solve most instances in the time limit, evidenced from the left plot in \cref{fig:kge1}, while \texttt{k-RSDCBB} and \texttt{eigBB} can solve more instances and have  lower relative gaps for unsolved instances in the time limit. In general, \texttt{k-RSDCBB} and \texttt{eigBB} are comparable and {\texttt{eigBB}  is slightly better than \texttt{k-RSDCBB}.}

It remains to comment on the numerical performance of \texttt{1-RSDCBB}.
Experimentally, we observed that the 1-RSDC construction (\cref{alg:1_rsdc}) yields very large condition numbers for the $P$ matrices in \eqref{pb:1sdcqcqp} (e.g., larger than \texttt{1e6}). This leads to inaccurate solutions or numerical failures in MOSEK when solving the SOCP+RLT relaxation, especially for $k\geq 5$. Note also that \texttt{1-RSDCBB} coincides with \texttt{k-RSDCBB} for $k=1$. Thus, we compare the three SOCP-based BB methods, \texttt{1-RSDCBB}, \texttt{k-RSDCBB}, and \texttt{eigBB}, for values of $1<k<5$ in \cref{tab:1}.

\begin{table}
\center
\mathprog{\setlength\tabcolsep{2pt}}
\resizebox{\linewidth}{!}{
\begin{tabular}{c|ccc|ccc|ccc|ccc|cc} \toprule
\multirow{2}{*}{$(n,k)$}
&\multicolumn{3}{c|}{\texttt{1-RSDCBB}}&\multicolumn{3}{c|}{\texttt{k-RSDCBB}}&
\multicolumn{3}{c|}{\texttt{eigBB}}&
\multicolumn{2}{c}{cond num}\\
 &time& node & gap (\%) &time& node & gap (\%)& time& node & gap (\%)& \texttt{1-RSDC}&\texttt{2-RSDC}\\
\hline\midrule
(10,2)&                   5.73& 2830& 0.00&       9.68& 4582& 0.00&        \textbf{3.02}& 1335& 0.01&       5.14e+01&3.79e+00     \\
(10,2)&         \textbf{27.87}& 11462& 0.00&     42.68& 17944& 0.00&      35.49& 13335& 0.00&     2.78e+01&5.09e+00     \\
(10,2)&        30.82& 13764& 0.00&     \textbf{6.52}& 2995& 0.00&       11.11& 4234& 0.00&       4.70e+02&4.54e+00     \\
(10,2)&            2.55& 972& 0.00&        \textbf{0.77}& 331& 0.00&     0.79& 299& 0.00&        4.22e+02&2.25e+00     \\
(10,2)&            15.84& 4423& 0.00&      10.23& 4045& 0.01&       \textbf{4.27}& 1521& 0.01&       1.59e+02&2.37e+00     \\
(10,3)&           2.71& 1264& 0.01&       \textbf{0.45}& 203& 0.01&        0.57& 264& 0.01&         2.29e+02&2.72e+00     \\
(10,3)&           16.67& 6848& 0.00&      \textbf{13.15}& 5899& 0.00&       14.04& 5295& 0.00&      1.89e+02&4.87e+00     \\
(10,3)&            19.55& 8176& 0.01&      40.75& 17257& 0.01&      \textbf{10.04}& 4056& 0.00&      5.36e+01&3.42e+00     \\
(10,3)&            1.91& 789& 0.00&        0.08& 29& 0.01&         \textbf{0.06}& 19& 0.00&          1.68e+03&2.24e+00     \\
(10,3)&             54.33& 20000& 0.01&     2.36& 1080& 0.01&     \textbf{1.06}& 402& 0.01&        2.28e+03&1.44e+01     \\
(10,4)&             259.95& 69602& 0.01&    11.95& 5289& 0.01&       \textbf{1.97}& 879& 0.01&        4.37e+03&3.31e+00     \\
(10,4)&            1800.05& 147765& 23.56&         7.93& 3746& 0.00&        \textbf{3.13}& 1414& 0.00&       1.17e+04&8.04e+00     \\
(10,4)&       46.22& 19976& 0.01&     74.85& 32075& 0.01&     \textbf{16.55}& 7295& 0.01&       3.63e+02&7.57e+01     \\
(10,4)&         1800.08& 130796& 158.72&        5.81& 2381& 0.01&     \textbf{4.61}& 1858& 0.00&       2.10e+04&6.55e+00     \\
(10,4)&      \cbl   77.54& 16565& \cbl 1.61&     50.20& 15150& 0.01&     \textbf{3.71}& 1427& 0.01&        2.73e+06&2.49e+01     \\
 \midrule
 (20,3)&           1800.07& 120343& 169.36&        193.58& 36815& 0.01&     \textbf{126.92}& 25152& 0.01&    3.64e+05&3.20e+01     \\
(20,3)&         1800.05& 107481& 216.00&        1800.05& 150828& 22.65&          1800.04& 156611& \textbf{8.94}&          8.99e+03&1.44e+01     \\
(20,3)&           1800.05& 162012& 49.30&         \textbf{790.61}& 166079& 0.00&    1800.05& 156891& 13.02&         2.35e+02&9.71e+00     \\
(20,3)&   1800.07& 115944& 331.43&        1800.07& 156808& \textbf{20.78}&          1800.07& 133551& 106.07&        1.06e+03&3.76e+00     \\
(20,3)&       6.74& 1866& 0.01&       \textbf{2.32}& 643& 0.01&        3.02& 650& 0.01&         6.00e+02&1.01e+01     \\
\midrule
(30,4)&           1800.08& 102100& 100.73&        1800.08& 116527& \textbf{25.97}&          1800.07& 103676& 42.39&         2.85e+03&5.81e+00     \\
(30,4)&          1800.06& 117590& 205.94&        1800.05& 138837& \textbf{34.78}&          1800.07& 113383& 44.58&         1.26e+04&6.50e+00     \\
(30,4)&           1800.07& 95644& 838.24&         1800.04& 145488& 6.80&   \textbf{1345.35}& 136907& 0.01&          2.27e+05&1.41e+01     \\
(30,4)&           1800.03& 110507& 1463.26&       1800.08& 99003& 130.64&          1800.08& 101895& \textbf{75.89}&         2.57e+05&6.43e+00     \\
(30,4)&     \cbl   66.06& 5380& \cbl 0.02&      \textbf{2.05}& 291& 0.01&        3.19& 241& 0.01&         1.15e+05&1.04e+01     \\
\bottomrule
\end{tabular}}
\caption{Comparison of different SOCP-based BB methods for $1<k<5$. In each row, the solution method with the lowest solution time is highlighted. For instances where all three methods time out (1800 seconds) before reaching optimality, the solution method with the lowest objective value is highlighted. Two outliers are highlighted in blue.}
\label{tab:1}
\end{table}

One may observe that, for $n=10$,  \texttt{1-RSDCBB} seems to perform worse (compared to \texttt{1-RSDCBB} and \texttt{eigBB}) as $k$ increases.
This trend can be explained by observing that the condition numbers of the $P$ matrices for \eqref{pb:1sdcqcqp} are likely to ``blow up'' as $k$ increases (see the two rightmost columns of \cref{tab:1}). In particular, we observed that the lower and upper bounds that we computed for the decision variables (i.e., the values of $\ell$ and $u$ at the root node) in \texttt{k-RSDCBB} and \texttt{eigBB} were relatively small intervals, while the corresponding bounds for those in \texttt{1-RSDCBB} were often much larger (e.g., on the order of 1000 times larger for $k = 3$).
Comparing the rightmost two columns of \cref{tab:1}, we see that the condition numbers of the invertible matrices $P$ that we construct are often much smaller for \texttt{k-RSDCBB} than for \texttt{1-RSDCBB}, especially as $k$ gets larger. We believe this explains why \texttt{k-RSDCBB} generally outperforms \texttt{1-RSDCBB} for larger values of the parameter $k$.
Finally, we observe that for the last instances in (10,4) and (30,4), \texttt{1-RSDCBB} returned solutions without reaching the prescribed gap or CPU times.
We believe that this was caused in both instances by numerical inaccuracies within the interior point solves in MOSEK due to the large condition numbers, i.e., \texttt{2.73e6} and \texttt{1.15e5}.
For $k\geq 5$, the condition number of  \texttt{1-RSDCBB} is even worse and  \texttt{1-RSDCBB} fails for almost all instances (not reported here).

\section*{Acknowledgments}
The authors would like to thank Kevin Pratt for offering an idea which helped to simplify the proof of \cref{lem:toeplitz_mapping}. The authors would like to thank the review team for helpful suggestions to improve the quality of this paper. The second author is supported in part by NSFC 12171100 and the Major Program of NFSC (72394360,72394364).


\appendix

\section{Proof of Propositions \ref{prop:sdc_characterization_invertible} and \ref{prop:sdc_characterization}}
\label{sec:proof_sdc_characterization}

\sdccharacterizationinvertible*
\begin{proof}
$(\Rightarrow)$ Let $P\in\R^{n\by n}$ furnished by SDC. For $A\in\cA$, note that
\begin{align*}
P^{-1}S^{-1}AP &= (P^\intercal SP)^{-1} (P^\intercal AP).
\end{align*}
Then, as $P^\intercal SP$ and $P^\intercal AP$ are both diagonal matrices with real entries, we deduce that $S^{-1}A$ is diagonalizable with real eigenvalues. The fact that $S^{-1}\cA$ is a set of commuting matrices follows similarly.

$(\Leftarrow)$ Recall that a commuting set of diagonalizable matrices can be simultaneously diagonalized via a similarity transformation, i.e., there exists an invertible $P\in\R^{n\by n}$ such that $P^{-1}S^{-1}AP$ is diagonal for each $A\in\cA$ \cite{horn2012matrix}. The diagonal entries of $P^{-1}S^{-1}AP$ are furthermore real by the assumption that $S^{-1}A$ has a real spectrum. For each $A\in\cA$, define
\begin{align*}
\bar A \coloneqq P^\intercal AP,\qquad
D_A \coloneqq P^{-1}S^{-1}AP.
\end{align*}
Next, note that the identity $P^{-1}S^{-1}AP = (P^\intercal SP)^{-1}(P^\intercal AP)$ can be expressed as $D_A = \bar S^{-1}\bar A$. Or, equivalently, $\bar S D_A = \bar A$ for all $A\in\cA$. For $i,j\in[n]$, we have the identity
\begin{align*}
\bar S_{i,j} (D_A)_{j,j} = \bar A_{i,j} = \bar A_{j,i} = \bar S_{j,i} (D_A)_{i,i} = \bar S_{i,j} (D_A)_{i,i}.
\end{align*}
Here, we have used that $\bar S$ and $\bar A$ are symmetric and $D_A$ is real diagonal.
In particular, if there exists some $A\in\cA$ such that $(D_A)_{i,i}\neq (D_A)_{j,j}$, then $\bar S_{i,j} = \bar A_{i,j} = 0$. Furthermore, by the relation $\bar SD_B = \bar B$, we also have that $\bar B_{i,j} = 0$ for all other $B\in\cA$.

We conclude that by permuting the columns of $P$ if necessary (so that $[n]$ is grouped according to the equivalence relation: $i\sim j$ if and only if $(D_A)_{i,i} = (D_A)_{j,j}$ for all $A\in\cA$), we can write $\bar S$ as a block diagonal matrix $\bar S=\Diag(S^{(1)},\dots,S^{(k)})$. Furthermore, for every $A\in\cA$, there exists $\lambda_1,\dots,\lambda_k\in\R$ such that $\bar A = \Diag(\lambda_1 S^{(1)},\dots, \lambda_k S^{(k)})$. It remains to note that each block $S^{(i)}$ can be diagonalized separately.
\end{proof}

\sdccharacterization*
\begin{proof}
It suffices to show that if $\cA$ is SDC then $\range(A)\subseteq\range(S)$ for every $A\in\cA$ as then applying \cref{lem:sdc_closed_under_padding} completes the proof.

Let $r =\rank(S)$.
Let $P\in\R^{n\by n}$ furnished by SDC. Note that by permuting the columns of $P$ if necessary, we may assume that $P^\intercal SP$ is a diagonal matrix with support contained in its first $r$-many diagonal entries. As $S$ is a max-rank element of $\spann(\cA)$, we similarly have that for every $A\in\cA$, the matrix $P^\intercal AP$ is a diagonal matrix with support contained in its first $r$-many diagonal entries. For $A\in\cA$, write $P^\intercal AP =\Diag(\bar A, 0_{(n-r)\times(n-r)})$ where $\bar A$ is a diagonal $r\by r$ matrix. Then,
\begin{align*}
\range(A) &= \range(P^{-\intercal}P^\intercal APP^{-1}) \subseteq \spann\set{q_1,\dots,q_r}.
\end{align*}
Here, $q_i\in\R^n$ is the $i$th column of $P^{-\intercal}$. On the other hand, as $\bar S$ has full rank, $\range(S) = \spann\set{q_1,\dots,q_r}$.
\end{proof}

\section{Facts about matrices with upper triangular Toeplitz blocks}
\label{sec:upper_tri_toeplitz}

\begin{lemma}
\label{lem:toeplitz_main_diagonals}
Let $(n_1,\dots, n_k)$ with $\sum_i n_i = n$.
Suppose $T\in\T$. Then, the characteristic polynomial of $T$ depends only on the entries $\set{t_{i,j}^{(1)}:\, n_i = n_j}$.
\end{lemma}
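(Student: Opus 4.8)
The plan is to compute the characteristic polynomial $\det(\xi I_n - T)$ by a careful analysis of which entries of $T$ can possibly contribute. Fix the partition $(n_1,\dots,n_k)$ and write $T = (T_{i,j})$ in block form, where each $T_{i,j}$ is an upper triangular Toeplitz matrix of the appropriate rectangular shape. I would first record the key structural observation: if $n_i < n_j$, then every entry of $T_{i,j}$ lies strictly above the ``shifted'' main diagonal in the sense that $(T_{i,j})_{a,b} = 0$ unless $b - a \geq n_j - n_i > 0$; dually, if $n_i > n_j$, then $(T_{i,j})_{a,b}=0$ unless $b - a \geq 0$, and the diagonal band $b=a$ is governed by $t^{(1)}_{i,j}$ only when it is actually present, i.e.\ only when $n_i = n_j$. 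The cleanest way to make this precise is to index the rows and columns of $T$ globally by pairs $(i,a)$ with $i\in[k]$ and $a\in[n_i]$, and to define a ``weight'' $w(i,a)$ on each index (for instance $w(i,a) = a$ when the block is top-justified, or a shifted version accounting for the $0_{n_i\times(n_j-n_i)}$ padding) so that $(T)_{(i,a),(j,b)} \neq 0$ forces $w(j,b) \geq w(i,a)$, with equality exactly along the shifted main diagonals, and moreover $w(i,a) = w(j,b)$ with $(i,a)\neq(j,b)$ is impossible within a single block but possible across blocks of equal size.

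Next I would expand $\det(\xi I_n - T) = \sum_{\pi} \operatorname{sgn}(\pi) \prod_{\nu}(\xi I_n - T)_{\nu, \pi(\nu)}$ over permutations $\pi$ of the global index set, and argue that a permutation contributes a nonzero term only if it preserves the weight function $w$ in aggregate: summing the inequality $w(\pi(\nu)) \geq w(\nu)$ over all indices $\nu$ and using that $\pi$ is a bijection forces $w(\pi(\nu)) = w(\nu)$ for every $\nu$ in any surviving term. By the structural observation, $w$-preserving index pairs are exactly the diagonal entries $(\xi - t^{(1)}_{i,i})$ together with off-diagonal entries that move within the union of blocks of a single common size $n_i = n_j$ and stay on the shifted main diagonal, i.e.\ entries equal (up to sign) to some $t^{(1)}_{i,j}$ with $n_i = n_j$. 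Hence every monomial appearing in $\det(\xi I_n - T)$ is a product of factors drawn only from $\xi$ and the entries $\{t^{(1)}_{i,j} : n_i = n_j\}$, which is exactly the claim.

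The main obstacle I anticipate is setting up the weight function $w$ cleanly enough that the ``$w$-preserving $\Rightarrow$ on the shifted main diagonal'' step is transparent in both the $n_i \leq n_j$ and $n_i \geq n_j$ cases of the rectangular Toeplitz blocks, and correctly handling the zero-padding so that the relevant diagonal band of block $(i,j)$ is nonzero if and only if $n_i = n_j$. Once that bookkeeping is in place, the permutation-expansion argument is routine. An alternative, slightly slicker route I would consider is to exhibit an explicit permutation similarity (a simultaneous reordering of rows and columns sorting the global indices by $w$, breaking ties by block) bringing $T$ to block upper triangular form whose diagonal blocks collect precisely the equal-size-block data; then $\det(\xi I_n - T)$ factors through those diagonal blocks, and only the entries $\{t^{(1)}_{i,j}: n_i=n_j\}$ survive. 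I would likely present whichever of these two is shorter after working out the indexing details.
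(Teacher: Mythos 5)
Your overall strategy---define a scalar weight on the $n$ global indices so that every nonzero entry of $T$ connects an index to one of weakly larger (or weakly smaller) weight, then use the telescoping identity $\sum_\nu \bigl(w(\pi(\nu)) - w(\nu)\bigr) = 0$ to force weight-preservation for any permutation surviving the Leibniz expansion of $\det(\xi I_n - T)$---is exactly the paper's approach. But your ``key structural observation'' contains a concrete error that propagates into the rest of the argument, and the weight you reach for first does not close the resulting gap.

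When $n_i > n_j$, the block $T_{i,j}$ is $n_i\times n_j$ and consists of an $n_j\times n_j$ upper triangular Toeplitz square \emph{on top of} a $0_{(n_i-n_j)\times n_j}$ padding. Its diagonal band $b=a$ is therefore present for every $a\in[n_j]$ and carries the value $t^{(1)}_{i,j}$---contrary to your claim that this band ``is actually present \dots only when $n_i = n_j$.'' Consequently your unshifted weight $w(i,a)=a$ fails the desideratum you state for it: for $n_i>n_j$ the entry $T_{(i,a),(j,a)}=t^{(1)}_{i,j}$ is $w$-preserving even though $n_i\neq n_j$, so the inference ``$w$-preserving $\Rightarrow$ only $t^{(1)}_{i,j}$ with $n_i=n_j$'' is false. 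The telescoping step with this weight only gives that the characteristic polynomial depends on the larger set $\set{t^{(1)}_{i,j}:\, n_i\geq n_j}$, which is strictly weaker than the lemma.

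The paper uses the weight $w_a = \ell_a - \tfrac{n_{i_a}}{2}$ (here $i_a$ is the block containing global index $a$ and $\ell_a$ is its position within that block), i.e.\ a shift by \emph{half the block size}, not by the padding width as you suggest. This symmetrizes the two off-size regimes: whether $n_{i_a}<n_{i_b}$ or $n_{i_a}>n_{i_b}$, any nonzero $T_{a,b}$ forces $\abs{w_a - w_b}\geq \tfrac{1}{2}\abs{n_{i_a}-n_{i_b}}$, so $w_a=w_b$ together with $T_{a,b}\neq 0$ implies $n_{i_a}=n_{i_b}$ \emph{and} $\ell_a=\ell_b$ in a single step. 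Everything else is the telescoping sum you already describe. If you insist on the unshifted $w(i,a)=a$, you can still finish by a separate cycle argument: within a fixed level $a$, each cycle of a $w$-preserving permutation visits a sequence of blocks whose sizes are weakly decreasing around a closed loop, hence are all equal; but that is an extra step that the half-size shift renders unnecessary. Likewise, your alternative ``reorder and block-triangularize'' route works with the half-size shift, but with the unshifted weight the resulting diagonal blocks still contain the offending $t^{(1)}_{i,j}$ entries with $n_i>n_j$, and you would need a second sort (by block size) to discard them.
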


\begin{proof}
In this proof, we will use $a,b\in[n]$ to index entries in $T$ (specifically, $T_{a,b}\in\R$ is a scalar, not a matrix block).
For each $a\in[n]$, let $i_a\in[k]$ denote the block containing $a$, and let $\ell_a\in[n_k]$ denote the position of $a$ within block $i_a$. By the assumption that $T\in\T$, we have
\begin{align*}
T_{a,b}\neq 0 \implies \min\set{n_{i_a},\,n_{i_b}} - n_{i_a} + (\ell_a - \ell_b) \geq 0.
\end{align*}

Now, for each $a\in[n]$, assign the weight $w_a \coloneqq \ell_a - \frac{n_{i_a}}{2}$. Note that by construction, if $T_{a,b}\neq 0$, then
\begin{align*}
w_a - w_b &= \frac{n_{i_b}}{2} - \frac{n_{i_a}}{2}+ (\ell_a -\ell_b)\geq 0.
\end{align*}
Furthermore, note that if $T_{a,b}\neq 0$ and $w_a-w_b=0$, then $n_{i_a} = n_{i_b}$ and $\ell_a = \ell_b$.

Next, consider a permutation $\sigma\in S_n$ such that $\prod_{a=1}^n T_{a,\sigma(a)}\neq 0$. Note that
\begin{align*}
\sum_{a=1}^n w_a - w_{\sigma(a)}  &= \sum_{a=1}^n w_a - \sum_{a=1}^n w_{\sigma(a)} = 0.
\end{align*}
Then, by the above paragraph, we conclude that $\sigma$ satisfies $n_{i_a} = n_{i_{\sigma(a)}}$ and $\ell_a = \ell_\sigma(a)$ for all $a\in[n]$.

Returning to the previous notation, the characteristic polynomial of $T$ depends only on the entries $\set{t_{i,j}^{(1)}:\, n_i = n_j}$.
\end{proof}

\toeplitzmapping*
\begin{proof}
Without loss of generality, suppose $n_1\leq \dots\leq n_k$ and let $T\in\T$. By \cref{lem:toeplitz_main_diagonals}, $T$ has the same eigenvalues as the matrix $\hat T\in\T$ with entries
\begin{align*}
\hat T_{i,j}^{(\ell)} &= \begin{cases}
	T_{i,j}^{(\ell)} & \text{if } n_i = n_j,\, \ell = 1,\\
	0 & \text{else}.
\end{cases}
\end{align*}
Now, suppose that there are $m$ distinct block sizes $s_1,\dots, s_m$. Partitioning both $\Pi(T)$ and $\hat T$ according to $s_1,\dots, s_m$, we have that
\begin{align*}
\Pi(T) = \Diag(\tilde T_1, \dots, \tilde T_m)
\quad\text{and}\quad
\bar T = \Diag(\tilde T_1 \otimes I_{s_1},\dots, \tilde T_m \otimes I_{s_m}).
\end{align*}
We conclude that $\Pi(T)$ and $\bar T$ have the same eigenvalues.
\end{proof}
\section{Details for the Hermitian case}
\label{sec:hermitian_proofs}

Let $\H^n$ denote the real vector space of $n\by n$ Hermitian matrices. For $v\in\C^n$ and $A\in\C^{n\by n}$ let $v^*$ and $A^*$ denote the conjugate transpose of $v$ and $A$ respectively.

\subsection{Definitions and theorem statements}
Almost all of our results extend verbatim to the Hermitian setting. For brevity, we only state our more interesting definitions and results as adapted to this setting.

\begin{definition}
A set $\cA\subseteq\H^n$ is \emph{simultaneously diagonalizable via congruence} (SDC) if there exists an invertible $P\in\C^{n\by n}$ such that $P^*AP$ is diagonal for all $A\in\cA$.\mathprog{\qed}
\end{definition}

\begin{definition}
A set $\cA\subseteq\H^n$ is \emph{almost simultaneously diagonalizable via congruence} (ASDC) if there exist sequences $A_i\to A$ for every $A\in\cA$ such that for every $i\in\N$, the set $\set{A_i:\, A\in\cA}$ is SDC.\mathprog{\qed}
\end{definition}

\begin{definition}
A set $\cA\subseteq\H^n$ is \emph{nonsingular} if there exists a nonsingular $A\in\spann(\cA)$. Else, it is \emph{singular}.\mathprog{\qed}
\end{definition}

\begin{definition}
Given a set $\cA\subseteq\H^n$, we will say that $S\in\cA$ is a \textit{max-rank element} of $\spann(\cA)$ if $\rank(S) = \max_{A\in\cA}\rank(A)$.\mathprog{\qed}
\end{definition}

\begin{theorem}
\label{thm:hermitian_asdc_nonsingular_pair}
Let $A,B\in\H^n$ and suppose $A$ is invertible. Then, $\set{A,B}$ is ASDC if and only if $A^{-1}B$ has real eigenvalues.
\end{theorem}

\begin{theorem}
Let $\set{A,B}\subseteq\H^n$. If $\set{A,B}$ is singular, then it is ASDC.
\end{theorem}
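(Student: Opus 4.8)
The plan is to mirror the proof of \cref{thm:asdc_singular} line by line, substituting the canonical form for a real symmetric pair (\cite[Theorem 9.2]{lancaster2005canonical}; see also \cite{uhlig1976canonical}) for \cref{prop:canonical_form}. After reducing to the case where $A$ is a max-rank element of $\spann(\set{A,B})$, this canonical form supplies an invertible $P\in\R^{n\by n}$ under which $P^\intercal AP$ and $P^\intercal BP$ are simultaneously block diagonal with four families of blocks: (i) ``real eigenvalue'' blocks $\sigma_i F_{n_i}$, $\sigma_i(\lambda_i F_{n_i}+G_{n_i})$ with $\lambda_i\in\R$, exactly as in \cref{prop:canonical_form}; (ii) ``complex eigenvalue'' blocks of size $2n_i\by 2n_i$ in which the complex scalars appearing in \eqref{eq:canonical_form_imaginary} are replaced by the real $2\by 2$ representation $\begin{smallpmatrix}\Im(\lambda_i)&\Re(\lambda_i)\\\Re(\lambda_i)&-\Im(\lambda_i)\end{smallpmatrix}$ of $\lambda_i\in\C\setminus\R$ (equivalently, the $F_2$ and $T_i$ blocks appearing in \cref{proc:1rsdc}); (iii) Jordan-at-zero blocks $\begin{smallpmatrix}&&F_{n_i}\\&0&\\F_{n_i}&&\end{smallpmatrix}$, $G_{2n_i+1}$; and (iv) a zero block. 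Since the type-(i) and type-(ii) ``$S$'' blocks are invertible, singularity of $A$ forces at least one block of type (iii) or (iv), which is precisely the source of the extra dimension exploited in the Hermitian proof.

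With this dictionary in place, the three-case structure of \cref{thm:asdc_singular} will go through essentially verbatim. In Cases 1 and 2 I would, as before, perturb the type-(i)/(ii) portion $\bar A$ to be invertible, apply the real analogue \cref{thm:real_asdc_nonsingular_pair} of \cref{thm:asdc_characterization_invertible} together with the real analogue of \cref{lem:perturbed_canonical_form}, and discard the real-eigenvalue blocks, thereby reducing to the case $\bar A=\Diag(F_2,\dots,F_2)$, $\bar B=\Diag(T_1,\dots,T_k)$. One then appends to $A$ a scalar $\epsilon$ (from the zero block, Case 1) or perturbs the central $0$ of a type-(iii) block to $\epsilon$ (Case 2), and appends to $B$ an arrowhead row/column with real parameters $\alpha$ (that is, $2k$ real parameters) and $z\in\R$. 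The characteristic polynomial of $\tilde A_\epsilon^{-1}\tilde B_\epsilon$ is again $\epsilon$-independent and, expanded as in \eqref{eq:case_1_char_poly_xy}, is affine in the arrowhead parameters with coefficients the polynomials $f_i(\xi)=\prod_{j\neq i}(\lambda_j-\xi)(\lambda_j^*-\xi)$, $g_i(\xi)=\xi f_i(\xi)$, and $h(\xi)=\prod_i(\lambda_i-\xi)(\lambda_i^*-\xi)$, so prescribing $2k+1$ distinct real roots $\xi_1,\dots,\xi_{2k+1}$ reduces to solving the real, invertible linear system \eqref{eq:f_matrix_system} — whose solution is real because the $f_i,g_i,h$ have real coefficients and the $\xi_\ell$ are real. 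This is exactly the computation consolidated in \cref{proc:1rsdc}. Case 3 (a single type-(iii) or type-(iv) block) is identical: if it is a zero block then $\set{A,B}$ is already SDC, and otherwise perturbing the central $0$ to $\epsilon$ makes $\tilde A_\epsilon$ invertible with $\tilde A_\epsilon^{-1}B$ nilpotent, so \cref{thm:real_asdc_nonsingular_pair} applies.

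The one place where the real setting genuinely differs is the bookkeeping inside the complex-eigenvalue blocks: over $\R$ these are $2n_i\by 2n_i$ real blocks rather than conjugate pairs of $n_i\by n_i$ complex blocks, and one must check that the appended arrowhead column can be taken so that $\tilde B_\epsilon$ stays \emph{real symmetric} while still yielding the same $\epsilon$-independent characteristic polynomial. I expect this to be the only step requiring real care, but it is routine: parameterizing the appended entries as in the passage surrounding \eqref{eq:case_1_char_poly_xy} (writing $\alpha_i=\tfrac{y_i}{2}-\imag\tfrac{x_i+\Re(\lambda_i)y_i}{2\Im(\lambda_i)}$ so that the two real parameters $x_i,y_i$ enter the characteristic polynomial linearly) makes $\tilde B_\epsilon$ manifestly real symmetric and collapses everything onto the same real linear system \eqref{eq:f_matrix_system} as in the Hermitian proof. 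Since no new ideas are needed, I would present the real symmetric proof as a short reduction to the argument of \cref{thm:asdc_singular}, spelling out only the substitution of block types and the $x_i,y_i$ parameterization above.
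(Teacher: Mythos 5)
Your proposal correctly identifies the right canonical form, the three-case skeleton, and the interpolation idea, and is right that the complex-eigenvalue blocks are the only place the real setting needs care. But you get that one step wrong, and it is a genuine gap rather than bookkeeping. In the Hermitian proof, the $i$th arrowhead entries are $(\sqrt{\epsilon}\,\alpha_i,\sqrt{\epsilon})$ with $\alpha_i\in\C$, and the substitution $\alpha_i=\tfrac{y_i}{2}-\imag\tfrac{x_i+\Re(\lambda_i)y_i}{2\Im(\lambda_i)}$ is a \emph{real-linear} bijection from $\alpha_i\in\C$ to $(x_i,y_i)\in\R^2$; that is why the characteristic polynomial becomes affine in the free parameters and the problem collapses to \eqref{eq:f_matrix_system}. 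You claim the same substitution ``makes $\tilde B_\epsilon$ manifestly real symmetric,'' but that $\alpha_i$ is genuinely complex for generic $x_i,y_i$, so writing it into $\tilde B_\epsilon$ destroys real symmetry.

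In the real case both arrowhead entries must be \emph{free real} scalars $(\sqrt{\epsilon}\,\alpha_i,\sqrt{\epsilon}\,\beta_i)$ with $\alpha_i,\beta_i\in\R$. Computing the block arrowhead determinant in the $2\by2$ block of size $2$, one finds the characteristic polynomial enters through
\begin{align*}
x_i = \Im(\lambda_i)(\beta_i^2-\alpha_i^2)-2\Re(\lambda_i)\alpha_i\beta_i, \qquad y_i = 2\alpha_i\beta_i,
\end{align*}
which is a \emph{quadratic}, not linear, change of variables. Solving the real linear system \eqref{eq:f_matrix_system} still gives the required $(x,y,z)\in\R^{2k+1}$, but one must then separately prove that the map $(\alpha_i,\beta_i)\mapsto(x_i,y_i)$ is surjective onto $\R^2$. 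This is true --- with $z_i=\alpha_i+\imag\beta_i$ one has $y_i=\Im(z_i^2)$ and $x_i=-\Im(\lambda_i z_i^2)$; since $\Im(\lambda_i)\neq 0$ the pair $w\mapsto(\Im(w),\Im(\lambda_i w))$ is a linear isomorphism of $\R^2$, so one picks $w$ accordingly and takes $z_i$ to be a complex square root of $w$ --- but it is a distinct step you have omitted. Everything else in your proposal matches the paper's route; this surjectivity of the quadratic reparameterization is exactly the one non-trivial modification the real case introduces, and it is the thing to spell out.
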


\begin{theorem}
\label{thm:hermitian_asdc_nonsingular_triple}
Let $\set{A,B,C}\subseteq\H^n$ and suppose $A$ is invertible. Then, $\set{A,B,C}$ is ASDC if and only if $\set{A^{-1}B,A^{-1}C}$ are a pair of commuting matrices with real eigenvalues.
\end{theorem}

\begin{definition}
Let $\cA\subseteq\H^n$ and $d\in\N$. We will say that $\cA$ is \emph{$d$-restricted SDC} ($d$-RSDC) if there exist matrices $\bar A\in\H^{n+d}$ containing $A$ as its top-left $n\by n$ principal submatrix for every $A\in\cA$ such that $\set{\bar A:\, A\in\cA}$ is SDC.\mathprog{\qed}
\end{definition}

\begin{theorem}
Let $A,B\in\H^n$.
Then for every $\epsilon>0$, there exist $\tilde A,\tilde B\in\H^n$ such that $\norm{A - \tilde A},\norm{B - \tilde B}\leq \epsilon$ and $\set{\tilde A,\tilde B}$ is $1$-RSDC. Furthermore, if $A$ is invertible and $A^{-1}B$ has simple eigenvalues, then $\set{A,B}$ is itself $1$-RSDC.
\end{theorem}


\begin{theorem}
Let $\set{A=I_n,B,C}\subseteq\H^n$. Then, if $d<\rank([B,C])/2$, the set
\begin{align*}
\set{\begin{pmatrix}
        A&\\
        &0_{d}
\end{pmatrix}, \begin{pmatrix}
        B &\\
        & 0_d
\end{pmatrix}, \begin{pmatrix}
        C &\\
        & 0_d
\end{pmatrix}}
\end{align*}
is not ASDC.
\end{theorem}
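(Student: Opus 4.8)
The plan is to transcribe the proof of \cref{thm:commutator_lower_bound_kernel} into the real symmetric setting. As explained in \cref{rem:real_proofs}, the argument there uses nothing specific to the complex field, so all that is needed is to replace conjugate transposes by transposes and $\H$ by $\S$ throughout; I spell out the steps for completeness.

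First I would argue by contradiction: suppose the displayed padded triple is ASDC. Then for each $\epsilon\in(0,1/2)$ there is an SDC triple $\set{\tilde A,\tilde B,\tilde C}\subseteq\S^{n+d}$ within spectral distance $\epsilon$ of the padded matrices. Perturbing the diagonal entries of the common congruence-diagonalization of $\tilde A$ slightly, we may assume $\tilde A$ is invertible; moreover its leading $n\times n$ block $\tilde A_{1,1}$ is invertible because $\norm{\tilde A_{1,1}-I_n}\le\epsilon<1/2$. I would then conjugate by the invertible real matrix $P=\begin{pmatrix}\tilde A_{1,1}^{-1/2} & -\tilde A_{1,1}^{-1}\tilde A_{1,2}\\ 0 & I_d\end{pmatrix}$, which preserves SDC, normalizes the leading block of $\tilde A$ to $I_n$, and leaves the bottom-right block invertible. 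The bound $\norm{P-I_{n+d}}=O(\epsilon)$ (obtained exactly as in the proof of \cref{thm:commutator_lower_bound_kernel}) then yields $\norm{P^\intercal\tilde BP-B}=O(\epsilon)$ and $\norm{P^\intercal\tilde CP-C}=O(\epsilon)$.

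The conclusion of this bookkeeping is that for every $\delta>0$ there is an SDC triple $\set{\bar A,\bar B,\bar C}\subseteq\S^{n+d}$ with $\bar A=I_n\oplus\bar A_{2,2}$ for some invertible $\bar A_{2,2}$, and with $\set{\bar A,\bar B,\bar C}$ within $\delta$ of the padded matrices. By \cref{prop:sdc_characterization_invertible}, $[\bar A^{-1}\bar B,\bar A^{-1}\bar C]=0$; expanding the top-left $n\times n$ block of this identity gives $[\bar B_{1,1},\bar C_{1,1}]=\bar C_{1,2}\bar A_{2,2}^{-1}\bar B_{1,2}^\intercal-\bar B_{1,2}\bar A_{2,2}^{-1}\bar C_{1,2}^\intercal$, whose right-hand side has rank at most $2d$. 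On the other hand, by lower semicontinuity of rank and the hypothesis $d<\rank([B,C])/2$, we have $\rank([\bar B_{1,1},\bar C_{1,1}])\ge\rank([B,C])>2d$ once $\delta$ is small enough, a contradiction.

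I do not anticipate a genuine obstacle: the only point requiring care is the chain of $O(\epsilon)$ perturbation estimates together with the rank-semicontinuity step, and these go through over $\R$ unchanged, since they use only submultiplicativity of the spectral norm and the symmetry of $\tilde A,\tilde B,\tilde C$. If one wanted a slicker write-up, one could instead simply invoke \cref{rem:real_proofs} and point to \cref{thm:commutator_lower_bound_kernel}.
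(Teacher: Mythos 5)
Your proposal is correct and follows essentially the same route as the paper: the paper itself handles this theorem by invoking \cref{rem:real_proofs} and noting (implicitly, by omission from \cref{sub:real_proofs_modifications}) that the Hermitian proof of \cref{thm:commutator_lower_bound_kernel} transfers verbatim after replacing $^*$ by $^\intercal$ and $\H$ by $\S$. Your transcription carries out exactly that, and the one place you add detail beyond the paper---spelling out why one may take $\tilde A$ invertible via a small perturbation of the diagonalized form---is a sound elaboration of the paper's ``without loss of generality.''
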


\begin{theorem}
There exists a set $\cA = \set{A_1,\dots,A_5}\subseteq\H^4$ such that $A_1$ is invertible, $A_1^{-1}\cA$ is a set of commuting matrices with real eigenvalues, and $\cA$ is not ASDC.
\end{theorem}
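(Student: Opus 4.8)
The plan is to follow the template of \cref{thm:obstruction_m=5_invertible} via the algebra-dimension obstruction, transported to the real symmetric setting. The proofs of \cref{lem:sdc_real_algebra_dimension} and \cref{cor:asdc_real_algebra_dimension} go through verbatim with $\C$ and $\H^n$ replaced by $\R$ and $\S^n$: if $\cA=\set{A_1,\dots,A_m}\subseteq\S^n$ with $A_1$ invertible is ASDC, then $\dim_\R(\R[A_1^{-1}\cA])\le n$. Hence it suffices to produce a set $\cA=\set{A_1,\dots,A_7}\subseteq\S^6$ with $A_1$ invertible, $A_1^{-1}\cA$ a commuting family with real spectra, and $\dim_\R(\R[A_1^{-1}\cA])=7>6$.

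For the construction, fix a basis $E_1,\dots,E_6$ of the $6$-dimensional space $\S^3$ (for instance $e_je_j^\intercal$ for $j\in[3]$ together with $e_je_k^\intercal+e_ke_j^\intercal$ for $1\le j<k\le 3$) and set
\begin{gather*}
A_1=\begin{pmatrix}0_3 & F_3\\ F_3 & 0_3\end{pmatrix},\qquad
A_{i+1}=\begin{pmatrix}0_3 & 0_3\\ 0_3 & E_i\end{pmatrix},\quad i\in[6].
\end{gather*}
Each $A_j$ is real symmetric, and $A_1$ is invertible with $A_1^{-1}=A_1$. The first (routine) step is to observe that $A_1^{-1}A_1=I_6$ and $A_1^{-1}A_{i+1}=\begin{smallpmatrix}0_3 & F_3E_i\\ 0_3 & 0_3\end{smallpmatrix}=:N_i$, and that $N_iN_j=0$ for all $i,j\in[6]$; consequently $A_1^{-1}\cA=\set{I_6,N_1,\dots,N_6}$ is a commuting set whose members each have real spectrum (the single eigenvalue $1$ or $0$).

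The crux is the dimension count. Since every product of elements of $\set{I_6,N_1,\dots,N_6}$ either vanishes or is one of the generators, the real algebra is $\R[A_1^{-1}\cA]=\R I_6\oplus\spann_\R\set{N_1,\dots,N_6}$. Moreover $\spann_\R\set{N_1,\dots,N_6}=\set{\begin{smallpmatrix}0_3 & Z\\ 0_3 & 0_3\end{smallpmatrix}:Z\in F_3\S^3}$, and as $F_3$ is invertible this space has dimension $\dim\S^3=6$; therefore $\dim_\R(\R[A_1^{-1}\cA])=1+6=7$. Since $7>6=n$, the real symmetric form of \cref{cor:asdc_real_algebra_dimension} shows $\cA$ is not ASDC, while the previous step records the remaining asserted properties of $\cA$.

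I expect no real obstacle here; the only point needing care is choosing the block sizes. The governing inequality is $1+\binom{d+1}{2}>2d$ --- obtained by modeling the nilpotent part of the algebra on the self-adjoint $d\times d$ matrices sitting in the off-diagonal block of $\R^{2d\times 2d}$ --- which first holds at $d=3$, giving $n=6$ and $m=1+\binom{4}{2}=7$. This contrasts with the Hermitian case, where the off-diagonal block carries Hermitian $d\times d$ matrices (real dimension $d^2$), the inequality $1+d^2>2d$ already holds at $d=2$, and one recovers $n=4$, $m=5$ as in \cref{thm:obstruction_m=5_invertible}.
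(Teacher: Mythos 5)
Your proposal is correct and follows essentially the same approach as the paper. Your matrix $A_1=\begin{smallpmatrix}0_3 & F_3\\ F_3 & 0_3\end{smallpmatrix}$ is exactly $F_6$, and the matrices $A_2,\dots,A_7$ form the same basis of the copy of $\S^3$ sitting in the bottom-right block that the paper uses, so the construction coincides with the paper's; the dimension count $\dim_\R(\R[A_1^{-1}\cA])=1+\dim\S^3=7>6$ and the appeal to the real-symmetric analogue of \cref{cor:asdc_real_algebra_dimension} are likewise exactly what the paper does (via the blanket remark that \cref{thm:obstruction_m=5_invertible} transfers with the stated replacement matrices).
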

In the Hermitian setting, the statement in \cref{thm:obstruction_m=7_invertible} should be changed to:
``There exists a set $\cA = \set{A_1,\dots,A_5}\subseteq\H^4$ such that $A_1$ is invertible, $A_1^{-1}\cA$ is a set of commuting matrices with real eigenvalues and $\cA$ is not ASDC.'' The proof is unchanged after setting
\begin{gather*}
A_1 = \begin{smallpmatrix}
        &&&1\\
        &&1&\\
        &1&&\\
        1&&&
\end{smallpmatrix},\quad
A_2 = \begin{smallpmatrix}
        0&&&\\
        &0&&\\
        &&1&\\
        &&&0
\end{smallpmatrix},\quad
A_3 = \begin{smallpmatrix}
        0&&&\\
        &0&&\\
        &&0&1\\
        &&1&0
\end{smallpmatrix},\\
A_4 = \begin{smallpmatrix}
        0&&&\\
        &0&&\\
        &&0&\imag\\
        &&-\imag&0
\end{smallpmatrix},\quad
A_5 = \begin{smallpmatrix}
        0&&&\\
        &0&&\\
        &&0&\\
        &&&1
\end{smallpmatrix}.
\end{gather*}
\subsection{Necessary modifications}
\label{sub:hermitian_proofs_modifications}
Next, we discuss technical changes that need to be made to adapt our proofs from the real symmetric setting to the Hermitian setting. For brevity, we only list changes beyond the trivial changes, e.g., replacing $\S^n$ by $\H^n$, $\R^{n\by n}$ by $\C^{n\by n}$, and ${}^\intercal$ by ${}^*$.

\begin{itemize}
\item In the Hermitian version of \cref{prop:canonical_form}, the $m_2$-many blocks corresponding to non-real eigenvalues (previously \eqref{eq:canonical_form_imaginary}) will have the form
\begin{align*}
S_i &= F_{2n_i},\qquad
T_i = F_{n_i} \otimes \begin{pmatrix}
         & \lambda_i^*\\
        \lambda_i &
\end{pmatrix} + G_{n_i} \otimes F_2
\end{align*}
where $n_i \in\N$ and $\lambda_i \in\C\setminus \R$.
See \cite[Theorem 9.2]{lancaster2005canonical} for further details.

\item In the proof of \cref{lem:perturbed_canonical_form}, note that for all $i\in[r+1,m]$, the block
\begin{align*}
S_i^{-1}\tilde T_i = I_{n_i}\otimes \begin{pmatrix}
        \lambda_i&\\&\lambda_i^*
\end{pmatrix} + (\eta_i I_{n_i} + F_{n_i}G_{n_i} + \delta F_{n_i}H_{n_i})\otimes I_2.
\end{align*}
The remainder of the proof is unchanged.

\item In the proof of \cref{thm:asdc_singular}, we will work in the basis furnished by the Hermitian version of \cref{prop:canonical_form} for $\C^{2k}$. That is, we may assume in the first two cases that $A$ and $B$ (previously \eqref{eq:asdc_singular_barA_barB}) have the form
\begin{align*}
 A &= \left(\begin{array}
        {@{}c|c|c|c@{}}
        \begin{smallmatrix}
                &1\\1&
        \end{smallmatrix} && \\\hline
        & \ddots & \\\hline
        && \begin{smallmatrix}
                &1\\1&
        \end{smallmatrix}\\\hline
        &&&S_m
\end{array}\right),\qquad
B = \left(\begin{array}
        {@{}c|c|c|c@{}}
        \begin{smallmatrix}
                &\lambda_1^*\\\lambda_1
        \end{smallmatrix} && \\\hline
        & \ddots & \\\hline
        && \begin{smallmatrix}
                &\lambda_k^*\\\lambda_k
        \end{smallmatrix}\\\hline
        &&&T_m
\end{array}\right).
\end{align*}

We will set $\tilde A_\delta$ as in the Hermitian case for both Cases 1 and 2. We will set $\tilde B_\delta$ to be
\begin{align*}
\tilde B_\delta = \left(\begin{array}
        {@{}c|c|c|c@{}}
        \begin{smallmatrix}
                &\lambda_1^*\\\lambda_1
        \end{smallmatrix} &&&\begin{smallmatrix}
                \alpha_1\sqrt{-\delta\imag/2}\\
                \left(\alpha_1\sqrt{-\delta\imag/2}\right)^*
        \end{smallmatrix}\\\hline
        & \ddots & &\vdots\\\hline
        && \begin{smallmatrix}
                &\lambda_k^*\\\lambda_k
        \end{smallmatrix} & \begin{smallmatrix}
                \alpha_k\sqrt{-\delta\imag/2}\\
                \left(\alpha_k\sqrt{-\delta\imag/2}\right)^*
        \end{smallmatrix}\\\hline
        \begin{smallmatrix}
                \left(\alpha_1\sqrt{\tfrac{-\delta\imag}{2}}\right)^*&
                \alpha_1\sqrt{\tfrac{-\delta\imag}{2}}
        \end{smallmatrix}&\dots&\begin{smallmatrix}
                \left(\alpha_k\sqrt{\tfrac{-\delta\imag}{2}}\right)^*&
                \alpha_k\sqrt{\tfrac{-\delta\imag}{2}}
        \end{smallmatrix}&\delta z
\end{array}\right)
\end{align*}
and
\begin{align*}
\tilde B_\delta = \left(\begin{array}
        {@{}c|c|c|c|c|c@{}}
        \begin{smallmatrix}
                &\lambda_1^*\\\lambda_1
        \end{smallmatrix} &&&& \begin{smallmatrix}
                \alpha_1\sqrt{-\delta\imag/2}\\
                \left(\alpha_1\sqrt{-\delta\imag/2}\right)^*
        \end{smallmatrix}&\\\hline
        & \ddots && &\vdots & \\\hline
        && \begin{smallmatrix}
                &\lambda_k^*\\\lambda_k
        \end{smallmatrix}&&\begin{smallmatrix}
                \alpha_k\sqrt{-\delta\imag/2}\\
                \left(\alpha_k\sqrt{-\delta\imag/2}\right)^*
        \end{smallmatrix}&\\\hline
        &&&&&G_{n_m}\\\hline
        \begin{smallmatrix}
                \left(\alpha_1\sqrt{\tfrac{-\delta\imag}{2}}\right)^*&
                \alpha_1\sqrt{\tfrac{-\delta\imag}{2}}
        \end{smallmatrix}&\cdots&\begin{smallmatrix}
                \left(\alpha_k\sqrt{\tfrac{-\delta\imag}{2}}\right)^*&
                \alpha_k\sqrt{\tfrac{-\delta\imag}{2}}
        \end{smallmatrix}&&\delta z& e_1^\intercal\\\hline
        &&&G_{n_m}&e_1
\end{array}\right)
\end{align*}
for Cases 1 and 2, respectively. Here, $\alpha\in\C^k$, $z\in\R$, and $\delta>0$.
The characteristic polynomials of $\tilde A_\delta^{-1}\tilde B_\delta$ are given by \eqref{eq:case_1_char_poly} and \eqref{eq:case_2_char_poly} in Cases 1 and 2 respectively. The remainder of the proof remains unchanged.
\end{itemize}


\section{An example where the SDC property is preserved under restriction}
\label{sec:hom_to_inhom_SDC}

In this section, we give an example of a setting in which the restriction of an SDC set to one of its principal submatrices results in another SDC set. This setting arises for example in QCQPs~\cite{jiang2016simultaneous}.

\begin{proposition}
Let $A_1,\dots,A_m\in\S^n$ such that $\spann(\set{A_1,\dots,A_m})$ contains a positive definite matrix. Let $b_1,\dots,b_m\in\R^n$ and $c_1,\dots,c_m\in\R$, and define
\begin{align*}
Q_i = \begin{pmatrix}
        A_i & b_i\\
        b_i^\intercal & c_i
\end{pmatrix}\in\S^{n+1}.
\end{align*}
If $\set{Q_1,\dots,Q_m,e_{n+1}e_{n+1}^\intercal}$ is SDC, then so is $\set{A_1,\dots,A_m}$.
\end{proposition}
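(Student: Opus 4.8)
The plan is to use the SDC characterization results from the preliminaries, specifically \cref{prop:sdc_characterization,prop:sdc_characterization_invertible}, together with the hypothesis that $\spann(\set{A_1,\dots,A_m})$ contains a positive definite matrix. First I would observe that since $\spann(\set{A_1,\dots,A_m})$ contains a positive definite matrix $M = \sum_i \mu_i A_i$, the matrix $Q = \sum_i \mu_i Q_i$ has the form $\begin{smallpmatrix} M & b \\ b^* & c\end{smallpmatrix}$ with $M \succ 0$. Adding a large multiple of $e_{n+1}e_{n+1}^*$ (which lies in the span of the SDC set $\set{Q_1,\dots,Q_m,e_{n+1}e_{n+1}^*}$), I can arrange that $Q + t\, e_{n+1}e_{n+1}^* \succ 0$ for $t$ large enough, using the Schur complement condition $c + t - b^* M^{-1} b > 0$. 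Thus $\spann(\set{Q_1,\dots,Q_m,e_{n+1}e_{n+1}^*})$ contains a positive definite matrix $\hat Q$.

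Next, since the augmented set is SDC and contains a positive definite element $\hat Q$, by \cref{lem:sdc_characterization_pd} the set $\hat Q^{-1/2}\set{Q_1,\dots,Q_m,e_{n+1}e_{n+1}^*}\hat Q^{-1/2}$ is a commuting set of symmetric matrices. In particular, each $\hat Q^{-1/2} Q_i \hat Q^{-1/2}$ commutes with $\hat Q^{-1/2} e_{n+1}e_{n+1}^* \hat Q^{-1/2}$, which is a rank-one symmetric matrix $vv^*$ where $v = \hat Q^{-1/2} e_{n+1}$. A symmetric matrix commuting with a rank-one projection (up to scaling) must leave both $\spann(v)$ and $v^\perp$ invariant. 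Working in an orthonormal basis adapted to the splitting $\C^{n+1} = \spann(v) \oplus v^\perp$, every $\hat Q^{-1/2} Q_i \hat Q^{-1/2}$ is block diagonal with a $1\times 1$ block and an $n\times n$ block, and these $n\times n$ blocks form a commuting set of symmetric matrices. Translating back, this shows $\set{Q_1,\dots,Q_m}$ is SDC via a congruence $P$ for which $P^* e_{n+1}e_{n+1}^* P$ is also diagonal; equivalently, there is a common block decomposition of each $Q_i$ into a piece supported on an $n$-dimensional subspace $V$ and a piece supported on the complementary line spanned by (a scalar multiple of) $e_{n+1}$.

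The remaining step is to descend from the SDC-ness of $\set{Q_1,\dots,Q_m}$ (in this structured form) to the SDC-ness of $\set{A_1,\dots,A_m}$. The point is that the restriction of $Q_i$ to the hyperplane $\set{x_{n+1}=0}$ is exactly $A_i$, and the block structure above says the $Q_i$ are simultaneously diagonalized by a basis that includes a vector proportional to $e_{n+1}$; restricting the diagonalizing change of basis to the coordinates orthogonal to $e_{n+1}$ within the image produces a simultaneous congruence diagonalization of the $A_i$. Concretely, I would let $P$ be the invertible matrix with $P^* Q_i P$ diagonal and $P^* e_{n+1}e_{n+1}^* P$ diagonal; since $e_{n+1}e_{n+1}^*$ is rank one, exactly one column $p_{n+1}$ of $P$ satisfies $e_{n+1}^* p_{n+1} \neq 0$ and the other columns $p_j$ satisfy $e_{n+1}^* p_j = 0$, i.e. they lie in $\C^n \times \set 0$. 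Writing $p_j = (\tilde p_j, 0)$ for $j \in [n]$, the matrix $\tilde P = (\tilde p_1 \mid \dots \mid \tilde p_n) \in \C^{n\times n}$ is invertible (the $p_j$ together with $p_{n+1}$ are independent) and $\tilde P^* A_i \tilde P = (P^* Q_i P)_{[n],[n]}$ is diagonal, so $\set{A_1,\dots,A_m}$ is SDC.

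The main obstacle I anticipate is the bookkeeping in the last step: verifying carefully that exactly one column of the diagonalizing matrix $P$ has a nonzero last coordinate (this uses that $P^* e_{n+1}e_{n+1}^* P$ is diagonal and has rank one, so $|e_{n+1}^* p_j|^2$ is the $j$th diagonal entry and is nonzero for exactly one index), and that deleting that column and the last coordinate yields an invertible $n\times n$ matrix that congruence-diagonalizes the $A_i$. Everything else — producing a positive definite element of the augmented span via a Schur complement, and invoking \cref{lem:sdc_characterization_pd} to get commutativity — is routine given the tools already established in the paper. One should double check that the positive-definiteness hypothesis on $\spann(\set{A_i})$ is genuinely needed here (it is: without it, the restriction of an SDC set need not be SDC, as \cref{lem:sdc_closed_under_padding} and the singular examples in \cref{sec:asdc_pairs} illustrate), which confirms the statement is not vacuous or trivially strengthenable.
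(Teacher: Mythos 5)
Your proof is correct, but it takes a genuinely different route from the paper's.  The paper argues by perturbation: writing $S_\lambda = Q_1 + \lambda e_{n+1}e_{n+1}^*$ with $A_1\succ 0$, it uses the block inverse formula to show $S_\lambda^{-1}\to \begin{smallpmatrix}A_1^{-1} & 0\\ 0&0\end{smallpmatrix}$ as $\lambda\to\infty$, applies \cref{lem:sdc_characterization_pd} to get $[S_\lambda^{-1}Q_i, S_\lambda^{-1}Q_j]=0$ for all $\lambda$ large, and passes to the limit to conclude $[A_1^{-1}A_i,A_1^{-1}A_j]=0$. Your argument instead works directly with the diagonalizing congruence $P$: because $P^*e_{n+1}e_{n+1}^*P = (e_{n+1}^*P)^*(e_{n+1}^*P)$ is diagonal and rank one, exactly one column of $P$ has nonzero last coordinate, so the remaining $n$ columns sit in $\C^n\times\{0\}$, are linearly independent, and form a congruence $\tilde P$ with $\tilde P^*A_i\tilde P = (P^*Q_iP)_{[n],[n]}$ diagonal.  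This is cleaner in the sense that it requires no limit and no use of \cref{lem:sdc_characterization_pd} at all.

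Two remarks. First, your steps using $\hat Q^{-1/2}$ (producing a positive definite element of the augmented span, extracting the commuting family, splitting along $\spann(v)\oplus v^\perp$) are a detour: since $\set{Q_1,\dots,Q_m,e_{n+1}e_{n+1}^*}$ is given to be SDC, the diagonalizing $P$ exists by hypothesis with $P^*e_{n+1}e_{n+1}^*P$ diagonal, and your last paragraph already constitutes a complete proof.  Second, and more importantly, your closing claim that the positive-definiteness hypothesis on $\spann(\set{A_i})$ is ``genuinely needed'' is not supported by your own argument and is in fact wrong: nothing in the column-deletion argument uses it, so you have actually proved the proposition without that hypothesis.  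The restriction phenomenon you point to (\cref{lem:sdc_closed_under_padding}, singular pairs) concerns restriction to the joint range, which is a different operation from taking the top-left $n\times n$ block when the bordered set together with $e_{n+1}e_{n+1}^*$ is SDC. The positive-definiteness assumption in the proposition is what the paper's limit argument uses (it needs $A_1^{-1}$) and what matches the intended QCQP context, but it is not what makes the statement true.
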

\begin{proof}
Without loss of generality, let $A_1\succ 0$.
Note that for all $\lambda\in\R$ large enough, the matrix $S_\lambda \coloneqq Q_1 + \lambda e_{n+1}e_{n+1}^\intercal\succ 0$. By the inverse formula for a block matrix \cite{horn2012matrix}, we have that for all $\lambda$ large enough,
\begin{align*}
S_\lambda^{-1} &= \begin{pmatrix}
        A_1^{-1} + \frac{A_1^{-1}b_1b_1^\intercal A_1^{-1}}{\lambda + (c_1 - b_1^\intercal A_1b_1)} & \frac{-A_1^{-1}b_1}{\lambda + (c_1 - b_1A_1^{-1}b_1)}\\
        \frac{-b_1^\intercal A_1^{-1}}{\lambda + (c_1 - b_1A_1^{-1}b_1)} & \frac{1}{\lambda + (c_1 - b_1A_1^{-1}b_1)}
\end{pmatrix}.
\end{align*}
In particular,
\begin{align*}
\lim_{\lambda\to\infty}S_\lambda^{-1} = \begin{pmatrix}
        A_1^{-1} & \\
        &0
\end{pmatrix}.
\end{align*}
On the other hand, by \cref{lem:sdc_characterization_pd}, we have that for all $i,j\in[m]$,
\begin{align*}
0 &= \left[S_\lambda^{-1}Q_i,\, S_\lambda^{-1} Q_j\right].
\end{align*}
Finally, by continuity we have that
\begin{align*}
0 = \lim_{\lambda\to \infty} \left[S_\lambda^{-1}Q_i,\, S_\lambda^{-1} Q_j\right] = \begin{pmatrix}
        \left[A_1^{-1}A_i,\,A_1^{-1}A_j\right] &\\&0
\end{pmatrix}.
\end{align*}
We conclude that $A_1^{-1}\set{A_1,\dots,A_m}$ commute, whence by \cref{lem:sdc_characterization_pd} this set is SDC.
\end{proof}

\end{document}